\begin{document}

\theoremstyle{plain} \newtheorem{theorem}{Theorem}[section]
\theoremstyle{plain} \newtheorem{lemma}[theorem]{Lemma}
\theoremstyle{plain} \newtheorem{proposition}[theorem]{Proposition}
\newtheorem{axioms}[theorem]{Axioms}
\newtheorem{corollary}[theorem]{Corollary}
\newtheorem{problem}{Problem}
\newtheorem{subproblem}[problem]{Subproblem}
\newtheorem{conjecture}[theorem]{Conjecture}
\newtheorem{conjecture*}[]{Conjecture}
\newtheorem{matheorem}[theorem]{Main Theorem}
\newtheorem{claim}[problem]{Claim}

\newcommand{\nr}{\refstepcounter{theorem}  
                   \noindent {\thetheorem .}}
\newcommand{\defi}{\medskip \noindent {\it Definition \nr} }
\newcommand{\defifin}{\medskip}
\newcommand{\eks}{\medskip \noindent {\it Example \nr} }
\newcommand{\eksfin}{\medskip}
\newcommand{\rem}{\medskip \noindent {\it Remark \nr} }
\newcommand{\remfin}{\medskip}
\newcommand{\obs}{\medskip \noindent {\it Observation \nr} }
\newcommand{\obsfin}{\medskip}
\newcommand{\note}{\medskip \noindent {\it Notation \nr} }
\newcommand{\notefin}{\medskip}

\newcommand{\llabel}{\addtocounter{theorem}{-1}
\refstepcounter{theorem} \label}

\newcommand{\psp}[1]{{{\bf P}^{#1}}}
\newcommand{\psr}[1]{{\bf P}(#1)}
\newcommand{\op}{{\mathcal O}}
\newcommand{\opw}{\op_{\psr{W}}}
\newcommand{\go}{\op}

\newcommand{\ini}[1]{\text{in}(#1)}
\newcommand{\gin}[1]{\text{gin}(#1)}
\newcommand{\kr}{{\Bbbk}}
\newcommand{\pd}{\partial}
\renewcommand{\tt}{{\bf t}}


\newcommand{\coh}{{{\text{{\rm coh}}}}}


\newcommand{\modv}[1]{{#1}\text{-{mod}}}
\newcommand{\modstab}[1]{{#1}-\underline{\text{mod}}}

\newcommand{\sut}{{}^{\tau}}
\newcommand{\sumit}{{}^{-\tau}}
\newcommand{\til}{\thicksim}

\newcommand{\totp}{\text{Tot}^{\prod}}
\newcommand{\dsum}{\bigoplus}
\newcommand{\dprod}{\prod}
\newcommand{\lsum}{\oplus}
\newcommand{\lprod}{\Pi}

\newcommand{\La}{{\Lambda}}

\newcommand{\sirstj}{\circledast}

\newcommand{\she}{\EuScript{S}\text{h}}
\newcommand{\cm}{\EuScript{CM}}
\newcommand{\cmd}{\EuScript{CM}^\dagger}
\newcommand{\cmri}{\EuScript{CM}^\circ}
\newcommand{\cler}{\EuScript{CL}}
\newcommand{\clerd}{\EuScript{CL}^\dagger}
\newcommand{\clerri}{\EuScript{CL}^\circ}
\newcommand{\gor}{\EuScript{G}}
\newcommand{\gF}{\mathcal{F}}
\newcommand{\gG}{\mathcal{G}}
\newcommand{\gM}{\mathcal{M}}
\newcommand{\gE}{\mathcal{E}}
\newcommand{\gD}{\mathcal{D}}
\newcommand{\gI}{\mathcal{I}}
\newcommand{\gP}{\mathcal{P}}
\newcommand{\gK}{\mathcal{K}}
\newcommand{\gL}{\mathcal{L}}
\newcommand{\gS}{\mathcal{S}}
\newcommand{\gC}{\mathcal{C}}
\newcommand{\gO}{\mathcal{O}}
\newcommand{\gJ}{\mathcal{J}}

\newcommand{\dlim} {\varinjlim}
\newcommand{\ilim} {\varprojlim}

\newcommand{\CM}{\text{CM}}
\newcommand{\Mon}{\text{Mon}}


\newcommand{\Kom}{\text{Kom}}


\newcommand{\EH}{{\mathbf H}}
\newcommand{\res}{\text{res}}
\newcommand{\Hom}{\text{Hom}}
\newcommand{\inhom}{{\underline{\text{Hom}}}}
\newcommand{\Ext}{\text{Ext}}
\newcommand{\Tor}{\text{Tor}}
\newcommand{\ghom}{\mathcal{H}om}
\newcommand{\gext}{\mathcal{E}xt}
\newcommand{\id}{\text{{id}}}
\newcommand{\im}{\text{im}\,}
\newcommand{\codim} {\text{codim}\,}
\newcommand{\resol}{\text{resol}\,}
\newcommand{\rank}{\text{rank}\,}
\newcommand{\lpd}{\text{lpd}\,}
\newcommand{\coker}{\text{coker}\,}
\newcommand{\supp}{\text{supp}\,}


\newcommand{\sus}{\subseteq}
\newcommand{\sups}{\supseteq}
\newcommand{\pil}{\rightarrow}
\newcommand{\vpil}{\leftarrow}
\newcommand{\rpil}{\leftarrow}
\newcommand{\lpil}{\longrightarrow}
\newcommand{\inpil}{\hookrightarrow}
\newcommand{\pils}{\twoheadrightarrow}
\newcommand{\projpil}{\dashrightarrow}
\newcommand{\dotpil}{\dashrightarrow}
\newcommand{\adj}[2]{\overset{#1}{\underset{#2}{\rightleftarrows}}}
\newcommand{\mto}[1]{\stackrel{#1}\longrightarrow}
\newcommand{\vmto}[1]{\stackrel{#1}\longleftarrow}
\newcommand{\eqv}{\Leftrightarrow}
\newcommand{\impl}{\Rightarrow}

\newcommand{\iso}{\cong}
\newcommand{\te}{\otimes}
\newcommand{\into}[1]{\hookrightarrow{#1}}
\newcommand{\ekv}{\Leftrightarrow}
\newcommand{\equi}{\simeq}
\newcommand{\isopil}{\overset{\cong}{\lpil}}
\newcommand{\equipil}{\overset{\equi}{\lpil}}
\newcommand{\ispil}{\isopil}
\newcommand{\vvi}{\langle}
\newcommand{\hvi}{\rangle}
\newcommand{\susneq}{\subsetneq}


\newcommand{\xd}{\check{x}}
\newcommand{\ortog}{\bot}
\newcommand{\tL}{\tilde{L}}
\newcommand{\tM}{\tilde{M}}
\newcommand{\tH}{\tilde{H}}
\newcommand{\tvH}{\widetilde{H}}
\newcommand{\tvh}{\widetilde{h}}
\newcommand{\tV}{\tilde{V}}
\newcommand{\tS}{\tilde{S}}
\newcommand{\tT}{\tilde{T}}
\newcommand{\tR}{\tilde{R}}
\newcommand{\tf}{\tilde{f}}
\newcommand{\ts}{\tilde{s}}
\newcommand{\tp}{\tilde{p}}
\newcommand{\tr}{\tilde{r}}
\newcommand{\tfst}{\tilde{f}_*}
\newcommand{\empt}{\emptyset}
\newcommand{\bfa}{{\bf a}}
\newcommand{\la}{\lambda}

\newcommand{\ome}{\omega_E}

\newcommand{\bevis}{{\bf Proof. }}
\newcommand{\demofin}{\qed \vskip 3.5mm}
\newcommand{\nyp}[1]{\noindent {\bf (#1)}}
\newcommand{\demo}{{\it Proof. }}
\newcommand{\demodone}{\demofin}
\newcommand{\parg}{{\vskip 2mm \addtocounter{theorem}{1}  
                   \noindent {\bf \thetheorem .} \hskip 1.5mm }}

\newcommand{\red}{{\text{red}}}
\newcommand{\lcm}{{\text{lcm}}}


\newcommand{\dl}{\Delta}
\newcommand{\cdel}{{C\Delta}}
\newcommand{\cdelp}{{C\Delta^{\prime}}}
\newcommand{\dlst}{\Delta^*}
\newcommand{\Sdl}{{\mathcal S}_{\dl}}
\newcommand{\lk}{\text{lk}}
\newcommand{\lkd}{\lk_\Delta}
\newcommand{\lkp}[2]{\lk_{#1} {#2}}
\newcommand{\del}{\Delta}
\newcommand{\delr}{\Delta_{-R}}
\newcommand{\dd}{{\dim \del}}

\renewcommand{\aa}{{\bf a}}
\newcommand{\bb}{{\bf b}}
\newcommand{\cc}{{\bf c}}
\newcommand{\xx}{{\bf x}}
\newcommand{\yy}{{\bf y}}
\newcommand{\zz}{{\bf z}}

\newcommand{\pnm}{{\bf P}^{n-1}}
\newcommand{\opnm}{{\go_{\pnm}}}
\newcommand{\ompnm}{\omega_{\pnm}}

\newcommand{\pn}{{\bf P}^n}
\newcommand{\hele}{{\bf Z}}

\newcommand{\dt}{{\displaystyle \cdot}}
\newcommand{\st}{\hskip 0.5mm {}^{\rule{0.4pt}{1.5mm}}}              
\newcommand{\disk}{\scriptscriptstyle{\bullet}}

\def\CC{{\mathbb C}}
\def\GG{{\mathbb G}}
\def\ZZ{{\mathbb Z}}
\def\NN{{\mathbb N}}
\def\OO{{\mathbb O}}
\def\QQ{{\mathbb Q}}
\def\VV{{\mathbb V}}
\def\PP{{\mathbb P}}
\def\EE{{\mathbb E}}
\def\FF{{\mathbb F}}
\def\AA{{\mathbb A}}

\newcommand{\GRID}{\vskip 1cm 
\begin{picture}(400,120)
\put(0,120){\line(0,-1){120}}
\put(20,120){\line(0,-1){120}}
\put(40,120){\line(0,-1){120}}
\put(60,120){\line(0,-1){120}}
\put(80,120){\line(0,-1){120}}
\put(100,120){\line(0,-1){120}}
\put(120,120){\line(0,-1){120}}
\put(140,120){\line(0,-1){120}}
\put(160,120){\line(0,-1){120}}
\put(180,120){\line(0,-1){120}}
\put(200,120){\line(0,-1){120}}
\put(220,120){\line(0,-1){120}}
\put(240,120){\line(0,-1){120}}
\put(260,120){\line(0,-1){120}}
\put(280,120){\line(0,-1){120}}
\put(300,120){\line(0,-1){120}}
\put(320,120){\line(0,-1){120}}
\put(340,120){\line(0,-1){120}}
\put(360,120){\line(0,-1){120}}
\put(380,120){\line(0,-1){120}}
\put(400,120){\line(0,-1){120}}

\put(0,0){\line(1,0){400}}
\put(0,20){\line(1,0){400}}
\put(0,40){\line(1,0){400}}
\put(0,60){\line(1,0){400}}
\put(0,80){\line(1,0){400}}
\put(0,100){\line(1,0){400}}
\put(0,120){\line(1,0){400}}
\end{picture}
}

\newcommand{\trekantA}{\vskip 1cm
\begin{picture}(300,140)
\put(25,30){$y^2$}
\put(40,40){\circle{1}}
\put(40,40){\line(1,2){40}}

\put(75,124){$x^2$}
\put(80,120){\circle{1}}
\put(80,120){\line(1,-2){40}}

\put(122,30){$z^2$}
\put(120,40){\circle{1}}
\put(120,40){\line(-1,0){80}}

\put(46,80){$xy$}
\put(60,80){\circle{1}}
\put(60,80){\line(1,0){40}}

\put(105,80){$xz$}
\put(100,80){\circle{1}}
\put(100,80){\line(-1,-2){20}}

\put(80,30){$yz$}
\put(80,40){\circle{1}}
\put(80,40){\line(-1,2){20}}

\put(60,5){\mbox{Figure 3.1.}}

\put(200,30){$y_1y_2$}
\put(220,40){\circle{1}}
\put(220,40){\line(1,2){40}}

\put(250,124){$x_1x_2$}
\put(260,120){\circle{1}}
\put(260,120){\line(1,-2){40}}

\put(300,30){$z_1z_2$}
\put(300,40){\circle{1}}
\put(300,40){\line(-1,0){80}}

\put(216,80){$x_1y_1$}
\put(240,80){\circle{1}}
\put(240,80){\line(1,0){40}}

\put(282,80){$x_1z_1$}
\put(280,80){\circle{1}}
\put(280,80){\line(-1,-2){20}}

\put(256,30){$y_1z_1$}
\put(260,40){\circle{1}}

\put(260,5){\mbox{Figure 3.2.}}

\end{picture}}

\newcommand{\trekantAI}{\vskip 1cm
\begin{picture}(300,140)
\put(25,30){$y^2$}
\put(40,40){\circle{1}}
\put(40,40){\line(1,2){40}}

\put(75,124){$x^2$}
\put(80,120){\circle{1}}
\put(80,120){\line(1,-2){40}}

\put(122,30){$z^2$}
\put(120,40){\circle{1}}
\put(120,40){\line(-1,0){80}}

\put(46,80){$xy$}
\put(60,80){\circle{1}}
\put(60,80){\line(1,0){40}}

\put(105,80){$xz$}
\put(100,80){\circle{1}}
\put(100,80){\line(-1,-2){20}}

\put(80,30){$yz$}
\put(80,40){\circle{1}}

\put(60,5){\mbox{Figure 0.1.}}

\put(200,30){$y_1y_2$}
\put(220,40){\circle{1}}
\put(220,40){\line(1,2){40}}

\put(250,124){$x_1x_2$}
\put(260,120){\circle{1}}
\put(260,120){\line(1,-2){40}}

\put(300,30){$z_1z_2$}
\put(300,40){\circle{1}}
\put(300,40){\line(-1,0){80}}

\put(216,80){$x_1y_1$}
\put(240,80){\circle{1}}
\put(240,80){\line(1,0){40}}

\put(282,80){$x_1z_1$}
\put(280,80){\circle{1}}
\put(280,80){\line(-1,-2){20}}

\put(256,30){$y_1z_1$}
\put(260,40){\circle{1}}

\put(260,5){\mbox{Figure 0.2.}}

\end{picture}}

\newcommand{\trekantB}{\vskip 1cm
\begin{picture}(300,140)
\put(20,30){$y_1^\prime y_2^\prime $}
\put(40,40){\circle{1}}
\put(40,40){\line(1,2){40}}

\put(70,124){$x_1x_2$}
\put(80,120){\circle{1}}
\put(80,120){\line(1,-2){40}}

\put(120,30){$z_1z_2$}
\put(120,40){\circle{1}}
\put(120,40){\line(-1,0){80}}

\put(36,80){$x_1y_2^\prime$}
\put(60,80){\circle{1}}
\put(60,80){\line(1,0){40}}

\put(102,80){$x_1z_2$}
\put(100,80){\circle{1}}
\put(100,80){\line(-1,-2){20}}

\put(76,30){$y_1^\prime z_2$}
\put(80,40){\circle{1}}

\put(60,5){\mbox{Figure 3.3.}}

\put(190,30){$y_1y_2y_1^\prime y_2^\prime$}
\put(220,40){\circle{1}}
\put(220,40){\line(1,2){40}}

\put(250,124){$x_1x_2$}
\put(260,120){\circle{1}}
\put(260,120){\line(1,-2){40}}

\put(300,30){$z z^\prime$}
\put(300,40){\circle{1}}
\put(300,40){\line(-1,0){80}}

\put(206,80){$x_1y_1y_2^\prime$}
\put(240,80){\circle{1}}
\put(240,80){\line(1,0){40}}

\put(282,80){$x_1z$}
\put(280,80){\circle{1}}
\put(280,80){\line(-1,-2){20}}

\put(248,30){$y_1y_1^\prime z$}
\put(260,40){\circle{1}}

\put(240,5){\mbox{Figure 3.4.}}

\end{picture}}

\newcommand{\firkanthex}{\vskip 1cm
\begin{picture}(300,140)

\put(160,80){\bigcircle{114}}

\put(115,30){24}
\put(120,40){\circle{1}}
\put(120,40){\line(1,0){80}}

\put(195,30){14}
\put(200,40){\circle{1}}
\put(200,40){\line(0,1){80}}

\put(195,122){15}
\put(200,120){\circle{1}}
\put(200,120){\line(-1,0){80}}

\put(115,122){25}
\put(120,120){\circle{1}}
\put(120,120){\line(0,-1){80}}

\put(155,30){46}
\put(160,40){\circle{1}}
\put(160,40){\line(0,1){80}}
\put(160,120){\circle{1}}
\put(155,122){35}

\put(108,72){26}
\put(120,80){\circle{1}}
\put(120,80){\line(1,0){80}}
\put(160,80){\circle{1}}
\put(148,72){36}
\put(200,80){\circle{1}}
\put(202,72){13}

\put(140,5){\mbox{Figure 4.4.}}
\end{picture}}

\newcommand{\bipullA}{\vskip 1cm
\begin{picture}(300,140)

\put(53,77){1}
\put(60,80){\circle{1}}
\put(60,80){\line(1,1){40}}
\dottedline{6}(60,80)(120,80)
\put(60,80){\line(1,-3){20}}
\put(60,80){\line(1,-1){20}}

\put(95,122){4}
\put(100,120){\circle{1}}
\put(100,120){\line(1,-2){20}}
\put(100,120){\line(-1,-3){20}}

\put(70,18){5}
\put(80,20){\circle{1}}
\put(80,20){\line(0,1){40}}
\put(80,20){\line(2,3){40}}

\put(122,82){3}
\put(120,80){\circle{1}}
\put(120,80){\line(-2,-1){40}}

\put(84,55){2}
\put(80,60){\circle{1}}

\put(70,5){\mbox{Figure 4.5.}}

\put(260,80){\bigcircle{114}}

\put(215,30){56}
\put(220,40){\circle{1}}
\put(220,40){\line(1,0){80}}

\put(295,30){67}
\put(300,40){\circle{1}}
\put(300,40){\line(0,1){80}}

\put(295,122){47}
\put(300,120){\circle{1}}
\put(300,120){\line(-1,0){80}}

\put(215,122){45}
\put(220,120){\circle{1}}
\put(220,120){\line(0,-1){80}}

\put(255,30){36}
\put(260,40){\circle{1}}
\put(260,40){\line(0,1){80}}
\put(260,120){\circle{1}}
\put(255,122){124}

\put(208,72){235}
\put(220,80){\circle{1}}
\put(220,80){\line(1,0){80}}
\put(260,80){\circle{1}}
\put(248,72){123}
\put(300,80){\circle{1}}
\put(302,72){17}

\put(240,5){\mbox{Figure 4.6.}}
\end{picture}}

\newcommand{\bipullB}{\vskip 1cm
\begin{picture}(300,140)

\put(100,80){\bigcircle{114}}

\put(55,30){57}
\put(60,40){\circle{1}}
\put(60,40){\line(1,0){80}}

\put(135,30){17}
\put(140,40){\circle{1}}
\put(140,40){\line(0,1){80}}

\put(135,122){124}
\put(140,120){\circle{1}}
\put(140,120){\line(-1,0){80}}

\put(55,122){45}
\put(60,120){\circle{1}}
\put(60,120){\line(0,-1){80}}

\put(95,30){67}
\put(100,40){\circle{1}}
\put(100,40){\line(0,1){80}}
\put(100,120){\circle{1}}
\put(95,122){234}

\put(48,72){56}
\put(60,80){\circle{1}}
\put(60,80){\line(1,0){80}}
\put(100,80){\circle{1}}
\put(88,72){36}
\put(140,80){\circle{1}}
\put(142,72){123}

\put(80,5){\mbox{Figure 4.7.}}

\put(260,80){\bigcircle{114}}

\put(215,30){67}
\put(220,40){\circle{1}}
\put(220,40){\line(1,0){80}}

\put(295,30){56}
\put(300,40){\circle{1}}
\put(300,40){\line(0,1){80}}

\put(295,122){45}
\put(300,120){\circle{1}}
\put(300,120){\line(-1,0){80}}

\put(215,122){47}
\put(220,120){\circle{1}}
\put(220,120){\line(0,-1){80}}

\put(255,30){36}
\put(260,40){\circle{1}}
\put(260,40){\line(0,1){80}}
\put(260,120){\circle{1}}
\put(255,122){124}

\put(208,72){37}
\put(220,80){\circle{1}}
\put(220,80){\line(1,0){80}}
\put(260,80){\circle{1}}
\put(248,72){123}
\put(300,80){\circle{1}}
\put(302,72){125}

\put(240,5){\mbox{Figure 4.8.}}
\end{picture}}

\newcommand{\pentpyramid}{\vskip 1cm
\begin{picture}(300,140)

\put(126,60){40}
\put(140,60){\circle{1}}
\put(140,60){\line(2,-1){40}}

\put(175,30){34}
\put(180,40){\circle{1}}
\put(180,40){\line(2,1){40}}

\put(222,60){23}
\put(220,60){\circle{1}}
\put(220,60){\line(0,1){40}}

\put(222,100){12}
\put(220,100){\circle{1}}
\put(220,100){\line(-2,1){40}}

\put(175,122){56}
\put(180,120){\circle{1}}
\put(180,120){\line(2,-3){40}}
\put(180,120){\line(0,-1){80}}
\put(180,120){\line(-2,-3){40}}
\put(180,120){\line(-2,-1){40}}

\put(126,100){01}
\put(140,100){\circle{1}}
\put(140,100){\line(0,-1){40}}
\dottedline{6}(140,100)(220,100)

\put(160,5){\mbox{Figure 4.1.}}
\end{picture}}

\newcommand{\elongpyramid}{\vskip 1cm
\begin{picture}(300,140)

\put(126,40){47}
\put(140,40){\circle{1}}
\put(140,40){\line(2,-1){40}}
\dottedline{5}(140,40)(200,40)

\put(175,12){57}
\put(180,20){\circle{1}}
\put(180,20){\line(1,1){20}}
\put(180,20){\line(0,1){60}}

\put(202,40){67}
\put(200,40){\circle{1}}
\put(200,40){\line(0,1){60}}

\put(202,100){36}
\put(200,100){\circle{1}}
\put(200,100){\line(-1,-1){20}}
\put(200,100){\line(-3,2){30}}

\put(166,72){25}
\put(180,80){\circle{1}}
\put(180,80){\line(-1,4){10}}
\put(180,80){\line(-2,1){40}}

\put(126,100){14}
\put(140,100){\circle{1}}
\put(140,100){\line(0,-1){60}}
\dottedline{5}(140,100)(200,100)

\put(165,122){123}
\put(170,120){\circle{1}}
\put(170,120){\line(-3,-2){30}}

\put(160,0){\mbox{Figure 4.2.}}
\end{picture}}

\newcommand{\selvdualpol}{\vskip 1cm 
\begin{picture}(300,140)

\put(160,80){\bigcircle{114}}

\put(115,30){6}
\put(120,40){\circle{1}}
\put(120,40){\line(1,0){80}}

\put(195,30){4}
\put(200,40){\circle{1}}
\put(200,40){\line(0,1){80}}

\put(195,122){2}
\put(200,120){\circle{1}}
\put(200,120){\line(-1,0){80}}

\put(115,122){0}
\put(120,120){\circle{1}}
\put(120,120){\line(0,-1){80}}

\put(155,30){5}
\put(160,40){\circle{1}}
\put(160,40){\line(0,1){80}}
\put(160,120){\circle{1}}
\put(155,122){1}

\put(108,72){7}
\put(120,80){\circle{1}}
\put(120,80){\line(1,0){80}}
\put(160,80){\circle{1}}
\put(148,72){c}
\put(200,80){\circle{1}}
\put(202,72){3}

\put(140,5){\mbox{Figure 4.3.}}
\end{picture}}

\newcommand{\MaFigNog}{\hskip 1cm
\begin{picture}(200,105)
\put(40,4){$x_1^{a+b+1}$} \put(56,18){x}
\dottedline{6}(70,20)(94,20)
\put(104,18){x}
\put(116,18){x}
\put(131,21){\circle{6}}
\dottedline{6}(140,20)(150,20)
\put(156,21){\circle{6}}
\put(164,4){$x_2^{a+b+1}$}

\put(64,30){x}
\dottedline{6}(76,32)(92,32)
\put(96,30){x}
\put(108,30){x}
\put(123,33){\circle{6}}
\dottedline{6}(130,32)(140,32)
\put(148,33){\circle{6}}

\dottedline{6}(80,50)(90,65)
\dottedline{6}(130,50)(120,65)

\put(105,78){\circle{6}}
\put(101,88){$x_3^{a+b+1}$}

\end{picture}
}

\title [Cellular resolutions of Cohen-Macaulay monomial ideals]
{Cellular resolutions of Cohen-Macaulay monomial ideals}
\author { Gunnar Fl{\o}ystad}
\address{ Dep. of Mathematics\\
          Johs. Brunsgt. 12\\
          5008 Bergen\\
          Norway}   
        
\email{ gunnar@mi.uib.no}

\begin{abstract}
We investigate monomial labellings on cell complexes, giving
a minimal cellular resolution of the ideal generated by these monomials, 
and such that the associated quotient ring is Cohen-Macaulay.  
We introduce a notion of such a labelling being maximal. There is only
a finite number of maximal such labellings for each cell complex,
and we classify these for trees, subdivisions of polygons, and some
classes of selfdual polytopes. 
\end{abstract}

\maketitle

{\Small 2000 MSC : Primary 13D02. Secondary 13F55, 05E99.}

\section*{Introduction}


 In this paper we study cellular resolutions of monomial ideals which have a Cohen-Macaulay quotient
ring. Cellular resolutions of monomial ideals, introduced in \cite{BPS} and \cite{BS}, is a very natural
technique for constructing resolutions of monomial ideals, and appealing in its 
blending of topological constructions, 
combinatorics and algebraic ideas. Much activity has centred around it in the last decade, and
good introductions and surveys
may be found in \cite{MiSt} and \cite{We}. 
Usually one starts with a monomial ideal and finds a suitable labelled cell complex giving
a (preferably minimal) resolution of the monomial ideal. 
It was hoped that a minimal resolution of a monomial 
ideal was always cellular, but this was shown recently not to be so, \cite{Ve}.

Here we turn this around and start with the cell complex, and ask what monomial labellings
are such that this cell complex gives a minimal cellular resolution of the ideal formed by the
monomials in the labelling. To limit the task we assume that the monomial labelling is such that
the monomial quotient ring is Cohen-Macaulay, and the cell complex gives a minimal cellular resolution of
it. Such a labelling will be called a Cohen-Macaulay (CM) monomial labelling.

For a given cell complex, we define a notion of maximal CM monomial labelling. These are 
essentially labellings by monomials ${\bf x}^{{\bf a}_i}$ in a polynomial ring 
$\kr[x_1, \ldots, x_n]$ such that any CM monomial labelling of the cell complex by monomials
${\bf y}^{{\bf b}_i}$ in $\kr[y_1, \ldots, y_m]$ may be obtained by a multigraded homomorphism
$\kr[x_1, \ldots, x_n] \pil \kr[y_1, \ldots, y_m]$ sending the monomial ${\bf x}^{{\bf a}_i}$
to ${\bf y}^{{\bf b}_i}$. For any cell complex there turns out to be a finite number of 
maximal CM monomial labellings, and we are in particular concerned with classifying these labellings.

First we consider the case where the cell complex is one-dimensional, it must then be a tree.
We show that any CM monomial quotient ring of codimension two has a cellular resolution given 
by a tree. Then we show that for a given tree there is a unique maximal CM monomial labelling
up to isomorphism.

Then we consider the case where the cell complex is two-dimensional. First we look at the case
of a polygon. If it is an $n$-gon with $n$ even, there are no CM monomial labellings, and if 
$n$ is odd there is a unique CM monomial labelling consisting of monomials of degree $(n-1)/2$
in $n$ variables. (This is known but we do not know of a specific reference.) 
We then proceed to consider
subdivisions of polygons. By the techniques we use this is quite hard and we only do this in 
the case of a polygon with a single chord. We show that there are then two maximal CM 
monomial labellings. The description of them splits into the cases of whether we have an $n$-gon, 
where $n$
is even or odd. In all these cases the monomials are in $n+1$ variables. This 
makes it reasonable to conjecture that in a subdivision of an $n$-gon with $r$ chords, any
maximal CM monomial labelling consists of monomials in  $n+r$ variables.

An interesting example is the subdivision of the hexagon. 

\trekantAI
\medskip

\noindent A CM monomial labelling is given by Figure 0.1. One may polarise this and get a CM monomial labelling
in six variables, Figure 0.2. However this is not maximal. 
A maximal monomial labelling is given by Figure 3.4 in Subsection \ref{SecPolyodd}, and
consists of monomials in eight variables.

In the end we consider CM monomial labellings of polytopes of dimension three and larger.
We classify the maximal CM monomial labelling on pyramids over self-dual polytopes $X$, provided
we know the maximal CM labellings of $X$. We also consider the elongated pyramid over $X$ which is
a union of $X \times [0,1]$ and the pyramid over $X$ glued together at $X \times \{1\}$.
Given a maximal CM labelling of $X$, we construct such a labelling over the elongated pyramid.
We also give several examples of CM labellings of three-dimensional self-dual polytopes, which
give cellular resolutions of Gorenstein Stanley-Reisner rings of codimension four.

\vskip 3mm

The organisation of the paper is as follows. In Section 1 we define the notion of a maximal 
CM monomial labelling. We show that there is a finite number of such for any cell complex, and we
give a topological characterisation of such labellings. 
In Section 2, 3, and 4 we consider maximal CM monomial labellings of cell complexes of dimension
1, 2, and 3 and higher, respectively. In Section 2 we consider the case of trees, and show that
there is a unique maximal CM monomial labelling, up to isomorphism. In Section 3 we consider the
case of subdivisions of polygons, and in Section 4 we give maximal CM monomial labellings
of self-dual polytopes, as well as examples of monomial labellings of three-dimensional 
self-dual polytopes giving cellular resolutions of Gorenstein Stanley-Reisner rings of codimension four.

\section{Maximal Cohen-Macaulay monomial labellings}

Let $\kr [x_1, \ldots, x_r]$ be a polynomial ring, which we may identify
with the semi-group ring $\kr [\NN^r]$. Given an integer $n$. We shall
consider ordered sets of monomials $(\xx^{\aa_1},\xx^{\aa_2}, \ldots, \xx^{\aa_n})$
where none divide any other, i.e. they form a set of minimal generators for
an ideal.

A semi-group homomorphism $\NN^r \mto{\phi} \NN^s$ maps this ordered set of monomials
to another ordered set $(\yy^{\bb_1},\yy^{\bb_2}, \ldots, \yy^{\bb_n})$ given by
$\phi(\aa_i) = \bb_i$. In this way we get a category $\Mon(n)$ whose objects are
pairs $(\NN^r,\aa)$ where $\aa$ is an $n$-tuple of elements of $\NN^r$
and morphisms are given by semi-group homomorphisms as above, mapping
the $n$-tuples to each other.

Now consider the full subcategory $\CM(n,c)$ of $\Mon(n)$ consisting of those 
ordered sets of monomials generating an ideal $I$ such that the quotient
ring $\kr [x_1, \ldots, x_r]/I$
is a Cohen-Macaulay ring of codimension $c$.

\eks Monomial ideals which are not square free may be polarised. For
instance $(a^2, ab, b^2)$ is in $\CM(3,2)$ and polarises to
$(a_1a_2, a_1b_1, b_1b_2)$ also in $\CM(3,2)$ (with $a_1-a_2, b_1-b_2$ as
a regular sequence in the quotient ring). This leads us to think 
of non square free monomial ideals as somewhat compressed monomial ideals.
Alternatively polarisation is a ``loosening up'' of the non square free
monomial ideal. However, also square free monomial ideals can be ``loosened
up''. For instance $(ca,ab,bc)$ in $\CM(3,2)$ may be ``loosened up'' to
$(c_1a,ab,bc_2)$ isomorphic to the polarization above (here $c_1-c_2$ is
a regulare element in the quotient ring). A central theme of this
paper is to investigate the most ``free'' or ``loosened up'' monomial
ideals. We term these maximal monomial ideals. Here is the formal 
definition.
\eksfin


\defi
An object $(\NN^r, \aa)$ in $\CM(n,c)$ is {\it maximal}
if whenever there is a morphism $\phi : (\NN^s, \bb) \pil (\NN^r, \aa)$, the map $\NN^s \pil
\NN^r$ is a surjection and there is a
splitting $\psi : \NN^r \pil \NN^s$, i.e. $\phi \circ \psi$ is the identity on $\NN^r$.
\defifin

\eks The pair $(\NN^n, (x_1, \ldots, x_n))$ is maximal in $\CM(n,n)$
and is, up to isomorphism, i.e. permutation of variables, the only such object.
For, instance, the pair $(\NN^3, (x_1, x_2x_3))$ is not maximal in $CM(2,2)$ because there is a 
morphism  $\phi$ to it
from $(\NN^2, (x_1, x_2))$ such that $e_1 \mapsto e_1, e_2 \mapsto e_2 + e_3 $. Note that
there is also a morphism from $(\NN^3, (x_1, x_2x_3))$ to $(\NN^2, (x_1, x_2))$ sending
$e_1 \mapsto e_1, e_2 \mapsto e_2$, and $e_3 \mapsto 0$. The morphism $\phi$ is a splitting
of it, consistent with $(\NN^2, (x_1, x_2))$ being maximal.
\eksfin

\rem Another paper that considers maps of monomial generators is \cite{GPW}. There one studies
the LCM lattice of the monomials and considers a map $\phi$ between two such lattices 
which induces an isomorphism 
on the atoms i.e. the monomial generators. Note that this is a somewhat different situation from 
ours since in our case, the map on monomials is induced from a map of semigroup rings. 
If the map $\phi$ they consider preserves joins,
they show that if $F$ is a free resolution of the ideal generated by the first set of monomials, 
there is a construction of a complex $\phi(F)$ which is a free resolution of the of the ideal 
generated by the second set of monomials. If $\phi$ is an isomorphism of lattices, then
$F$ is a minimal resolution iff $\phi(F)$ is a minimal resolution.
\remfin

\begin{lemma} Let $(\NN^r, \aa)$ and $(\NN^s, \bb)$ be two maximal elements in 
$\CM(n,c)$. Then they are either isomorphic or there are no morphisms between them.
\end{lemma}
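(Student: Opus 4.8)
The plan is to exploit the symmetry of the maximality condition. Suppose there is at least one morphism between the two objects; by symmetry assume it goes $\phi : (\NN^s, \bb) \pil (\NN^r, \aa)$. Since $(\NN^r, \aa)$ is maximal, the definition immediately gives that $\phi$ is surjective and admits a splitting $\psi : (\NN^r, \aa) \pil (\NN^s, \bb)$ with $\phi \circ \psi = \id_{\NN^r}$. Note $\psi$ is automatically a morphism in $\CM(n,c)$: it is a semigroup homomorphism, and it sends $\aa_i = \phi(\bb_i)$... wait — one must check $\psi(\aa_i) = \bb_i$. This is the one point requiring a small argument, and I expect it to be the main obstacle.

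So the key step is: given the splitting $\psi$ of $\phi$ as semigroup maps, show $\psi(\aa_i) = \bb_i$ for all $i$, so that $\psi$ is a morphism in the category $\Mon(n)$ (and hence in $\CM(n,c)$, since its target lies in that subcategory). The naive splitting at the level of abelian groups need not preserve the tuple, so I would instead argue as follows. We know $\phi(\bb_i) = \aa_i$ and $\phi(\psi(\aa_i)) = \aa_i$, so $\psi(\aa_i)$ and $\bb_i$ have the same image under $\phi$. Now use that $\bb$ is a set of \emph{minimal} generators and that $(\NN^s,\bb)$ is maximal: apply the maximality of $(\NN^s, \bb)$ to the morphism $\psi \circ \phi : (\NN^s, \bb) \pil (\NN^s, \bb')$, where $\bb' = (\psi(\aa_1), \ldots, \psi(\aa_n)) = (\psi\phi(\bb_1), \ldots, \psi\phi(\bb_n))$. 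Since $(\NN^s, \bb')$ is the image of $(\NN^s,\bb)$ under a composite of morphisms, and since $\phi$ is surjective with splitting $\psi$, the composite $\psi\phi$ is an idempotent semigroup endomorphism of $\NN^s$ whose image is (isomorphic to) $\NN^r$; one shows this image, together with $\bb'$, is again an object of $\CM(n,c)$ receiving a morphism from the maximal $(\NN^s,\bb)$, forcing $\psi\phi$ to be an automorphism of $\NN^s$ carrying $\bb$ to $\bb'$. But an idempotent automorphism is the identity, so $\bb' = \bb$, i.e. $\psi(\aa_i) = \bb_i$.

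Once $\psi$ is known to be a morphism in $\CM(n,c)$, we run the maximality condition a second time, now for $(\NN^s,\bb)$: the morphism $\psi : (\NN^r,\aa) \pil (\NN^s,\bb)$ must be surjective. Thus both $\phi$ and $\psi$ are surjective semigroup homomorphisms between $\NN^r$ and $\NN^s$ with $\phi\psi = \id_{\NN^r}$; surjectivity of $\psi$ then forces $\psi\phi = \id_{\NN^s}$ as well (a surjection that has a section which is also a surjection is a bijection). Hence $\phi$ and $\psi$ are mutually inverse isomorphisms of semigroups matching the tuples $\aa$ and $\bb$, so $(\NN^r,\aa) \iso (\NN^s,\bb)$ in $\CM(n,c)$. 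This gives the dichotomy: if any morphism exists in either direction, the two maximal objects are isomorphic; otherwise there are none.

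The delicate point, as flagged, is the middle paragraph: extracting from the abstract splitting $\psi$ the fact that it actually \emph{is} a morphism in our category, i.e. that it respects the labelling tuples. The cleanest route is probably the idempotent argument above, using minimality of the generating sets and the Cohen--Macaulay/codimension hypotheses only insofar as they guarantee that the relevant images remain objects of $\CM(n,c)$; one should check that the quotient ring attached to the image of an idempotent endomorphism is a polarisation-type specialization of the original, hence still Cohen--Macaulay of the same codimension $c$, so that the category-theoretic maximality may legitimately be invoked. If that verification is cumbersome, an alternative is to phrase everything in terms of LCM lattices and invoke the kind of resolution-transfer statement recalled in the Remark referencing \cite{GPW}, but the direct semigroup argument seems more self-contained.
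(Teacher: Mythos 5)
Your outer two paragraphs are exactly the paper's argument: apply maximality of $(\NN^r,\aa)$ to $\phi$ to get surjectivity and a splitting, apply maximality of $(\NN^s,\bb)$ to the splitting $\psi$ to get surjectivity the other way, and conclude that $\phi$ and $\psi$ are mutually inverse isomorphisms matching $\aa$ with $\bb$. The point you isolate as the ``delicate'' one --- whether $\psi(\aa_i)=\bb_i$ --- is something the paper does not argue at all: in its proof (and already in the example following the definition, where the splitting is itself exhibited as a morphism) the splitting in the definition of maximality is read as a morphism of pairs $(\NN^r,\aa)\pil(\NN^s,\bb)$, so that $\psi(\aa_i)=\bb_i$ is part of what maximality supplies. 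With that reading your middle paragraph is unnecessary and the rest of your proof is the paper's proof.

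If instead one insists on the literal weak reading (the splitting is merely a semigroup map with $\phi\circ\psi=\id$), then your proposed repair has a genuine gap. First, maximality as defined constrains morphisms \emph{into} a maximal object, but you invoke the maximality of $(\NN^s,\bb)$ on $\psi\circ\phi:(\NN^s,\bb)\pil(\NN^s,\bb')$, a morphism \emph{out} of it, about which the definition says nothing. Second, for $\psi\circ\phi$ to be a morphism of $\CM(n,c)$ at all you need $(\NN^s,\bb')$, with $\bb'=(\psi(\aa_1),\ldots,\psi(\aa_n))$, to be an object of $\CM(n,c)$, i.e.\ the quotient of $\kr[y_1,\ldots,y_s]$ by the ideal generated by the $\yy^{\psi(\aa_i)}$ must be Cohen--Macaulay of codimension $c$; you only assert this (``one shows'', ``polarisation-type specialization''), and pushing monomials forward along a semigroup homomorphism does not in general preserve Cohen--Macaulayness --- that failure is precisely what makes the category nontrivial. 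Third, even granting both points, nothing you state forces the idempotent $\psi\phi$ to be an automorphism; the observation that an idempotent automorphism is the identity is correct, but its hypothesis is never established. So either adopt the reading of the definition that the paper itself uses, in which case the middle paragraph should simply be deleted, or the key claim $\psi(\aa_i)=\bb_i$ remains unproved.
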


\begin{proof} If $(\NN^s, \bb) \pil (\NN^r, \aa)$ is a morphism, then due to the maximality
of $(\NN^r,\aa)$, there is a splitting $(\NN^r, \aa) \pil (\NN^s, \bb)$, so $s \geq r$. But 
similarly since $(\NN^s,\bb)$ is maximal, we must have $s \leq r$. So $s=r$ and the semigroup
homomorphism $\NN^r \pil \NN^s$ is an isomorphism.
\end{proof}

\begin{proposition} \label{MaxProFin} In $\CM(n,c)$ there is a finite set of maximal objects, each of 
which consists of square free monomials.
To any object $(\NN^r, \aa)$ in $\CM(n,c)$ there is a morphism from some maximal object to this
object, i.e. there is a maximal object $(\NN^s,\bb)$ and a semi-group homomorphism
$\NN^s \pil \NN^r$ taking $\bb$ to $\aa$.
\end{proposition}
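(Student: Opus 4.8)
The plan is to reduce to a square-free situation by "loosening up" as far as possible, and to control that process by a monotone numerical invariant so that it must terminate.

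\medskip

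\noindent\textbf{Step 1: every object receives a morphism from a square-free object.}
Given $(\NN^r,\aa)$ in $\CM(n,c)$, I would polarise the monomial ideal $I$ generated by $\xx^{\aa_1},\dots,\xx^{\aa_n}$. Polarisation replaces each variable $x_j$ occurring to power $e_j = \max_i a_{ij}$ by new variables $x_{j,1},\dots,x_{j,e_j}$ and yields a square-free monomial ideal $I'$ in a polynomial ring $\kr[\yy]$ whose quotient is again Cohen--Macaulay of the same codimension $c$ (the differences $x_{j,k}-x_{j,k+1}$ forming a regular sequence on the new quotient ring, cf.\ the Example on $(a^2,ab,b^2)$). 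The semi-group homomorphism $\NN^s \pil \NN^r$ sending each $e_{j,k}\mapsto e_j$ carries the polarised generators to the original ones, so it is a morphism $(\NN^s,\bb')\pil(\NN^r,\aa)$ in $\CM(n,c)$ with $(\NN^s,\bb')$ square free. Thus it suffices to produce, over any square-free object, a morphism from a maximal one, and to show there are only finitely many maximal objects up to isomorphism (Lemma 1.5 already tells us distinct maximal objects admit no morphisms, so "finitely many isomorphism classes" gives "finitely many" in the relevant sense).

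\medskip

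\noindent\textbf{Step 2: maximal objects are square free, via a terminating enlargement process.}
Suppose $(\NN^r,\aa)$ is maximal. If some $\xx^{\aa_i}$ is not square free, I claim it is not maximal: writing $x_j^2 \mid \xx^{\aa_i}$, polarise in the variable $x_j$ to get $(\NN^{r+1},\bb)$ which still lies in $\CM(n,c)$, together with the morphism $\phi:(\NN^{r+1},\bb)\pil(\NN^r,\aa)$ collapsing the two copies of $x_j$. A splitting $\psi$ of $\phi$ would be an injective semi-group homomorphism $\NN^r\pil\NN^{r+1}$ with $\phi\psi=\id$; one checks directly that no such $\psi$ takes $\aa$ to $\bb$ (the generator $\bb_i$ involves both new variables while any $\psi(\aa_i)$ can involve at most the images of the $\aa_i$'s support, and the degree/support bookkeeping fails). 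Hence $\phi$ has no splitting, contradicting maximality. So every maximal object is square free.

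\medskip

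\noindent\textbf{Step 3: finiteness.}
For square-free objects in $\CM(n,c)$ I would bound the number of variables that can "matter". A square-free monomial ideal with $n$ generators involves at most — but here more care is needed — one uses that the Cohen--Macaulay codimension-$c$ condition forces each generator to have degree at least $c$ (since the ideal is unmixed of height $c$... actually one needs: the associated primes all have height $c$, and each generator is contained in some minimal prime, giving a lower bound; the upper bound on degrees comes from Cohen--Macaulayness via the regularity or via the fact that a minimal free resolution has length $c$, bounding the multidegrees). Concretely: in a maximal object no two variables can appear in exactly the same set of generators (otherwise identify them to get a smaller object mapping onto ours, with a splitting — contradicting that maximality forces $\psi\phi=\id$ forcing $s=r$ as in Lemma 1.5). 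Since each variable is then determined by the nonempty subset of $\{1,\dots,n\}$ of generators it divides, there are at most $2^n-1$ variables, so only finitely many square-free objects on $\leq n$ generators up to isomorphism, hence finitely many maximal ones. Combining: start from an arbitrary object, polarise to a square-free one (Step 1), then repeatedly apply enlargements of the type in Step 2 (each one strictly increasing the number of "distinct-support" variables, bounded by $2^n-1$); the process terminates at an object over which no further enlargement admits no splitting — and one argues this terminal object is maximal, and that it maps onto the one we started over, hence onto the original $(\NN^r,\aa)$.

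\medskip

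\noindent\textbf{Main obstacle.}
The delicate point is Step 2 together with the termination argument in Step 3: I need a single combinatorial invariant of a square-free object in $\CM(n,c)$ that (a) strictly increases under each "loosening" move, and (b) is bounded above uniformly in terms of $n$ (and perhaps $c$). The "number of distinct supports among the $r$ variables" is the natural candidate, but I must verify that the relevant loosening moves — splitting a variable appearing in generator set $S$ into two variables with generator sets $S'$, $S''$ — can always be arranged to strictly increase this count while staying in $\CM(n,c)$, and that a square-free object maximal for this invariant is genuinely maximal in the categorical sense of the Definition (i.e.\ that the only morphisms into it are split surjections). Verifying that Cohen--Macaulayness is preserved under these moves, and bounding degrees of generators so the whole combinatorial search space is finite, is where the real work lies.
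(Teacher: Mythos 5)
Your two main moves are exactly the paper's: (i) polarisation maps a square-free object onto any given object and cannot be split inside the category, so maximal objects are square free; (ii) if two variables have the same support, the merged labelling gives an object of $\CM(n,c)$ mapping non-surjectively to the given one, so a maximal object has pairwise distinct variable supports and is therefore determined up to isomorphism by a family of distinct subsets of $[n]$, whence finiteness. Three adjustments would bring your write-up in line with the actual proof. First, the degree discussion in Step 3 is unnecessary and partly false: Cohen--Macaulayness of codimension $c$ does not force generators of degree $\geq c$ (the paper's own example $(\NN^3,(x_1,x_2x_3))$ in $\CM(2,2)$ has a degree-one generator); the count by distinct supports is all that is used. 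Second, the non-splitting checks are one-liners: a splitting $\psi$ in the category satisfies $\phi\circ\psi=\id$, hence $\psi(e_j)\neq 0$ for all $j$, so a coordinate $\geq 2$ in $\aa_i$ forces one in $\psi(\aa_i)=\bb_i$; and in the merging step the incoming map fails surjectivity for rank reasons, which already contradicts maximality. Third, your ``main obstacle'' largely dissolves: no new invariant, degree bound, or analysis of general loosening moves is needed, since every object receives a morphism from a square-free, distinct-support object (polarise, then merge equal supports), and these form a finite set in which mutual morphisms force isomorphism; existence of a maximal object over a given one then follows from this finiteness together with the reduction/refinement bookkeeping of Proposition \ref{MaxProMax} --- indeed the paper's proof of Proposition \ref{MaxProFin} stops after (i) and (ii) and leaves that last step implicit, so your flagged concern is legitimate but needs only this, not the machinery you anticipate.
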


\begin{proof}
If the monomials are not square free, we can polarise the monomials. So we
get a morphism $(\NN^s, \bb) \pil (\NN^r,\aa)$, where the $\bb$'s 
are $0,1$-vectors. Clearly there cannot be any splitting $\phi$ in the reverse
direction if $\aa_i$ is not square free, since then $\phi(\aa_i)$ would not
be either.
Thus all maximal objects must be square free. 

Now given an ordered set of square-free monomials $(m_1, \ldots, m_n)$
in $\kr [x_1, \ldots, x_r]$.
To each variable $x_p$ we associate the subset $V_p$ of $[n]$ consisting of those
positions $i$ such that $x_p$ divides $m_i$. This gives us a multiset of subsets
of $[n]$, and this multiset determines the isomorphism class in $\CM(n,c)$ of the ordered
set of monomials. If $V_p = V_q$ for some $p < q$, we get a morphism from some
$(\NN^{r-1}, \bb)$ to $(\NN^r, \aa)$ by sending $e_i$ to $e_i$ for $i \neq p,q$ and
sending $e_p$ to $e_p + e_q$. But then $(\NN^r, \aa)$ cannot be maximal (there 
cannot be a splitting due to the ranks of semigroups). Iterating this process
we can in the end assume that we to our monomial labelling have associated a 
family of distinct subsets of $[n]$. A maximal object must be of this kind.
Since there is only a finite number of families of subsets of $[n]$, there
is only a finite number of maximal objects.

\end{proof}

If $m_1, \ldots, m_n$ are square free monomials, we may to each variable $x_p$ 
associate the set $V_p$ of all $i$ in $[n]$ such that $x_p$ divides $m_i$.
If the monomials give a maximal object, we know from the proof above
that the $V_p$ are all distinct, thus forming a family of subsets of $[n]$.
We let $\CM_*(n,c)$ denote the full subcategory of $\CM(n,c)$ consisting
of $(\NN^r, \aa)$ such that the monomials $\xx^{\aa_i}$ are square free
and the subsets $V_p \sus [n]$ associated to the variables are all distinct.
Note that this family of subsets determines the isomorphism class of the object
$(\NN^r, \aa)$. Also if $(\NN^r, \aa)$ and $(\NN^s, \bb)$ are objects
in $\CM_*(n,c)$ with associated families $\gF$ and $\gG$ of subsets of $[n]$, 
then there is a morphism from the first
to the latter iff every element of $\gG$ is a disjoint union of elements of $\gF$.
This lead us to on the families of subsets of $[n]$ to consider the refinement partial
order given by $\gF \succ \gG$ iff $\gF$ consists of refinements of elements of $\gG$
together with additional subsets of $[n]$.
(A refinement of a set $S$ are subsets of it such that $S$ is a disjoint union of them.)

If $\gF$ is a family of subsets of $[n]$ we let its {\it reduction}
$\gF^{\red}$ be the subfamily of $\gF$ consisting of those elements 
(which are subsets of $[n]$)
which are not disjoint unions of other elements of $\gF$.

\begin{proposition}  \label{MaxProMax}
a. If $\gF$ corresponds to an object in $CM_*(n,c)$, then $\gF^{\red}$ corresponds to  an 
object in this category.

b. An object in $\CM_*(n,c)$ is maximal iff the associated family $\gF$ is reduced and
is maximal among reduced associated families for the refinement order.
\end{proposition}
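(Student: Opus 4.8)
The plan is to use the dictionary already set up in the text: an object of $\CM_*(n,c)$ is determined by its family $\gF$ of distinct subsets of $[n]$, its monomial labels are $m_i=\prod_{T\in\gF,\ i\in T}x_T$, and between $\CM_*$-objects a morphism ($\gF$-object $\to$ $\gG$-object) exists exactly when $\gF\succ\gG$. Granting this, part (a) is a Cohen--Macaulayness statement about monomial ideals, and part (b) reduces to formal manipulation with the dictionary together with Proposition \ref{MaxProFin} and the preceding lemma (two maximal objects are isomorphic or admit no morphism between them).

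For part (a) I would first handle removal of a single reducible element and then iterate. Suppose $S\in\gF$ with $S=T_1\sqcup\cdots\sqcup T_k$, $k\geq 2$, the $T_j\in\gF$ distinct from $S$, and set $\gF'=\gF\setminus\{S\}$. Writing $R=\kr[x_T:T\in\gF]$, $I=(m_i)$, and $R'=\kr[x_T:T\in\gF']$, $I'=(m_i')$, the substitution $x_S\mapsto 1$ is a surjection $R\to R'$ with kernel $(x_S-1)$ sending $m_i$ to $m_i'$ (the monomial $m_i$ only loses the factor $x_S^{[i\in S]}$), so $R'/I'\iso (R/I)/(x_S-1)$. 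Since the associated primes of the monomial ideal $I$ are generated by variables, hence contained in $(x_T:T\in\gF)$, the element $x_S-1$ is a nonzerodivisor on $R/I$; therefore $R'/I'$ is Cohen--Macaulay, its dimension drops by one, and, the ambient dimension also dropping by one, its codimension is still $c$. The one combinatorial point is that the $m_i'$ remain a minimal generating set: $m_i'\mid m_j'$ would mean $i\in T\Rightarrow j\in T$ for all $T\in\gF'$, and since any $i\in S$ lies in some $T_l\in\gF'$ this also forces $i\in S\Rightarrow j\in S$, i.e. $m_i\mid m_j$, which is excluded; distinctness of the associated subsets is inherited from $\gF$. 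Finally, every element of $\gF\setminus\gF^{\red}$ is a disjoint union of $\geq 2$ elements of $\gF^{\red}$ (induction on cardinality), so one may strip these off one at a time, each staying reducible along the way, ending exactly at $\gF^{\red}$, with every step staying inside $\CM_*(n,c)$.

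For part (b), first assume $(\NN^\gF,\aa)$ is maximal. If $\gF$ were not reduced, the morphism $(\NN^{\gF'},\bb)\to(\NN^\gF,\aa)$ sending $e_{T_j}\mapsto e_{T_j}+e_S$ and fixing the remaining generators — a morphism whose source lies in $\CM(n,c)$ by the single-step case of (a) — has underlying map $\NN^{|\gF|-1}\to\NN^{|\gF|}$, which is not surjective, contradicting maximality; so $\gF$ is reduced. Next, let $\gG$ be a reduced family of an object of $\CM(n,c)$ with $\gG\succ\gF$. By Proposition \ref{MaxProFin} pick a maximal object $M$ with a morphism $M\to(\NN^\gG,\cc)$; composing with the morphism $(\NN^\gG,\cc)\to(\NN^\gF,\aa)$ coming from $\gG\succ\gF$ gives a morphism between the maximal objects $M$ and $(\NN^\gF,\aa)$, so by the preceding lemma they are isomorphic and $M$ has family $\gF$. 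Reading the morphism $M\to(\NN^\gG,\cc)$ between $\CM_*$-objects then gives $\gF\succ\gG$; with $\gG\succ\gF$ and antisymmetry of $\succ$ on reduced families (a reduced family is precisely the set of disjoint-union-indecomposables of its closure under disjoint union, hence determined by that closure) this forces $\gG=\gF$, so $\gF$ is $\succ$-maximal among reduced valid families.

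Conversely, assume $\gF$ is reduced and $\succ$-maximal among reduced associated families. By Proposition \ref{MaxProFin} there is a maximal object $M$ with a morphism $M\to(\NN^\gF,\aa)$. By the direction just proved, the family $\gM$ of $M$ is reduced, it is the family of an object of $\CM(n,c)$, and the morphism gives $\gM\succ\gF$; maximality of $\gF$ then yields $\gM=\gF$, so $M\iso(\NN^\gF,\aa)$ is maximal. The only genuinely new ingredient is the nonzerodivisor $x_S-1$ in part (a); I expect the bulk of the care to go into bookkeeping — checking that reduction preserves the minimal-generator property, that the iterated removal lands precisely on $\gF^{\red}$, and that the morphisms furnished by Proposition \ref{MaxProFin} compose and compare in $\CM_*(n,c)$ exactly as the refinement order dictates.
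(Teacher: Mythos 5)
Your argument is correct in substance, but it takes a visibly different route from the paper on part (a), and there is one inference there that you should shore up. The paper kills all of $\gF\setminus\gF^{\red}$ at once: it projects onto the variables indexed by $\gF^{\red}$, identifies the codimension of each quotient combinatorially as the minimal number of sets of the family needed to cover $[n]$ (whence $\codim$ can only go up when passing to $\gF^{\red}\sus\gF$), observes that dividing the CM ring by the $r-t$ elements $x_i-1$ can only make the codimension go down, concludes the two codimensions are equal, and then invokes \cite[Prop.~18.13]{Ei} to get that the $x_i-1$ form a regular sequence and the quotient is again Cohen--Macaulay. You instead strip off one reducible set $S$ at a time and argue directly that $x_S-1$ is a nonzerodivisor because the associated primes of a monomial ideal are generated by variables; that part is fine, and your checks that the images stay a minimal generating system and that the iteration lands exactly on $\gF^{\red}$ are points the paper glosses over. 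The gap is the step ``nonzerodivisor on a CM ring, therefore the dimension drops by one'': as a general implication this is false for non-local CM rings, since a nonzerodivisor can be a unit on the top-dimensional components (e.g.\ $(1,z)$ on $\kr[x,y]\times\kr[z]$ drops the dimension by two). To close it you need either the paper's two-sided codimension comparison via minimal covers, or the observation that $R/I$ is a connected positively graded CM ring, hence equidimensional and catenary, and that $x_S-1$ is a nonunit in $R/I$ (here you must use that $S$ is decomposable, so no generator equals $x_S$ and $x_S$ is not nilpotent mod $I$); with that, every minimal prime over $(x_S-1)$ has coheight exactly $\dim R/I-1$ and the codimension is preserved. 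For part (b) the paper is terse (the morphism $(\NN^t,\bb)\to(\NN^r,\aa)$ forces a maximal $\gF$ to equal $\gF^{\red}$, and the equivalence is then declared), whereas you flesh out both directions using Proposition \ref{MaxProFin}, the lemma that two maximal objects linked by a morphism are isomorphic, and antisymmetry of the refinement order on reduced families via closures under disjoint union; this is consistent with the paper's intended argument and, if anything, more complete.
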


\begin{proof}



a. Let $\gF$ correspond to $(\NN^r, \aa)$. The elements of $\gF$ are indexed by basis elements
$e_i$ of $\NN^r$. Let $\gF \backslash \gF^{\red}$ consist of the sets $S_{t+1}, \ldots S_{r}$
corresponding to $e_{t+1},\ldots, e_{r}$, so $\NN^r = \NN^t \bigoplus \oplus_{i = t+1}^r \NN e_{i}$. 
Then $\gF^{\red}$ corresponds to the monomials  $\bb_i$ we get as the images of 
$\aa_i$ by the projection $\NN^r \pil \NN^t$.
Alternatively the ring $\kr[x_1, \ldots, x_t] / (\xx^{\bb_1}, \ldots, \xx^{\bb_r})$ is obtained from 
$\kr[x_1, \ldots, x_r]/(\xx^{\aa_1}, \ldots, \xx^{\aa_r})$ by dividing out by $x_i - 1$ for 
$i = t+1, \ldots, r$. Now the codimension of the latter ring is the minimal number of sets in $\gF$
covering $[n]$. Similarly the codimension of the former ring is the minimal number of sets in 
$\gF^{\red}$ covering $[n]$. But the codimension of the former ring is greater or equal to that of 
the latter ring since $\gF^{\red} \sus \gF$. Since the latter ring is Cohen-Macaulay, their 
codimensions must in fact be equal, 
and by \cite[Prop.18.13]{Ei}, the first is also Cohen-Macaulay. Thus
$\gF^{\red}$ corresponds to an object in $\CM_*(n,c)$. 

b. Note that we have a morphism $(\NN^t, \bb) \pil (\NN^r, \aa)$ 
since each element of $\gF$ is a disjoint union of elements of $\gF^{\red}$.
Thus if $\gF$ is maximal it must be equal to $\gF^{\red}$. 
Clearly then $(\NN^r, \aa)$ is maximal iff the associated family $\gF$ is maximal
among reduced associated families for the refinement order.
\end{proof}


Now we shall consider some subcategories of $\CM(n,c)$.
First let $X$ be a regular cell complex (see \cite{BH} for definition) of dimension
$d = c-1$, where the vertices
are labeled by elements of $[n] = \{1,2, \ldots, n\}$, i.e. they are ordered.
Let $\CM(X)$ be the subcategory of $\CM(n,c)$ consisting of all objects such that when
the vertices of $X$ are labelled with the monomials in this object, the cellular complex
associated to this monomial labelling gives a minimal free resolution of the ideal 
generated by these monomials. Such a labelling will be called a Cohen-Macaulay (CM)
labelling of $X$.

\medskip

\begin{proposition} In $\CM(X)$ there is a finite set of maximal 
objects. These objects lie in the subcategory $\CM_*(X)$,
which is the intersection of $\CM(X)$ and $\CM_*(n,c)$.
\end{proposition}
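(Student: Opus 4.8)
The plan is to reduce this to Proposition~\ref{MaxProFin} plus Proposition~\ref{MaxProMax}, using the observation that $\CM(X)$ is a full subcategory of $\CM(n,c)$ whose membership is detected by a multigraded condition on the labelling that is preserved under the reduction operation $\gF \mapsto \gF^{\red}$ (equivalently, under dividing out by regular elements of the form $x_i - 1$). Concretely, the first step is to show that the two operations used in the proof of Proposition~\ref{MaxProFin} --- polarisation of a non--square-free monomial, and the collapse $e_p \mapsto e_p + e_q$ when $V_p = V_q$ --- both take objects of $\CM(X)$ to objects of $\CM(X)$, and in the reverse direction (when they apply) produce a genuine morphism in $\Mon(n)$ that cannot be split. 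For this one must recall what the cellular complex attached to a monomial labelling is: its differential is built from the labels $\lcm$ of faces, and these labels, hence the ranks of the free modules and exactness, depend only on the underlying lattice of $\lcm$'s, which is unchanged both by polarisation and by identifying two variables with the same support set $V_p$. So if $X$ gives a minimal cellular resolution for one labelling, it does so for the other; the Cohen-Macaulay and codimension conditions transfer exactly as in Proposition~\ref{MaxProMax}a.

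The second step packages this: define $\CM_*(X) = \CM(X) \cap \CM_*(n,c)$ as stated. By the first step, every object of $\CM(X)$ admits a morphism (compose polarisation with iterated collapses) from an object of $\CM(X)$ lying in $\CM_*(n,c)$, i.e. from an object of $\CM_*(X)$, and none of these morphisms splits unless it is already an isomorphism. Hence any maximal object of $\CM(X)$ must itself lie in $\CM_*(X)$: if $(\NN^r,\aa) \in \CM(X)$ is maximal, the morphism into it from the associated $\CM_*(X)$-object must split, forcing an isomorphism, so $(\NN^r,\aa)$ is already square free with distinct support sets.

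For finiteness, the third step is exactly the counting argument of Proposition~\ref{MaxProFin}: objects of $\CM_*(X)$ are determined up to isomorphism by their associated family $\gF$ of \emph{distinct} subsets of $[n]$, and there are only finitely many families of subsets of $[n]$, so $\CM_*(X)$ --- and a fortiori its subclass of maximal objects --- has only finitely many isomorphism classes. It remains to note that maximality in $\CM(X)$ is inherited correctly: since $\CM(X) \subseteq \CM(n,c)$ is a full subcategory, a morphism in $\CM(X)$ is the same as a morphism in $\CM(n,c)$ between objects that happen to lie in $\CM(X)$, so the splitting condition is literally the same condition; thus the maximal objects of $\CM(X)$ are those objects of $\CM_*(X)$ that are maximal for morphisms within $\CM(X)$, and there are finitely many of them.

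The main obstacle I expect is the first step --- verifying that polarisation and the variable-collapse $e_p\mapsto e_p+e_q$ preserve the property ``$X$ with this labelling is a minimal cellular resolution.'' One direction (the labelling stays CM, by Proposition~\ref{MaxProMax}a's argument with $x_i-1$) is routine; the delicate point is that the \emph{cellular} complex remains a resolution and remains \emph{minimal}. This needs the standard criterion that a labelled cell complex resolves its ideal iff every nonempty subcomplex $X_{\le \bb}$ (the faces whose $\lcm$-label divides $\bb$) is acyclic, together with the fact that the combinatorics of these subcomplexes is governed only by the support sets $V_p$ and not by the multiplicities of the exponents; once phrased this way the invariance under both operations is immediate, but setting it up carefully is where the real content lies.
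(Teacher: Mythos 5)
Your proposal is correct and follows essentially the same route as the paper: the paper's proof of this proposition is simply the remark that it ``goes completely as the proof of Proposition~\ref{MaxProFin}'', i.e.\ polarisation plus the collapse $e_p\mapsto e_p+e_q$ when $V_p=V_q$, followed by the finiteness of families of subsets of $[n]$. The only difference is that you make explicit the verification (via the acyclicity criterion for the subcomplexes $X_{\le\bb}$ and the invariance of their combinatorics under both operations) that these operations stay inside $\CM(X)$, a point the paper leaves implicit.
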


\begin{proof} 
This goes completely as the proof of Proposition \ref{MaxProFin}.
\end{proof}

\rem Another variant is to fix an object $A = (\NN^r, \aa)$ in 
$\CM(n,c)$ and define $\CM(A)$ to be all objects $B$ in $\CM(n,c)$
which has a map $B \pil A$. In \cite{FlVa} we consider the case
when $A$ consists of all square free monomials of degree $d$ in $m$ variables.
This is an object of $\CM(\binom{m}{d}, m-d+1)$. Conjecture
1, in Section 4 in \cite{FlVa} may be formulated as 
saying that every maximal object
over $A$ consist of monomials in $dm-2\binom{d}{2}$ variables or less. 
We showed that
this number of variables may be attained. The ideas implicit in 
this conjecture was a motivating factor for this paper.
Conjeture \ref{PolyConjVar} in the present paper has a similar flavor.
\remfin

To an isomorphism classes of objects in $\CM_*(X)$ there is associated a family $\gF$
of subsets of $[n]$ which determines this isomorphism class. In order for a family of subsets
of $[n]$ to correspond to an object of 
$\CM_*(X)$ some conditions must be fulfilled.

\begin{proposition} \label{MaxPropFamKrit} Let $\dim X = d$.
A family of subsets $\gF$ of $[n]$ corresponds to an object in $\CM_*(X)$ iff the 
following conditions hold.
\begin{itemize}
\item[1.] No $d$ of the subsets in $\gF$ cover $[n]$.
\item[2.] Let $W$ be a union of subsets of $\gF$. Then the restriction of $X$ to the
complement of $W$ is acyclic.
\item[3.] For every pair $F \susneq G$ of (vertices of ) faces of $X$, there is an $S$ in $\gF$ such that
$S \cap F$ is empty, but $S \cap G$ is nonempty.
\end{itemize}
\end{proposition}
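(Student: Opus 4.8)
The plan is to interpret each of the three conditions as the translation of a known homological criterion for a cellular complex to be a minimal resolution of a Cohen--Macaulay quotient, and to verify the equivalence condition-by-condition. Recall the standard setup (see \cite{MiSt}): if the vertices of $X$ are labelled by monomials $\xx^{\aa_1},\dots,\xx^{\aa_n}$, each face $F$ of $X$ acquires the label $\lcm_{i\in F}\xx^{\aa_i}$, equivalently, in the square free case corresponding to a family $\gF=\{V_p\}$, the face $F$ is ``supported in the variable $x_p$'' exactly when $F\cap V_p\neq\emptyset$. The cellular chain complex of $X$, $\ZZ$-graded by these labels, is a complex of free $\kr[x_1,\dots,x_r]$-modules; by the criterion of Bayer--Sturmfels it is a resolution of the ideal iff for every degree $\bfa\in\NN^r$ the subcomplex $X_{\preceq\bfa}$ (the induced subcomplex on vertices whose label divides $\xx^{\bfa}$) is acyclic (has the homology of a point or is empty), and it is a \emph{minimal} resolution iff no two comparable faces share the same label. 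So the three conditions must be shown equivalent to: (i) the complex really is a resolution, i.e.\ acyclicity of all $X_{\preceq\bfa}$; (ii) minimality; (iii) the quotient ring is Cohen--Macaulay of codimension $c=d+1$.

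First I would dispose of the codimension/Cohen--Macaulay part. For a family $\gF$ of subsets of $[n]$, the codimension of $\kr[\xx]/I$ equals the minimal number of sets of $\gF$ needed to cover $[n]$ (this was already used in the proof of Proposition~\ref{MaxProMax}a: covering $[n]$ by sets $V_p$ corresponds to choosing variables $x_p$ so that every generator is hit, i.e.\ to a minimal prime of $I$ of that height), while the length of the cellular resolution is $\dim X+1=d+1=c$. For a cellular resolution the quotient is Cohen--Macaulay of codimension $c$ precisely when the resolution has length exactly $c$, i.e.\ when $\codim = c = d+1$. Since no covering by $\le d$ sets exists iff every cover uses $\ge d+1$ sets iff $\codim\ge d+1$, and the resolution length $d+1$ forces $\codim\le d+1$, Condition~1 is exactly the statement $\codim = d+1$, hence (granting that we have a resolution at all) exactly the Cohen--Macaulay condition for a cellular resolution of the expected length. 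I would be slightly careful here to phrase ``Cohen--Macaulay'' via the Auslander--Buchsbaum formula: $\mathrm{pd} = \codim$ iff CM, and $\mathrm{pd}\le d+1$ automatically, so $\mathrm{pd}=d+1$, equivalently $\codim = d+1$, equivalently Condition~1, is the CM condition.

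Next, Condition~2 should be identified with the acyclicity criterion for being a resolution. The key observation is that the induced subcomplex $X_{\preceq\bfa}$ depends only on which variables $x_p$ satisfy $a_p < $ (the exponent forced by some vertex), and in the square free case a vertex $i$ survives in $X_{\preceq\bfa}$ iff $i\notin V_p$ for every $p$ with $a_p=0$; writing $W=\bigcup_{p:\,a_p=0}V_p$, the surviving vertices are exactly $[n]\setminus W$, so $X_{\preceq\bfa}$ is the restriction of $X$ to the complement of $W$. As $\bfa$ ranges over $\NN^r$, the sets $W$ that arise are exactly the unions of subfamilies of $\gF$ (including the empty union, giving all of $X$, which is acyclic since $X$ is a cell complex). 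Thus ``$X_{\preceq\bfa}$ acyclic for all $\bfa$'' is literally Condition~2. One point to check is that we only need acyclicity, not that the subcomplex is nonempty or a single point --- but since nonempty acyclic (over $\ZZ$) together with the degree-$\mathbf 0$ piece being a point gives exactly the Bayer--Sturmfels condition; and if the complement of $W$ is empty, the ``acyclic'' convention includes that (augmented chain complex exact), so everything is consistent. This step is largely bookkeeping once the correspondence $\bfa\leftrightarrow W$ is set up.

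Finally, Condition~3 must be shown equivalent to minimality, i.e.\ to: for every pair of comparable faces $F\susneq G$ of $X$, $\lcm_{i\in F}\xx^{\aa_i}\neq\lcm_{i\in G}\xx^{\aa_i}$. In terms of $\gF$, $\lcm$ over $F$ is the square free monomial $\prod_{p:\,F\cap V_p\neq\emptyset} x_p$, so the two lcm's differ iff there is a variable $x_p$ dividing the label of $G$ but not of $F$, i.e.\ $V_p\cap G\neq\emptyset$ but $V_p\cap F=\emptyset$ --- which is exactly the set $S$ demanded in Condition~3. So Condition~3 is precisely minimality of the cellular complex. I would note that it suffices to check $F\susneq G$ with $F,G$ faces (rather than arbitrary vertex subsets), which is what the condition says, and that in a regular cell complex it is enough to check the covering relations, though stating it for all comparable pairs does no harm.

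The main obstacle, and the part deserving the most care in writing, is the verification that \emph{all three conditions together} are needed and sufficient, in particular the interplay between Condition~1 and Condition~2. Condition~2 alone guarantees a cellular \emph{resolution}; the CM-of-codimension-$c$ requirement built into the definition of $\CM_*(X)$ then forces the resolution length to equal $d+1$, which via the covering-number description of codimension is Condition~1; conversely if Conditions 1 and 2 hold the cellular complex is a resolution of length $d+1$ of an ideal of codimension $d+1$, hence by Auslander--Buchsbaum the quotient is CM of codimension $c$. I also need to make sure the definition of $\CM_*(X)$ --- square free monomials with distinct $V_p$'s --- is automatically compatible: distinctness of the $V_p$ is not among Conditions 1--3, so strictly the proposition characterizes families in $\CM_*(X)$ \emph{among} families with distinct members, or one restricts attention to such families from the outset; I would state this explicitly at the start of the proof to avoid a gap. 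The rest is a matter of carefully threading the dictionary ``variable $x_p$ $\leftrightarrow$ subset $V_p\in\gF$ $\leftrightarrow$ faces meeting $V_p$'' through the three standard homological facts above.
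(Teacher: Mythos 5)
Your proposal is correct and matches the paper's own argument in essentially every respect: condition 2 is identified with the Bayer--Sturmfels acyclicity criterion via the dictionary $W=\bigcup_{a_p=0}V_p$, condition 3 with minimality through the square free lcm-labels, and condition 1 with the codimension bound combined with Auslander--Buchsbaum to yield Cohen--Macaulayness, which is exactly the structure of the paper's proof and accompanying remark. The extra care you take about distinctness of the $V_p$ and the empty-complement convention is fine but not a departure from the paper's reasoning.
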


\rem Letting $F$ be the empty set and $G$ consist of a single vertex $v$ in 
condition 3., we see that the elements of $\gF$ cover $[n]$.
\remfin

\rem In brief condition 2. shows that $X$ gives a cellular resolution of the ideal, condition 3.
shows the minimality of this resolution, and condition 1.
(together with the fact that the elements of  $\gF$
cover $[n]$)
shows that the ideal has codimension 
$\geq \dim X + 1$. Thus condition 1. and 2. gives that the monomial quotient ring is Cohen-Macaulay.
\remfin

\begin{proof} We first show that condition 2. holds if and only if $X$ gives a cellular resolution of the 
ideal associated to the monomial labelling. The latter is equivalent to the subcomplex 
$X_{\leq \bb}$,
induced on the vertices corresponding 
to monomials $\xx^\aa$ with $\aa \leq \bb$, being acyclic for every $\bb$.

Suppose now condition 2. holds.
Then $X_{\leq \bb}$ is $X$ restricted to the set $U$ of vertices $i$ such that $m_i$
divides $\xx^\bb$. If there is a zero in position $p$ in $\bb$, then clearly $V_p$
is disjoint from $U$. So all such $V_p$ are subsets of the complement $W = \overline{U}$.
But the union of these must be all of $W$, since if $q$ is in $W$
then $m_q$ does not divide $\xx^\bb$ and so there must be some variable $x_p$ in $m_q$
not in $\xx^\bb$, and so $q$ is in $V_p$. Thus $X_{\leq \bb}$ is $X$ restricted to the 
complement of a union of $V_p$'s, and so is acyclic.

Now suppose $X_{\leq \bb}$ is always acyclic.
If $W$ is a union of $V_p$'s, let $\bb$ be the $0,1$-vector with $0$ in 
positions $p$. Then $X$ restricted to the complement of $W$ is $X_{\leq \bb}$, and so acyclic.

\medskip Now consider condition 1. That the monomials in the labelling generate an
ideal $I$ of codimension $\geq \dim X + 1$ is equivalent to there being no $\dim X$ 
variables whose associated vertex sets cover the vertices of $X$.

\medskip Condition 3. gives the condition of minimality of the cellular resolution.
In fact, minimality is equivalent to the fact that for each pair $F \subsetneq G$
the monomial label $\xx^\bb$ associated to $G$ is strictly larger than the 
monomial labelling $\xx^\aa$ associated to $F$. Since we are considering 
square free monomials, some variable $x_p$ must occur in $\xx^\bb$ and not
in $\xx^\aa$. Hence $V_p \cap G$ is nonempty while $V_p \cap F$ is empty.
\end{proof}

An extra condition that must be fulfilled if the family $\gF$ corresponds to 
a maximal object is the following.

\begin{lemma} \label{MaxLemXbet} If a family of subsets $\gF$ of $[n]$ corresponds 
to a maximal object in $\CM_*(X)$, then for every $S$ in $\gF$, 
the restriction of $X$ to $S$ is connected.
\end{lemma}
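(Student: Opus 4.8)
The plan is to argue by contraposition: suppose $S\in\gF$ is such that the restriction $X|_S$ (the induced subcomplex on the vertex set $S$) is disconnected, and produce a morphism into $(\NN^r,\aa)$ that does not split, contradicting maximality. The natural candidate is to ``split'' the variable $x_p$ whose vertex set $V_p$ equals $S$ into two new variables, one for each piece of $X|_S$. Concretely, write $S = S' \sqcup S''$ where $S'$ and $S''$ are unions of connected components of $X|_S$ (hence no face of $X$ meets both $S'$ and $S''$, since a face is connected). Let $\gG$ be the family obtained from $\gF$ by replacing $S$ with the two sets $S'$ and $S''$. The associated object is $(\NN^{r+1},\bb)$, and there is an obvious semigroup homomorphism $\NN^{r+1}\pil\NN^r$ sending $e_{p'},e_{p''}\mapsto e_p$ and fixing the other basis vectors; this takes $\bb$ to $\aa$, i.e. it is a morphism $(\NN^{r+1},\bb)\pil(\NN^r,\aa)$ in $\Mon(n)$. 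A splitting of it would have to be a semigroup map $\NN^r\pil\NN^{r+1}$, and since $r+1 > r$ no such map can be surjective; so if $\gG$ really corresponds to an object of $\CM_*(X)$, maximality of $(\NN^r,\aa)$ is violated.

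So the heart of the argument is to check that $\gG$ satisfies the three conditions of Proposition~\ref{MaxPropFamKrit} (and has distinct subsets, which is immediate since $S',S''$ are new proper subsets of the old $S$ and are nonempty and distinct). Condition 1 (no $d$ subsets cover $[n]$): any cover of $[n]$ by sets of $\gG$ using $S'$ and/or $S''$ can be turned into a cover by sets of $\gF$ by replacing both with $S$, without increasing the count; since $\gF$ satisfies condition 1 and also the elements of $\gF$ cover $[n]$, so must those of $\gG$, and condition 1 carries over. Condition 2 (acyclicity of restrictions to complements of unions of members of $\gG$): a union $W$ of members of $\gG$ that uses $S'$ but not $S''$ (or vice versa) is a new kind of set, so this is where something real must be verified — see below. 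If $W$ uses both $S'$ and $S''$, or neither, then $W$ is also a union of members of $\gF$ and acyclicity is inherited. Condition 3 (minimality): for a pair $F\susneq G$ of faces, $\gF$ provides some $T$ with $T\cap F=\emptyset$, $T\cap G\neq\emptyset$; if $T\neq S$ it still lies in $\gG$; if $T=S$, then since $F$ and $G$ are faces (connected), each of $F$ and $G$ meets $S$ in a union of whole components of $X|_S$... more carefully, $S\cap G\neq\emptyset$ means $G$ meets some component, say contained in $S'$, and $S\cap F=\emptyset$ means $F$ meets neither, so $S'$ works. This uses exactly that faces are connected, which is why we grouped components rather than split arbitrarily.

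The main obstacle is condition 2 for a union $W$ containing $S'$ but not $S''$. Here one wants: $X$ restricted to the complement of $W$ is acyclic. The complement of $W$ differs from the complement of $W' := (W\setminus S')\cup S$ (a union of members of $\gF$, whose complement restriction is acyclic by hypothesis on $\gF$) by the addition of the vertices in $S'' \setminus (\text{other members of }W)$. Equivalently, writing $Z$ for the complement of $W$ in the vertex set, $Z$ is the disjoint union (at the level of the subcomplex) of the complement-of-$W'$ part and the part of $S''$ not otherwise covered — and the key point is that no face of $X$ straddles these two pieces, because any face meeting $S''$ is a face meeting $S$, hence (being connected) contained in a single component of $X|_S$, so it meets $S''$ only if it is disjoint from $S'$, i.e. it already lives in $X|_Z$'s ``$S''$ side.'' Thus $X|_Z$ is a disjoint union of two subcomplexes, one of which ($X|_{\text{complement of }W'}$) is acyclic and nonempty, and I must argue the whole thing is acyclic. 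This forces me to also observe the $S''$-side is acyclic, which follows by the same trick applied with the roles of $S'$ and $S''$ reversed, together with the observation that a disjoint union of two nonempty acyclic complexes is \emph{not} acyclic (its $\tilde H_0\neq 0$) — so in fact the clean statement is that one of the two sides must be empty, forcing $W$ to already cover one of $S',S''$ entirely. The correct formulation of the argument, then, is: if $X|_S$ is disconnected, pick the component decomposition so that this empty-ness obstruction cannot arise, i.e. the real content is that disconnectedness of $X|_S$ lets us choose $S'$ (a single component) with $X$ restricted to the complement of $S'$ alone disconnected into the $S''$-part and the rest, contradicting condition 2 of Proposition~\ref{MaxPropFamKrit} applied to the union $W=S'$ in $\gF$ itself. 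In other words, the cleanest route may bypass constructing $\gG$: condition 2 for $\gF$ with $W=S$ says $X$ restricted to $[n]\setminus S$ is acyclic, and if $X|_S$ were disconnected one shows directly (via Mayer–Vietoris, decomposing $X$ along $X|_S$ and using that $X|_S$ separates) that some proper restriction is non-acyclic — I will choose whichever of these two presentations is shorter once the Mayer–Vietoris bookkeeping is written out.
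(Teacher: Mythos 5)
Your overall strategy matches the paper's: refine $S$ along the components of $X_{|S}$, check that the refined family still satisfies the conditions of Proposition \ref{MaxPropFamKrit}, and contradict maximality (your non-split morphism $(\NN^{r+1},\bb)\pil(\NN^r,\aa)$ is the same contradiction the paper phrases via the refinement order of Proposition \ref{MaxProMax}); your handling of conditions 1 and 3 is fine. The genuine gap is condition 2, which you rightly identify as the crux but do not establish. First, your ``no face straddles'' claim is unjustified and in general false: a face of $X$ can have vertices in $S''$ and vertices outside $S$; connectivity of faces only controls faces \emph{all} of whose vertices lie in $S$, so $X$ restricted to the complement $Z$ of $W$ need not split off the uncovered part of $S''$ as a disjoint piece. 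Second, as you notice yourself, even if it did split, a disjoint union of two nonempty acyclic complexes is never acyclic, so this decomposition cannot prove condition 2 — and both of your proposed repairs fail. The restriction of $X$ to the complement of a single component $S'$ need not be disconnected (and $S'$ is not a member of $\gF$, so condition 2 says nothing about that complement). And no maximality-free ``direct'' contradiction can exist, because there are non-maximal objects of $\CM_*(X)$ in which some $V_p$ is disconnected: for the path on three vertices labelled $ca, ab, bc$ one has $V_c=\{1,3\}$, which is exactly the paper's ``loosened up'' example from Section 1 and gives a perfectly good minimal cellular resolution.

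The paper closes this step by applying Mayer--Vietoris to a \emph{larger} acyclic complex rather than decomposing the complement of $W$. In your notation: let $T$ be the complement of the union of the members of $\gG$ occurring in $W$ other than $S'$ (these all lie in $\gF$), so $Y=X_{|T}$ is acyclic; put $Y_1=X_{|T\cap\overline{S'}}$ (the restriction whose acyclicity is wanted, $\overline{S'}$ denoting the complement of $S'$) and $Y_2=X_{|T\cap\overline{S''}}$, so that $Y_1\cap Y_2=X_{|T\cap\overline{S}}$ is acyclic, being the restriction to the complement of a union of members of $\gF$. With $Y=Y_1\cup Y_2$ (here the choice of $S',S''$ as unions of components of $X_{|S}$ is used), the reduced Mayer--Vietoris sequence gives $\tilde{H}_i(Y_1)\oplus\tilde{H}_i(Y_2)\cong 0$ for all $i$, hence $Y_1$ is acyclic, which is condition 2 for $W$. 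Until your condition-2 step is replaced by an argument of this kind, the proof is incomplete.
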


\begin{proof} Suppose $X$ restricted to $S$ is not connected, and let $S$ be $S_1 \cup S_2$
such that $X_{|S}$ is the disjoint union of $X_{|S_1}$ and $X_{|S_2}$.
We want to show that $\gF^\prime = \gF \cup \{S_1, S_2\}$ fulfils the criteria of 
Proposition \ref{MaxPropFamKrit}. 
But then $\gF^{\prime \red}$ would give us a 
larger family of subsets for the refinement order, 
contradicting the fact that $\gF$ corresponds to a maximal object.

The criteria 1. and 3. hold for $\gF^\prime$ given that they hold for $\gF$. We must show
that 2. holds. Let $\gG$ be the set of complements of sets in $\gF$, 
and let $T$ be the intersection
of elements in a subfamily of $\gG$. Let $T_1$ and $T_2$ be the complements
$\overline{S_1}$ and
$\overline{S_2}$ respectively. We know that $X$ restricted to $T$ and to $T \cap T_1 \cap T_2$
are acyclic. We must show that $X$ restricted to $T \cap T_1 $ and to $T \cap T_2$ is
acyclic. This follows from the following.

\begin{claim} Suppose $Y_1$ and $Y_2$ are open subsets of $Y$ such that $Y = Y_1 \cup Y_2$ 
and $Y_1 \cap Y_2$ are acyclic. Then $Y_1$ and $Y_2$ are acyclic.
\end{claim}

This claim follows form the Mayer-Vietoris sequence.
\end{proof}

We also have the following property of a maximal family.

\begin{proposition} \label{MaxPropT}
Let $\gF$ be a maximal family in $CM_*(X)$. Let $t\in T \in \gF$.
Then there exists $S_1, \ldots, S_{\dim X}$ in $\gF$ such that

1. $T \cup \cup_{i=1}^{\dim X} S_i$ covers $X$,

2. $t$ is not in any $S_i$.
\end{proposition}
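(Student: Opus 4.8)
The plan is to argue by contradiction, exploiting the failure of condition~1 of Proposition~\ref{MaxPropFamKrit} for a hypothetical augmented family. Fix $t\in T\in\gF$. Suppose no choice of $S_1,\dots,S_{\dim X}$ in $\gF$ satisfies (1) and (2) simultaneously; equivalently, among the sets of $\gF$ not containing $t$, no $\dim X$ of them together with $T$ cover $X$. The idea is then to enlarge $\gF$ by adjoining a new set and derive a contradiction with maximality, as in the proof of Lemma~\ref{MaxLemXbet}. Specifically, consider the set $T' = T\setminus\{t\}$ (or, more carefully, the subset of $T$ consisting of vertices of $X$ other than the vertex labelled $t$), and the family $\gF' = \gF\cup\{T'\}$. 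If I can show $\gF'$ corresponds to an object of $\CM_*(X)$, then $\gF'{}^{\red}$ is strictly larger than $\gF$ in the refinement order (note $T'\subsetneq T$ is not a disjoint union of elements of $\gF$, since any element of $\gF$ inside $T$ either equals $T$ — impossible as it omits $t$ only — or is a proper subset whose complement-in-$T$ behaviour I must control), contradicting maximality of $\gF$.

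The key step is verifying the three conditions of Proposition~\ref{MaxPropFamKrit} for $\gF'$. Conditions~1 and~3 are the easy ones: condition~3 is monotone under enlarging $\gF$, so it survives; condition~1 for $\gF'$ says no $\dim X$ sets of $\gF'$ cover $[n]$, and the only new sets available are $T'$, which is smaller than $T\in\gF$, so any cover using $T'$ yields a cover of the same size using $T$ — ruled out because $\gF$ satisfies condition~1. Wait — I must be careful that $T'$ together with $\dim X - 1$ old sets does not cover, using the negation hypothesis: this is exactly where the assumption ``no $S_1,\dots,S_{\dim X}$ satisfy (1),(2)'' should be fed in, since a covering family $T', S_1,\dots, S_{\dim X -1}$ would give $T\cup S_1\cup\cdots$ covering with $\dim X -1 < \dim X$ sets, violating condition~1 for $\gF$ directly. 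So condition~1 is actually automatic and does \emph{not} use the negation hypothesis; the negation hypothesis must instead be what forces $\gF'$ to still satisfy condition~1 when we try to \emph{replace} — let me restructure: the cleanest route is to observe that the negation of the conclusion is precisely the statement that $T' = T\setminus\{t\}$ can be added without destroying condition~1, and then condition~2 becomes the real content.

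Condition~2 — acyclicity of $X$ restricted to the complement of any union of sets from $\gF'$ — is the main obstacle, exactly as in Lemma~\ref{MaxLemXbet}. A union involving the new set $T'$ has complement $(\overline{W})\cap\overline{T'} = (\overline{W})\cap(\overline{T}\cup\{t\})$ where $W$ is a union of old sets; so the complement is either $\overline{W}\cap\overline{T}$ (already acyclic, since $W\cup T$ is a union of old sets) or $\overline{W}\cap\overline{T}$ with the single vertex $t$ possibly re-adjoined — and I must show adjoining back the vertex $t$ to an acyclic restriction keeps it acyclic. This will need a Mayer–Vietoris argument like the Claim in Lemma~\ref{MaxLemXbet}, using that $t\in T$ and hence $t\notin \overline{T}$: the point is that the star of $t$ relative to the restricted complex must meet it in something acyclic, which should follow from combining condition~2 for various unions in $\gF$ (in particular, I expect to need that $X$ restricted to $\overline{W}\cup\{t\}$ and to $\overline{W}$ are both acyclic, forcing $t$'s link to be acyclic, then run Mayer–Vietoris to conclude acyclicity of $\overline{W}\cap\overline{T}$ with $t$ added). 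Pinning down this local acyclicity at the vertex $t$ — and checking I have the right combination of hypotheses from $\gF$'s membership in $\CM_*(X)$ to feed the Mayer–Vietoris sequence — is where the work lies; everything else is bookkeeping with the refinement order and Proposition~\ref{MaxProMax}.
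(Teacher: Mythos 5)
There is a genuine gap, and it sits exactly where you say the work lies: you never verify condition 2 of Proposition \ref{MaxPropFamKrit} for the augmented family $\gF\cup\{T\setminus\{t\}\}$, and there is no mechanism in your setup for the negation hypothesis to enter that verification. Note the warning sign you yourself uncovered: conditions 1 and 3 for the augmented family hold automatically (any cover using $T\setminus\{t\}$ gives one of the same size using $T$), so if the acyclicity of the restrictions to complements of unions involving $T\setminus\{t\}$ were also automatic, your argument would show that $T\setminus\{t\}$ can \emph{always} be adjoined, contradicting maximality of $\gF$ even when the conclusion of the proposition holds --- which is absurd. So the acyclicity step must fail in general, precisely in the situation where a union $W$ of sets of $\gF$ avoiding $t$ has $W\cup T$ containing all neighbours of $t$ but not all of $V$ (then $t$ is an isolated vertex of $X$ restricted to $\overline{W\cup T}\cup\{t\}$ and the restriction is disconnected), and the hypothesis ``no $\dim X$ sets avoiding $t$ cover $V$ together with $T$'' gives you no leverage against unions of arbitrarily many sets. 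In addition, even granting condition 2, your contradiction needs $T\setminus\{t\}$ not to be a disjoint union of elements of $\gF$, which you flag but do not control.

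For contrast, the paper proves the statement directly, without contradiction and essentially without invoking maximality beyond membership in $\CM_*(X)$: applying condition 3 to the pairs $\{t\}\susneq G$ for edges $G$ at $t$ produces sets $S_1,\ldots,S_r\in\gF$ avoiding $t$ and covering all neighbours of $t$; acyclicity (hence connectedness) of the complement of $S_1\cup\cdots\cup S_r$, in which $t$ is isolated, forces that complement to be exactly $\{t\}$, so the $S_i$ cover $V\setminus\{t\}$. Then the subfamily $\{T,S_1,\ldots,S_r\}$ still satisfies the acyclicity condition, so it supports a cellular resolution on $X$ of length $\dim X+1$; by Auslander--Buchsbaum the associated quotient ring has codimension at most $\dim X+1$, i.e.\ some $\dim X+1$ members of this subfamily cover $V$, and such a cover must contain $T$ since only $T$ contains $t$. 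If you want to salvage a proof, this covering-number/codimension argument on a well-chosen subfamily is the missing idea; the augmentation-by-$T\setminus\{t\}$ route does not close.
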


\begin{proof}
By Proposition \ref{MaxPropFamKrit}.3 
there are $S_1, \ldots, S_r$ in $\gF$ whose union contains all the
neighbour vertices of $t$, but not $t$ itself.

Note that since the complement of $S_1 \cup \ldots \cup S_r$ must be
connected, this complement is simply $\{t\}$ and so $S_1, \ldots, S_r$
cover $X\backslash \{ t \}$. 

Let $\gG$ be the family consisting of the $S_i$ and $T$. Then $X$ restricted
to every complement of a union of elements of $\gG$ is acyclic. Hence
the associated monomial labelling of $X$ gives a cellular resolution of the
associated ideal. By the Auslander-Buchsbaum theorem, the corresponding quotient ring then 
has codimension $\leq \dim X + 1$. Therefore one 
must be able to cover $X$ with $\dim X + 1$ subsets in the family $\gG$,
and this cover must contain $T$ since only $T$ contains $t \in V$.
\end{proof}

We let $\CM_\dagger(X)$ be the subcategory of $\CM_*(X)$ such that the 
associated family $\gF$
also fulfils the condition of Lemma \ref{MaxLemXbet}. Then all maximal 
monomial labellings of $X$ lie in this subcategory.

%




\section{Cellular resolutions of projective dimension 2}

Let an ordered set of monomials generate an ideal $I$ such that
$S/I$ is Cohen-Macaulay of codimension two. A minimal cellular
resolution of $S/I$ must then be an acyclic graph, a tree. We first show that
such a cellular resolution exists, describing in principle all
such graphs.

\subsection{Existence of cellular resolution}
 Let $m_1, \ldots, m_n$ be the monomials, and let $K$ be the complete graph
whose vertices are $[n]$. Label vertex $i$ with $m_i$ and the edge $\{i,j\}$
with $\lcm(m_i,m_j)$.
For $d \in \NN$, let $K_{\leq d}$ be the subgraph of $K$, consisting of all
vertices and edges labelled with a monomial of total degree $\leq d$.
Let $\{ F_i\}$ be a sequence of subgraphs of $K$ such that the following holds.
\begin{itemize}
\item[i.] $F_i \sus F_j$ for $i \leq j$, 
\item[ii.] $F_i$ is a spanning forest for $K_{\leq i}$. 
\end{itemize}
As soon as $K_{\leq d}$ contains all vertices and is connected, $F_i$ will be $F_d$ for
$i \geq d$ and $F_d$ is a spanning tree $T$ for $K$.

\begin{proposition} \label{TreProEks} Let $m_1, \ldots, m_n$ be generators of $I$ such that
$S/I$ is Cohen-Macaulay of codimension two.

a. The labelled tree $T$ constructed above gives a minimal cellular resolution for $I$.

b. If a tree $T$ labelled by the monomial $m_1, \ldots, m_n$ gives a 
minimal cellular resolution of $S/I$, then $T$ may be obtained by the construction above.

\end{proposition}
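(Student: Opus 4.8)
The plan is to prove part (a) by verifying that the tree $T$ built by the construction satisfies the criteria for a cellular resolution, exploiting the hypothesis that $S/I$ has codimension two, and then to prove part (b) by a reverse-engineering argument showing that any minimal cellular tree must arise from a choice of spanning forests as in (i)--(ii).

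For part (a), recall that a labelled complex supports a cellular resolution of $I$ precisely when, for each degree vector $\bb$, the induced subcomplex on the vertices and faces with label $\leq \bb$ is acyclic; since we are dealing with a graph this means the subgraph $T_{\leq \bb}$ is a forest that is in fact connected, i.e.\ a tree, on the relevant vertex set, or more precisely that $T_{\leq \bb}$ has no cycles and the right number of components. First I would reduce to the degrees $d$ occurring as total degrees of the lcm-labels, since between consecutive such values nothing changes. For such a $d$, the key point is to relate $T_{\leq d}$ to $K_{\leq d}$: I claim $T_{\leq d}$ has the \emph{same connected components, on the same vertex sets}, as $K_{\leq d}$, because $F_d$ (which equals $T \cap K_{\leq d}$ by property (i) and the fact that $T = F_N$ for large $N$) is a spanning forest of $K_{\leq d}$ by (ii). Hence $T_{\leq d}$ is a disjoint union of trees, in particular acyclic, which gives the resolution property. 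Minimality is immediate because along every edge $\{i,j\}$ of $T$ the label $\lcm(m_i,m_j)$ strictly dominates each of $m_i, m_j$ (the $m_i$ are distinct minimal generators, so no two are equal and neither divides the other). The remaining thing to check is that this resolution has the right \emph{length}, i.e.\ that $T$ being one-dimensional matches $\mathrm{pd}(S/I) = 2$: here is where codimension two enters, via Auslander--Buchsbaum, $\mathrm{pd}(S/I) = \mathrm{depth}\,S - \mathrm{depth}(S/I) = \mathrm{codim}(I) = 2$ using Cohen--Macaulayness; so a cellular resolution of the right length must be supported on a graph, and the acyclic connected graph $T$ (connected because $K$ is connected and $F_d = T$ is spanning) has exactly the homological dimension $1$ needed, with $\#\text{edges} = n-1$ Betti numbers in homological degree $2$ — consistent with the resolution being minimal and of the correct length. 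I would phrase this as: the cellular complex of $T$ is a complex of length $2$, it is acyclic in positive degrees and resolves $I$ by the above, so it is a (minimal) free resolution; since $\mathrm{pd}(S/I)=2$ it is not too short, and being a tree it is not too long.

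For part (b), suppose $T$ is a tree labelled by $m_1,\dots,m_n$ giving a minimal cellular resolution. Define $F_d := T \cap K_{\leq d}$, the subgraph of $T$ on all vertices and edges whose (lcm-)label has total degree $\leq d$ — note that the label an edge $\{i,j\}$ of $T$ carries \emph{in the cellular resolution} must be $\lcm(m_i,m_j)$, since in a cellular resolution the label of a face is the lcm of the labels of its vertices; so $F_d$ is exactly $T_{\leq d}$ and property (i) is automatic. For property (ii) I must show $F_d$ is a spanning forest of $K_{\leq d}$: it is a forest since it is a subgraph of the tree $T$, it spans all vertices of $K_{\leq d}$ by construction, and the essential point is that it has no "redundant" missing connections, i.e.\ that $F_d$ has the same components as $K_{\leq d}$. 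This is precisely the acyclicity (here: each $T_{\leq \bb}$ is connected on its support) that the cellular resolution property forces: two vertices $i,j$ lie in the same component of $K_{\leq d}$ iff there is a path in $K$ through vertices/edges of degree $\leq d$, iff $m_i, m_j$ both divide some monomial of degree $\leq d$ reachable by a chain — and the resolution property applied to the relevant degree vector says $T_{\leq \bb}$ is connected on exactly that vertex set, so $i,j$ are joined already in $F_d$. I would spell this out by taking $\bb = $ the lcm of a component of $K_{\leq d}$ (or iterating over components) and invoking acyclicity of $T_{\leq \bb}$.

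The main obstacle I anticipate is the bookkeeping in part (b) matching "same connected components of $K_{\leq d}$ and of $T_{\leq d}$" cleanly to the degree-vector formulation of acyclicity — total degree $\leq d$ is a coarser condition than a fixed multidegree $\bb$, so one has to argue that within a single component of $K_{\leq d}$ the relevant multidegrees assemble correctly, or else run the argument component-by-component using the lcm of each component as the test multidegree. A secondary subtlety is making sure, in part (a), that the construction indeed terminates with a spanning \emph{tree} (not just forest) of $K$ — this needs $K$ itself to be connected, which follows because $S/I$ has a resolution of length $2$, forcing $\lcm(m_1,\dots,m_n)$-type chains to connect everything, or more simply because if $K$ were disconnected the monomials would split the ideal into a sum over disjoint variable sets, contradicting $\mathrm{codim} = 2 < \infty$ together with having a connected cellular resolution; I would state this as a short preliminary remark.
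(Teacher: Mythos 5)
The central step of your part (a) --- the verification of the acyclicity criterion --- does not hold as written. What must be checked is that for \emph{every} multidegree $\bb\in\NN^r$ the subcomplex $T_{\leq \bb}$, consisting of the vertices and edges whose lcm-label divides $\xx^{\bb}$, has vanishing reduced homology; in particular every nonempty $T_{\leq \bb}$ must be connected. Your reduction to total-degree truncations $T_{\leq d}$ is not legitimate: $T_{\leq \bb}$ is in general a proper subcomplex of $T_{\leq \deg \bb}$, and ``same components as $K_{\leq d}$'' plus ``forest'' gives neither connectivity of $T_{\leq \bb}$ nor topological acyclicity (a disconnected forest has $\widetilde{H}_0\neq 0$). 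This is precisely where the Cohen--Macaulay codimension-two hypothesis must do real work, whereas in your argument it enters only through Auslander--Buchsbaum to check the ``length'' of the complex, which says nothing about exactness. Concretely, for $I=(x,y,z)$ the construction (i)--(ii) produces for instance the path $x$--$y$--$z$, and every step of your argument for (a) goes through verbatim; yet this tree is not a resolution, since $T_{\leq(1,0,1)}$ consists of the two isolated vertices labelled $x$ and $z$. So the argument would prove a false statement once the hypothesis is dropped, showing the hypothesis is not being used where it is needed. The paper avoids the topological criterion entirely: it works with the first syzygy module $\gK$, which is generated by the Taylor syzygies $\sigma_{i,j}$, shows that the edges of the forest $F_e$ map to independent syzygies, and uses that $I$ has projective dimension one to force a unit coefficient in the cycle relation created by any non-forest edge, so that in each degree the forest edges already generate $\gK$; iterating over degrees gives an isomorphism onto $\gK$ and hence the minimal resolution.

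Your part (b) is closer to correct, but the test multidegree you propose --- the lcm of a whole component of $K_{\leq d}$ --- fails for the reason you yourself anticipate: the path supplied by connectivity of $T_{\leq\bb}$ then only has labels dividing that (possibly very large) lcm, so its edges need not have total degree $\leq d$ and need not lie in $T_{\leq d}$. The repair is to test edge by edge: for an edge $\{i,j\}$ of $K_{\leq d}$ take $\bb$ with $\xx^{\bb}=\lcm(m_i,m_j)$; the connecting path in $T_{\leq\bb}$ then has all labels dividing $\lcm(m_i,m_j)$, hence of total degree $\leq d$, which shows $T_{\leq d}$ has the same components as $K_{\leq d}$ and so is a spanning forest. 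The paper instead dispenses with topology here: by uniqueness of graded Betti numbers, the number of edges of $T$ with label of degree $\leq i$ equals the number of edges of $F_i$ from part (a), and a forest inside $K_{\leq i}$ with as many edges as a spanning forest must itself be spanning.
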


\begin{proof}
a. Let 
\[ \oplus_{i =1}^n Se_i \mto{d} S \pil S/I \]
be the start of the minimal resolution and let $\gK$ be the first syzygy module,
the kernel of $d$.

Since the Taylor complex, the cellular complex associated to the $n-1$-simplex
labelled by $m_1, \ldots, m_n$,
gives a resolution of $S/I$, the first syzygy module will be generated by 
\[ \sigma_{i,j} = \frac{\lcm(m_i,m_j)}{m_j} e_i - \frac{\lcm(m_i,m_j)}{m_i}e_j. \]
Each such syzygy corresponds to en edge in the complete graph $K$.

Let $e$ be the least integer for which $K_{\leq e}$ contains an edge. The edges in $F_e$
give an injective map
\[ \oplus_{\{i,j\} \in F_e} Se_{i,j} \pil  \gK \]
 It is injective because $F_e$ does not have homology in 
(homological) degree $1$. 
Also the syzygies corresponding to the edges in $F_e$ generate $\gK_{\leq e}$.
To see this let $\sigma_{i,j}$ be a syzygy, associated to an edge $\{i,j\}$ not in $F_e$, 
then $F_e \cup \{i,j\}$
will contain a cycle $i_1 \pil i_2 \pil \cdots \pil i_r \pil i_1$. Considering
$S(-e)^{r} \pil \gK$ defined by these edges and letting $M$ be the least common multiple 
of the $m_{i_j}$, we get a minimal syzygy
\[ \sum_j M/{\lcm(m_{i_j},m_{i_{j+1}})} \sigma_{i_j,i_{j+1}} = 0 \]
where the coefficient of $\sigma_{i,j}$ is nonzero.
Since $I$ has projective dimension one, some coefficient must be constant here, and since
all $\sigma_{i_j,i_{j+1}}$ have degree $e$, all coefficients must be constant. Hence 
$\sigma_{i,j}$ is a linear combination of syzygies corresponding to edges in $F_e$.

\medskip
Now we get further a map $\oplus_{\{i,j\} \in F_{e+1}} Se_{i,j} \pil \gK$. Again this map
is injective. We may also argue as above that it is surjective on $\gK_{\leq e+1}$: 
If $\{i,j\}$ is an edge in $K_{\leq e+1}$ not in $F_{e+1}$ or $K_{\leq e}$, 
then adjoining it to $F_{e+1}$ 
we get again a cycle and a syzygy.  
This must have degree $e+1$ and since $\sigma_{i,j}$ has degree $e+1$ the coefficient
of $\sigma_{i,j}$ must be a constant and so it is a linear combination of the other syzygies. 
In this way we may continue and get
that $\oplus_{\{i,j\} \in T} Se_{i,j} \pil \gK$ is injective and surjective and so an 
isomorphism.

\medskip

b. Let $T$ be a tree labelled by $m_1, \ldots, m_n$ giving a minimal cellular resolution of $S/I$. We will show
that $T_{\leq i}$ is a spanning forest for $K_{\leq i}$.

All edges of $T_{\leq i}$ are contained in $K_{\leq i}$. The uniqueness of graded Betti numbers
in a minimal free resolution, implies that the cardinality of $T_{\leq i}$ equals
the cardinality of $F_{\leq i}$ in the resolution constructed in a. Hence $T_{\leq i}$
must be a spanning forest for $K_{\leq i}$.
\end{proof}

\eks The ideal $(x^n, x^{n-1}y, \ldots, y^n)$ is of codimension two with Cohen-Macaulay
quotient ring. There is a unique tree giving a cellular resolution of this ideal,
namely the linear graph on $n+1$ vertices.
\eksfin

On the opposite side of the spectrum one has the following.

\eks \llabel{TreEksStreng}
Let $x_1, \ldots, x_n$ be the variables. For $i = 1, \ldots, n$ let $m_i$ be the monomial 
$\Pi_{p \neq i} x_p$.
These generate an ideal of codimension two whose quotient ring is Cohen-Macaulay. By
the construction in the theorem, any tree $T$ with vertices $[n]$, gives a cellular resolution
of the ideal $I$.
\eksfin

\subsection{Maximal CM monomial labellings}
Now given a tree $T$ with vertex set $[n]$, we shall show that, up to isomorphism, there is 
a unique maximal monomial labelling in $\CM(T)$. Let us describe this.

\vskip 3mm 
\noindent {\bf Maximal monomial labelling of T.}
Orientate $T$, i.e. give each
edge an orientation. Each edge $s \mto{e} t$ disconnects the tree into two parts.
For each edge $e$ associate a variable $x_e$ to all nodes in the connected component
of $s$ and $y_e$ to the connected component of $t$. 
To each node $v$ in $T$ there will now be a map $\{ \text{edges in } T \} \mto{v} \{x,y\}$.
Here $v(e) = x$ if $x_e$ is associated to $v$ and correspondingly for $y$.
We label the vertex $v$ with the product of all the variables associated to 
$v$, i.e. with  
\begin{equation} M_v = \underset{v(e) = x}{\Pi} x_e \times \underset{v(e) = y}{\Pi} y_e
\label{TreLabMv}
\end{equation}

\medskip

\begin{theorem} \label{TreTheMax} Let $T$ be a tree on the vertices $[n]$. In $\CM(T)$ there is, 
up to isomorphism, a unique maximal object $M$ given by the monomial labelling (\ref{TreLabMv}). 
Moreover, for every monomial labelling $L$ in $\CM(T)$ there is a unique morphism $M \pil L$
to this object from the maximal object.
\end{theorem}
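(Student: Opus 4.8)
The plan is to verify directly that the labelling $M$ of (\ref{TreLabMv}) lies in $\CM(T)$ by checking the three conditions of Proposition \ref{MaxPropFamKrit} for its associated family $\gF$, and then to show it is maximal by exhibiting, for an arbitrary $L \in \CM(T)$, a morphism $M \to L$ and arguing it must be a split surjection. So first I would describe the family $\gF$ of subsets of $[n]$ attached to $M$: each oriented edge $e$ of $T$ contributes two sets, namely the vertex set $S_e^x$ of the connected component of the source and the vertex set $S_e^y$ of the connected component of the target (so $S_e^x$ and $S_e^y$ partition $[n]$, and $\gF$ has $2(n-1)$ sets, all of which restrict to connected subgraphs of $T$, as required by Lemma \ref{MaxLemXbet}). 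For condition 1, since $\dim T = 1$ I only need that no single set in $\gF$ is all of $[n]$, which is clear as each $S_e^x, S_e^y$ is a proper nonempty subset when $n \geq 2$. For condition 3, given $F \subsetneq G$ with $F, G$ vertex sets of faces of $T$ (so $G$ is an edge $\{s,t\}$ and $F \subseteq \{s,t\}$ is a proper subset, e.g.\ $F = \{s\}$ or $F = \emptyset$), I pick the edge $e = \{s,t\}$ itself: $S_e^y$ contains $t$ but not $s$, so it meets $G$ but is disjoint from $F = \{s\}$; the cases $F = \emptyset$, $F = \{t\}$ are symmetric. For condition 2, given $W$ a union of sets of $\gF$, I must show $T$ restricted to $[n] \setminus W$ is acyclic; since any subgraph of a tree is a forest, it has no higher homology, so I only need it to be connected (nonempty case) — and removing a union of ``half-trees'' $S_e^x$ or $S_e^y$ from $T$ leaves, at worst, a subtree: concretely, if $W$ is a union of such half-trees then its complement is the intersection of the complementary half-trees, and an intersection of subtrees of a tree is again a subtree (or empty), hence acyclic. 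This establishes $M \in \CM_*(T)$, and in fact $M \in \CM_\dagger(T)$.

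Next I would check that $\gF$ is \emph{reduced}, i.e.\ no $S \in \gF$ is a disjoint union of other members of $\gF$. A half-tree $S_e^x$ decomposes within $T$ only along edges strictly inside that component; but the other members of $\gF$ that are contained in $S_e^x$ are precisely the half-trees of edges interior to that component, and no disjoint union of them can recover $S_e^x$ because the edge $e$ borders $S_e^x$ — the vertex of $S_e^x$ incident to $e$ lies in a unique maximal sub-half-tree that still abuts $e$, so any partition of $S_e^x$ into members of $\gF$ would have to leave that boundary vertex in a piece whose complementary side crosses $e$, which is not a subset of $S_e^x$. (This is the one spot I'd want to write carefully; it is the combinatorial heart of why $M$ is maximal rather than merely a CM labelling.) Granting reducedness, maximality among reduced families for the refinement order will follow from the description of morphisms in $\CM_*(X)$: any object mapping to $M$ corresponds to a refinement $\gF' \succ \gF$, and I must show $\gF'^{\red} = \gF$. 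Proposition \ref{MaxPropT} is the key input here: for each $t \in T \in \gF$ it forces the existence of $S_1$ (here $\dim T = 1$, so just one set) in the family with $T \cup S_1 = [n]$ and $t \notin S_1$; applied to the vertex-labelling combinatorics of a refinement this pins down that every set of a refined family is either one of the $S_e^x, S_e^y$ or splits off along an edge of $T$, forcing the reduction back to $\gF$.

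For the uniqueness of the morphism $M \to L$ and the fact that $M$ maps onto every $L \in \CM(T)$: given $L$ with square-free loosening (Proposition \ref{MaxProFin}) corresponding to a family $\gG$ on $[n]$, each variable $y_q$ of $L$ has support $V_q \subseteq [n]$, and condition 2 for $\gG$ says $T$ restricted to $[n] \setminus V_q$ is acyclic, hence (being a forest) a disjoint union of subtrees; moreover condition 3 forces $V_q$ itself, when nonempty, to ``separate'' along edges. The point is that $V_q$ must be a union of half-trees $S_e^y$: walking along any edge $e = \{s,t\}$ of $T$, either both or neither endpoint lies in $V_q$ unless $V_q$ exactly contains one side — more precisely, the boundary edges of $V_q$ in $T$ determine $V_q$ as an intersection of half-trees, and acyclicity of both $V_q$ and its complement forces each to be a genuine half-tree union. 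This gives a semigroup homomorphism $\NN^{2(n-1)} \to \NN^s$ sending the basis vector for $(e, x)$ or $(e, y)$ to the sum of those $y_q$-generators whose support is the corresponding half-tree, and one checks it sends $M_v \mapsto $ (the square-free loosening of) the label of $v$ in $L$; uniqueness is forced because the half-trees $S_e^x, S_e^y$ are exactly the ``join-irreducible'' separators of $T$, so the preimage of each $V_q$ under the refinement is determined. The main obstacle I anticipate is exactly this structural claim — that in any CM labelling of a tree the support sets $V_q$ are forced to be unions of half-trees — which requires combining the acyclicity (condition 2) and minimality (condition 3) conditions carefully; once that is in hand, surjectivity, the splitting, and uniqueness of the morphism are formal consequences of the earlier propositions.
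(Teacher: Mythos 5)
Your overall skeleton (verify the labelling (\ref{TreLabMv}) satisfies Proposition \ref{MaxPropFamKrit}, show the family of half-trees is reduced, then map onto every $L$) is reasonable, and the first of these steps is correct and even makes explicit something the paper leaves implicit. But the heart of the theorem is the existence and uniqueness of the morphism $M\pil L$, and there your proposal has genuine gaps and one outright error. The explicit homomorphism you define is wrong as stated: sending the variable of a half-tree $S$ to the sum of basis vectors of those target variables whose support \emph{equals} $S$ fails already for the path with edges $\{1,2\},\{2,3\}$ labelled as in Example \ref{TreEksStreng} ($m_1=x_2x_3$, $m_2=x_1x_3$, $m_3=x_1x_2$): no variable has support $\{1\}$ or $\{3\}$, so those half-tree variables would map to $0$ and $M_1$ would go to $x_3$, not $x_2x_3$. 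The correct square-free assignment sends the variable of $S$ to the product of those $x_q$ for which $S$ is a block of a decomposition of $V_q$ into half-trees, and this rests exactly on the structural claim you acknowledge you have not proved; your hint moreover appeals to ``acyclicity of $V_q$'', which is not among your hypotheses. (The claim does hold, and more easily than you fear: condition 2 of Proposition \ref{MaxPropFamKrit} applied to the single set $V_q$ says $T$ restricted to the complement of $V_q$ is acyclic, hence a nonempty subtree -- nonempty because codimension two forbids $V_q=[n]$ -- so each connected component of $V_q$ has exactly one outgoing edge and is a half-tree.) You also never prove uniqueness of the morphism (the ``join-irreducible separators'' sentence is an assertion), and you never treat non-square-free $L$: existence by factoring through a polarisation requires knowing the polarised labelling is again in $\CM(T)$, and uniqueness for such $L$ does not reduce to the square-free case (for $(x^2,xy,y^2)$ the map cannot be read off from supports). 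The paper's argument handles arbitrary $L$ uniformly: along an edge $s\pil t$ any morphism must send $x_e,y_e$ to coprime monomials with ratio $m_t/m_s$ (a common factor would divide every label, contradicting codimension two), which forces the images and gives uniqueness at once; existence is then the convexity of each exponent along paths, which follows from connectedness of the subgraphs $T_{\le \bb}$.

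Your maximality step is also not yet an argument. Proposition \ref{MaxPropT} is a property of families already known to be maximal, so ``applying it to a refinement'' is circular, and ``pins down \ldots forcing the reduction back to $\gF$'' is a placeholder rather than a proof; in addition you would need the analogue of Proposition \ref{MaxProMax} relative to $\CM_*(T)$ together with a reduction of arbitrary objects of $\CM(T)$ (non-square-free monomials, repeated supports) to that subcategory. The efficient route -- the one the paper takes -- is that maximality is formal once the morphism statement is proved: given any morphism $\phi\colon B\pil M$ in $\CM(T)$, compose with the canonical morphism $\psi\colon M\pil B$; then $\phi\circ\psi$ is an endomorphism of $M$, hence the identity by uniqueness, so $\phi$ is a split surjection, and the lemma preceding Proposition \ref{MaxProFin} then gives uniqueness of the maximal object up to isomorphism. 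So: keep your verification that (\ref{TreLabMv}) lies in $\CM(T)$ and your reducedness observation (every half-tree properly contained in $S_e^x$ misses the endpoint of $e$ inside $S_e^x$), but the morphism construction, its uniqueness, the non-square-free case, and the maximality deduction all need to be repaired or replaced.
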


\begin{proof} Given an element in $\CM(T)$ where the monomials $m_i$ are in the polynomial ring
$\kr [z_1, \ldots, z_r]$.
If $s \mto{e} t$ is an edge, let the reduced expression for $m_t/m_s$ be $\zz^{a_e}/\zz^{b_e}$.
The sought for morphism of semigroup rings from $\kr [\{x_e\}_{e \in E}, \{y_e\}_{e \in E}]$ to
$\kr [z_1, \ldots, z_r]$ must send $M_t/M_s = y_e/x_e$ to $m_t/m_s$. Since the images of
$y_e$ and $x_e$ must be relatively prime (otherwise all the monomials would
have a common factor), the only possibility is sending 
$y_e$ to $\zz^{a_e}$ and
$x_e$ to $\zz^{b_e}$. Hence the uniqueness of the morphism is clear.
We shall now show that this morphism actually does send the monomial $M_v$ in (\ref{TreLabMv})
to $m_v$.

Consider a variable, say $z_1$. To each vertex $v$ we associate the exponent of $z_1$ in $m_v$.
This set of $z_1$-exponents must be convex in the sense that given a path (in the non-oriented graph)
$s_1 \pil s_2 \pil \ldots
\pil s_{n-1} \pil s_n$, the $z_1$-exponents on this path from $s_1$ to $s_n$ 
must first be nonincreasing
and then nondecreasing. To see this, suppose to the contrary that there were $i < j < k$
where the exponents fulfilled $n_i < n_j > n_k$. Then letting $M$ be $\lcm(m_i, m_k)$, 
the subgraph $T_{\leq M}$ 
would not be connected, contradicting the fact that $T$ gives a cellular resolution.

Now orientate the graph so that all arrows point towards $v$. 
There will be a vertex $u$ such that $z_1$ does not occur in $m_u$ 
(otherwise the ideal would
not have codimension two). 
So let 
\[ u = u_0 \mto{e_1} u_1 \mto{e_2} \ldots \mto{e_r} u_r = v  \]
be the path from $u$ to $v$. Its $z_1$ exponents must be non-decreasing. Now
$M_v = \Pi_{i=1}^r y_{e_i} \times \Pi_{e \neq e_i} y_e$ and the $z_1$ 
exponent of the image of the 
first factor is precisely the exponent of $z_1$ in $m_v$. We must then show that the $z_1$ 
exponent of the image of 
the second factor is zero, i.e. $z_1$ does not occur in $\zz^{a_e}$ for any $e \neq e_i$.

If $s \mto{e} t$ is any other edge in the tree, then deleting $e$, note that $t, v$ and $u$ 
will be  in the same connected component. So the path from $s$ to $u$ must pass through $t$. 
But then the $z_1$ exponents along this path must be non-increasing, and then $\zz^{a_e}$ does
not contain $z_1$. 

\medskip
In view of the uniqueness of the morphism, it is immediate that the labelling (\ref{TreLabMv})
is maximal.

\end{proof}

\rem In \cite{Ph}, J.Phan studies the LCM lattice of a monomial 
ideal, or rather he starts from an atomic lattice and studies monomial
ideals with this as their LCM lattice. He shows that for every such 
lattice there is a distiunguished square free monomial ideal, which he
calls a minimal monomial ideal, whihc has this lattice as the LCM lattice.

If one considers the maximal monomial labelling we have on a tree, this is in
fact a minimal monomial ideal on the atomic lattice it generates. Note
however that for a given tree $T$, the objects in $\CM(T)$ will give many
different LCM lattices. In particular Example \ref{TreEksStreng} shows that
on any tree with $n$ vertices 
there is a monomial labelling whose atomic lattice is the
lattice $\hat{L}$ where $\hat{L} \backslash\{ 0,1\}$ is the antichain
of $n$ elements. In fact this monomial labelling is also the minimal monomial 
ideal for this lattice.

An interesting question is : Given two ideals
$I$ and $J$ in $\CM(n,2)$ which have the same LCM lattice, do they have
the same set of trees which give a cellular resolution?
If so it would induce a nice correspondence between families of 
trees and families of atomic lattices.



\section{Subdivisions of polygons}

Consider a polygon whose vertices are labelled by $0,1, \ldots, n-1$ and let
$X$ be a subdivision without introducing new vertices. 
I.e. we get the subdivisions by introducing
chords in the polygon. We shall use the notation $[i,j]$, called a {\it string} to 
denote the vertices obtained by starting at $i$ and increasing by 
one each step modulo $n$ until we reach $j$. The length of a string is the number
of vertices in it. We denote by $V$ the set of vertices $\{0,1,\ldots, n-1\}$.

  We want to describe the maximal monomial labellings in $\CM(X)$.
 By Lemma \ref{MaxLemXbet} we may
assume that these are in $\CM_\dagger(X)$. To a monomial labelling on $X$, 
there is associated a family
$\gF$ of subsets of the vertices $V$ which determines the monomial 
labelling up to isomorphism (i.e. permutation of variables).

\begin{lemma} Let $\gF$ be the family of subsets of $V$ associated to
an object in $\CM_\dagger(X)$. Then each element in $\gF$ is a string $[i,j]$.
\end{lemma}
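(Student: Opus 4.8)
The plan is to argue by contradiction, using that the realization of $X$ is a $2$-disk together with the two connectivity facts available to us: by Lemma~\ref{MaxLemXbet} the restriction $X|_S$ is connected for every $S \in \gF$ (we are in $\CM_\dagger(X)$), and by Proposition~\ref{MaxPropFamKrit}.2 applied with $W = S$, the restriction of $X$ to $T := V \setminus S$ is acyclic, in particular connected. We may assume $S \neq V$ (else $S$ is already a string), so $T \neq \emptyset$.

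Suppose $S$ is not a string. Then $T$ is not a string either, and its maximal boundary arcs $A_1, \dots, A_k$ alternate around the boundary circle of $X$ with the maximal boundary arcs $B_1, \dots, B_k$ of $S$, where $k \geq 2$ and each $B_\ell$ is nonempty. Each $A_i$ is connected inside $X|_T$ via its boundary edges, and no boundary edge of $X$ joins two distinct arcs $A_i, A_j$; since $X|_T$ is connected, there must be a chord $e = \{p,q\}$ of $X$ with $p \in A_i$, $q \in A_j$ for some $i \neq j$ (it is a genuine chord, not a boundary edge, since $p$ and $q$ are not cyclically adjacent).

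The geometric heart of the argument is then the separation property of the chord: $e$ cuts the disk $X$ into two closed subdisks $D'$ and $D''$, each a union of cells of $X$, with $D' \cap D'' = \bar e$. The two boundary arcs into which $p$ and $q$ divide $\partial X$ lie one in $\partial D'$ and one in $\partial D''$; because $p$ and $q$ lie in different arcs of $T$, each of these boundary arcs contains an entire arc $B_\ell$ of $S$. Pick $s' \in S$ on the $D'$ side and $s'' \in S$ on the $D''$ side; since $s', s'' \notin \{p, q\}$ we get $s' \in D' \setminus \bar e$ and $s'' \in D'' \setminus \bar e$, two disjoint sets. By connectedness of $X|_S$ there is a path from $s'$ to $s''$ in the $1$-skeleton of $X|_S$. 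Every vertex of this path lies in $S$, hence is different from $p$ and $q$; so every edge of the path is an edge of $X$ other than $e$ with both endpoints outside $\{p,q\}$, and its closure is therefore disjoint from $\bar e$ and lies in $D' \setminus \bar e$ or in $D'' \setminus \bar e$. Hence the (connected) image of the path lies in only one of these pieces, contradicting that it joins $s'$ to $s''$. Therefore every $S \in \gF$ is a string.

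The step I expect to require the most care is the separation statement for the chord $e$: that a chord of a subdivided polygon splits the disk into two pieces meeting only along $\bar e$, with every other cell lying on one side. This is the planar (Jordan curve) input; granting it, the rest is bookkeeping with the alternating arc structure of $S$ and $T$ on the boundary circle, plus the observation that a path through $S$-vertices cannot cross a chord whose endpoints lie in $T$.
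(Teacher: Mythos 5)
Your proof is correct and is essentially the same argument as the paper's: both rest on the connectivity of $X|_S$ (Lemma~\ref{MaxLemXbet}), the connectivity of $X$ restricted to the complement (Proposition~\ref{MaxPropFamKrit}, condition 2), and the fact that a chord separates the subdivided polygon. The only cosmetic difference is that you extract a chord joining two arcs of the complement $T$ from the connectivity of $X|_T$ and then contradict the connectivity of $X|_S$, whereas the paper cases on whether a chord joins two arcs of $S$ itself (contradicting connectivity of the complement, respectively of $X|_S$); you also spell out the Jordan-curve separation step that the paper leaves implicit.
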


\begin{proof} 
Let $s$ be an element of $\gF$ and suppose $s$ is not a string. Then $s$ is 
a disjoint union of two or more strings, none of which are adjacent.
If there is a chord between any of these strings, then $X$ restricted to the complement
of $s$ is not connected, contradicting Propositin \ref{MaxPropFamKrit}.
Hence there are no chords between strings in $s$, and so $X$ restricted to
$s$ is not connected, contradicting Lemma \ref{MaxLemXbet}.

\end{proof}

\begin{lemma} \label{PolyLemEndSta} For every vertex $i$ there is a string in $\gF$ 
ending at $i$ and a string in $\gF$ starting at $i$.
\end{lemma}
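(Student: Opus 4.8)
The plan is to extract the desired strings directly from the minimality condition, Proposition~\ref{MaxPropFamKrit}.3, applied to the two boundary edges of the polygon at $i$. Since $X$ is obtained from the polygon by inserting chords and no new vertices, the boundary edges $\{i-1,i\}$ and $\{i,i+1\}$ (indices taken mod $n$) are $1$-faces of $X$, and their vertices are $0$-faces; so these give legitimate pairs of faces $F \susneq G$ to feed into condition~3. The elements of $\gF$ are strings by the preceding lemma, so "$i$ lies in $S$ but a polygon-neighbour of $i$ does not" will already pin down an endpoint of $S$.

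First I would produce a string starting at $i$. Take $F=\{i-1\}\susneq G=\{i-1,i\}$. Proposition~\ref{MaxPropFamKrit}.3 yields an $S\in\gF$ with $S\cap\{i-1\}=\empt$ and $S\cap\{i-1,i\}\neq\empt$, hence $i-1\notin S$ while $i\in S$. Since $S$ is a string containing $i$ but not $i-1$ (in particular $S\neq V$, so it genuinely has a first and a last vertex), it must be of the form $[i,j]$, i.e. a string starting at $i$. Symmetrically, applying Proposition~\ref{MaxPropFamKrit}.3 to $F=\{i+1\}\susneq G=\{i,i+1\}$ gives an $S^\prime\in\gF$ with $i+1\notin S^\prime$ and $i\in S^\prime$; as a string it then has the form $[k,i]$ and ends at $i$.

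There is essentially no obstacle here: the real content has already been provided by the two preceding lemmas (that condition~3 holds for $\gF$, and that every element of $\gF$ is a string, via $\CM_\dagger(X)\sus\CM_*(X)$). The only thing left to observe is the elementary fact that the boundary edges of the polygon survive as faces of the subdivision $X$, which is what licenses the two applications of Proposition~\ref{MaxPropFamKrit}.3 above.
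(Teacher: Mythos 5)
Your proof is correct and essentially the same as the paper's: both arguments come down to producing a set in $\gF$ that contains $i$ but not $i-1$ (resp.\ not $i+1$) and then invoking the fact that every element of $\gF$ is a string. The only difference is packaging — the paper gets this set directly from the monomials forming a minimal generating set (some variable of $m_i$ does not divide $m_{i-1}$), while you obtain the same conclusion from condition 3 of Proposition \ref{MaxPropFamKrit} applied to the boundary edges $\{i-1,i\}$ and $\{i,i+1\}$ of the subdivided polygon.
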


\begin{proof} 
Since the monomials constitute a minimal generating set for the ideal,
there must be some variable in $m_i$ which is not in $m_{i-1}$.
But then to this variable the associated vertex subset must be
a string starting at $i$. Similarly we get a string ending at $i$ by considering
$m_{i+1}$.
\end{proof}

\begin{lemma} \label{PolyLemCov}
Let $i$ be a vertex which is not the end of a chord. Then in $\gF$ there is a 
string ending at $i-1$  and a string starting at $i+1$ whose union cover $V\backslash \{i\}$.
\end{lemma}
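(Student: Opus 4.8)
The statement to prove is Lemma~\ref{PolyLemCov}: if $i$ is a vertex of the polygon that is not an endpoint of a chord, then $\gF$ contains a string ending at $i-1$ and a string starting at $i+1$ whose union covers $V\backslash\{i\}$. The natural approach is to feed the hypothesis into Proposition~\ref{MaxPropT}, applied with a cleverly chosen vertex $t$ lying in a suitable $T\in\gF$. First I would use Lemma~\ref{PolyLemEndSta} together with Lemma~\ref{MaxLemXbet} to locate a starting string: there is some string $[i+1,k]\in\gF$ starting at $i+1$ (so in particular $i\notin[i+1,k]$). Dually there is a string $[\ell,i-1]\in\gF$ ending at $i-1$.

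Next I would invoke Proposition~\ref{MaxPropT} with $T=[i+1,k]$ and $t=i+1$, and with $\dim X=2$ (since $X$ is a subdivision of a polygon, a two-dimensional complex, so $\dim X = 2$). This yields two further strings $S_1,S_2\in\gF$, neither containing $i+1$, such that $T\cup S_1\cup S_2$ covers $X$, i.e. covers all of $V$. In particular $S_1\cup S_2$ must cover $V\setminus([i+1,k])$, which includes the vertex $i$; so one of them, say $S_1$, contains $i$. Write $S_1=[a,b]$ with $a\leq i\leq b$ in the cyclic order, and $i+1\notin[a,b]$; hence $b=i$, so $S_1$ is a string \emph{ending} at $i$. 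But here I must be careful: the lemma wants a string ending at $i-1$, not at $i$. This is where the hypothesis that $i$ is not an endpoint of a chord enters. The key observation is that if $i$ is not a chord endpoint, then removing $i$ does not disconnect $X$ in a way that forces a string of $\gF$ to terminate precisely at $i$ rather than continue — more precisely, I would argue that the string $S_1=[a,i]$ can be replaced, using Proposition~\ref{MaxPropFamKrit}.3 and the structure of $\gF$, by a string ending at $i-1$: since $i$ is interior to no chord, the vertex $i$ has exactly two neighbours $i-1$ and $i+1$ in $X$, and any string of $\gF$ through $i$ that does not contain $i+1$ must, by the connectivity requirement of Lemma~\ref{MaxLemXbet} applied to complements (Proposition~\ref{MaxPropFamKrit}.2), actually be extendable/shrinkable so that one gets a string ending at $i-1$. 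The cleanest route is probably: apply Proposition~\ref{MaxPropT} instead with $t=i+1\in T=[i+1,k]$, obtain $S_1,S_2$ covering $V\setminus[i+1,k]$, then note $i\in S_1$ say; since $S_1$ is a string not containing $i+1$ it is of the form $[a,i]$; now I claim $[a,i-1]\in\gF$ as well, or that we may directly take the string ending at $i-1$ guaranteed by Lemma~\ref{PolyLemEndSta} and show it, together with a starting-at-$i+1$ string, covers $V\setminus\{i\}$ by the counting/acyclicity constraint.

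Concretely, here is the cleanest version of the argument I would write out. By Lemma~\ref{PolyLemEndSta} pick a string $T=[i+1,k]\in\gF$ starting at $i+1$. Apply Proposition~\ref{MaxPropT} with this $T$ and $t=i+1$: since $\dim X=2$, we get $S_1,S_2\in\gF$ with $i+1\notin S_1\cup S_2$ and $T\cup S_1\cup S_2 = V$. Since $i\notin T$, one of $S_1,S_2$ contains $i$; as it is a string avoiding $i+1$, it has the form $[a,i]$ for some $a$. Now the complement of $[a,i]\cup$(other sets) reasoning, combined with the hypothesis that $i$ is not an endpoint of a chord, shows that $[a,i-1]$ also satisfies the conditions of Proposition~\ref{MaxPropFamKrit} relative to $\gF\cup\{[a,i-1]\}$, so by maximality $[a,i-1]\in\gF$ already. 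Then $[a,i-1]$ is a string ending at $i-1$, and $T'=[i+1,k]$; and $[a,i-1]\cup T'$ together with the remaining $S_j$ cover $V$, with the only vertex possibly missing from $[a,i-1]\cup T'$ being $i$ — but in fact I would show $[a,i-1]\cup T' \supseteq V\setminus\{i\}$ directly: any vertex $v\neq i$ lies in $T\cup S_1\cup S_2$; if $v\in T=T'$ or $v\in[a,i]\setminus\{i\}=[a,i-1]$ we are done; the remaining case is $v\in S_2\setminus([a,i]\cup T)$, and here I use that $i$ is not a chord endpoint to rule this out or to absorb $S_2$ into the picture. The symmetric construction (apply Proposition~\ref{MaxPropT} with a string ending at $i-1$ and $t=i-1$) produces the companion string starting at $i+1$, and a final consistency check shows the same pair works for both halves.

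\textbf{Main obstacle.} The genuinely delicate point is converting ``a string of $\gF$ ending at $i$'' into ``a string of $\gF$ ending at $i-1$,'' i.e. seeing exactly where the hypothesis ``$i$ is not an endpoint of a chord'' is used. When $i$ has no incident chord, deleting $i$ from $X$ leaves a complex whose topology is controlled (it is still contractible, being essentially a subdivided disk with a boundary vertex removed), whereas if $i$ were a chord endpoint, removing $i$ could separate the two faces meeting along that chord, and strings would be forced to stop at $i$. Pinning down this dichotomy via Proposition~\ref{MaxPropFamKrit}.2 (the acyclicity-of-complements condition) and the connectivity requirement of Lemma~\ref{MaxLemXbet}, and then concluding by maximality of $\gF$ in the refinement order (Proposition~\ref{MaxProMax}b), is the crux; the rest is bookkeeping with strings and cyclic intervals.
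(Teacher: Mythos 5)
There is a genuine gap. Your plan never actually proves the covering statement: the crucial steps are left as announcements (``I would argue'', ``rule this out or absorb $S_2$ into the picture''), and the admitted ``main obstacle'' --- converting a string ending at $i$ into one ending at $i-1$, and then showing the final pair covers $V\setminus\{i\}$ --- is exactly what remains unproved. Moreover, the appeal to maximality is doubly problematic. First, Proposition~\ref{MaxPropT} is stated only for \emph{maximal} families, whereas Lemma~\ref{PolyLemCov} must hold for the family $\gF$ of an arbitrary object of $\CM_\dagger(X)$ (it is used in Theorem~\ref{PolyThePoly} to classify \emph{all} objects of $\CM_\dagger(X)$, and only afterwards is maximality deduced); so an argument routed through Proposition~\ref{MaxPropT} proves a weaker statement than the one needed. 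Second, even granting maximality, your inference ``$\gF\cup\{[a,i-1]\}$ satisfies Proposition~\ref{MaxPropFamKrit}, hence $[a,i-1]\in\gF$'' is not what maximality in the refinement order gives: it only yields that $[a,i-1]$ is a disjoint union of sets of $\gF$, and in any case you have not verified the criteria for the enlarged family.

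The missing idea is much simpler and bypasses all of this. By Lemma~\ref{PolyLemEndSta} take any string $s_1\in\gF$ ending at $i-1$ and any string $s_2\in\gF$ starting at $i+1$; neither can contain $i$, since a string ending at $i-1$ (resp.\ starting at $i+1$) that contains $i$ would be all of $V$, which is impossible. By condition~2 of Proposition~\ref{MaxPropFamKrit}, $X$ restricted to the complement of $W=s_1\cup s_2$ is acyclic. But since $i$ is not the endpoint of a chord, its only neighbours in $X$ are $i-1$ and $i+1$, both lying in $W$; hence $i$ is an isolated vertex of $X_{|\overline{W}}$. An acyclic complex cannot contain an isolated vertex together with any other vertex (it would be disconnected), so the complement of $W$ is exactly $\{i\}$, i.e.\ $s_1\cup s_2\supseteq V\setminus\{i\}$. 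This is the paper's argument; your sketch gestures at the acyclicity condition only in the closing paragraph but never deploys it, and as written the proof does not go through.
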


\begin{proof} By Lemma \ref{PolyLemEndSta} there are strings ending at $i-1$ and
starting at $i+1$. Letting $W$ be their union, we know that $X$ restricted to the
complement of $W$ is acyclic. But $i$ is an isolated vertex here, and
so this must be the only vertex in the complement of $W$.
\end{proof}

\begin{lemma} \label{PolyLemSkyv}
Given a string $s$ in $\gF$ starting at $i$, which is not the endpoint of a chord.
Then there is a string in $\gF$ starting at $i+1$ of length greater or equal to that
of $s$. Similarly if $s$  ends at $j$, not the endpoint of a chord, there
is a string ending at $j-1$ of greater or equal length than that of $s$.
\end{lemma}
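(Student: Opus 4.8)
The plan is to prove the first assertion (about strings starting at $i$) and to obtain the second one from the orientation-reversing symmetry $v \mapsto -v$ of the labelled polygon, which interchanges ``string starting at $v$'' with ``string ending at $-v$'' and preserves the set of chord-endpoints; so the second assertion is literally the first applied to the relabelled subdivision. Fix then a string $s = [i,k]$ in $\gF$ starting at $i$, with $i$ not the endpoint of a chord. As a preliminary I would record that $V$ itself is not in $\gF$: the elements of $\gF$ cover $V$ (the remark after Proposition \ref{MaxPropFamKrit}), but by condition 1 of that proposition (with $\dim X = 2$) no two of them cover $V$, so $\gF$ has at least three elements and $V \in \gF$ would already violate condition 1. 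In particular every string in $\gF$ is a proper arc, which keeps the cyclic interval arithmetic below unambiguous (e.g. $k \neq i-1$).

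Since $i$ is not a chord-endpoint, Lemma \ref{PolyLemCov} applies at $i$ and gives a string $a = [i+1,p]$ and a string $b = [q,i-1]$ in $\gF$ with $a \cup b = V \setminus \{i\}$. The set $V \setminus \{i\}$ is a single arc of which $a$ is the initial segment (starting at $i+1$) and $b$ a segment ending at $i-1$; for their union to be the whole arc there can be no gap, so $q$ lies in $[i+1,p+1]$. Now I would split into two cases. If $|a| \geq |s|$, then $a$ is a string starting at $i+1$ of length $\geq |s|$ and we are done. Otherwise $|a| < |s|$, which — since $a$ runs from $i+1$ — forces $p \in [i+1,k]$; combined with $q \in [i+1,p+1]$ this gives $q \in [i+1,k+1]$, hence $b = [q,i-1] \supseteq [k+1,i-1]$, and therefore $s \cup b = [i,k] \cup [k+1,i-1] = V$. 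But $s$ and $b$ are then two distinct elements of $\gF$ covering $V$, contradicting condition 1 of Proposition \ref{MaxPropFamKrit}. So the second case cannot occur, and the lemma follows.

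I do not anticipate a genuine obstacle: this is a short case analysis built on Lemma \ref{PolyLemCov}. The point worth flagging is which hypothesis does the work — it is the codimension bound (condition 1 of Proposition \ref{MaxPropFamKrit}, ``no two subsets of $\gF$ cover $V$''), not acyclicity or minimality, that eliminates the ``$a$ too short'' case. The only care needed is the bookkeeping with the cyclic intervals $[\cdot,\cdot]$: using $V \notin \gF$ so that all strings are proper arcs, and noting that the degenerate case $|s| = 1$ lands automatically in the first, ``done'', case.
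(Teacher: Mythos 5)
Your argument is correct and is essentially the paper's own proof: apply Lemma \ref{PolyLemCov} at the vertex $i$, and observe that if the string starting at $i+1$ were shorter than $s$ (equivalently, contained in $s$), then $s$ together with the string ending at $i-1$ would cover $V$, contradicting condition 1 of Proposition \ref{MaxPropFamKrit}; your explicit endpoint bookkeeping, the reflection argument for the second statement, and the remark that $V\notin\gF$ are just more detailed versions of steps the paper leaves implicit.
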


\begin{proof}
There is a string $t$ starting at $i+1$ which together with a string ending at
$i-1$ covers $V \backslash \{i\}$. It $t$ was contained in $s$, then $s$ and the 
string ending in $i-1$ would cover $V$. Impossible. Hence $t$ is not contained in $s$,
and its length must be at least that of $s$.
\end{proof}


Now we are ready to do the case when $X$ is a polygon, i.e. it contains no chords.

\begin{theorem} \label{PolyThePoly}
Let $X$ be a polygon and $\gF$ the family of strings associated
to an object in $\CM_\dagger(X)$.

a. If the number of vertices is odd, $n = 2r+1$, then $\gF$ consists of all
strings of length $r$. Hence, up to isomorphism, there is only one object in $\CM_\dagger(X)$
and this is the only maximal monomial labelling of $X$ (up to isomorphism). 

b. If the number of vertices is even, there are no monomial labellings of $X$ giving
ideals of codimension three with Cohen-Macaulay quotient ring, i.e. $\CM(X)$ is 
empty.
\end{theorem}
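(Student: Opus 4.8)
The plan is to exploit the combinatorial lemmas already established for $\CM_\dagger(X)$, working entirely with the associated family $\gF$ of strings. By Lemma~\ref{MaxLemXbet} every object in $\CM_\dagger(X)$ has $\gF$ consisting of strings, and by Lemma~\ref{PolyLemEndSta} through each vertex there passes a string starting there and one ending there. Since a polygon has no chords, every vertex satisfies the hypothesis of Lemmas~\ref{PolyLemCov} and~\ref{PolyLemSkyv}. First I would show that all strings in $\gF$ have the same length $r$: by Lemma~\ref{PolyLemSkyv}, given a string of length $\ell$ starting at $i$, there is one of length $\geq \ell$ starting at $i+1$, and iterating around the $n$-gon and back to $i$ forces all these lengths to be equal; the same argument from the ``ends at $j$'' half of Lemma~\ref{PolyLemSkyv} and the combination shows every string has the common length $r$. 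Next, Lemma~\ref{PolyLemCov} says a string ending at $i-1$ together with a string starting at $i+1$ covers $V \setminus \{i\}$; since both have length $r$, two strings of length $r$ covering the $n-1$ vertices of $V\setminus\{i\}$ with no overlap beyond what is forced gives $2r \geq n-1$, i.e. $n \leq 2r+1$; and condition~1 of Proposition~\ref{MaxPropFamKrit} (no $d=2$ subsets of $\gF$ cover $V$) says two strings of length $r$ cannot cover all of $V$, so $2r < n$, i.e. $n \geq 2r+1$. Hence $n = 2r+1$ is odd, which already proves part (b): if $n$ is even, no such $\gF$ exists, so $\CM_\dagger(X)$ is empty, and since by Lemma~\ref{MaxLemXbet} every maximal object lies in $\CM_\dagger(X)$ and by Proposition~\ref{MaxProFin}/the discussion every object of $\CM(X)$ maps from a maximal one, $\CM(X)$ itself is empty.

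For part (a), with $n = 2r+1$, I would argue that $\gF$ must contain \emph{every} string of length $r$. Lemma~\ref{PolyLemCov} gives, for each vertex $i$, a string $s_i$ ending at $i-1$ and a string $t_i$ starting at $i+1$ whose union is $V\setminus\{i\}$; since $|s_i| = |t_i| = r$ and $|V\setminus\{i\}| = 2r$, these two strings are disjoint and together partition $V\setminus\{i\}$, which forces $s_i = [i+1+r, i-1]$ and $t_i = [i+1, i+r]$ — precisely the two length-$r$ strings not containing $i$, arranged to ``just miss'' $i$. Letting $i$ range over all vertices, the strings $t_i = [i+1, i+r]$ for $i \in V$ run through all $n$ strings of length $r$. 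Hence $\gF \supseteq \{\text{all strings of length } r\}$. Conversely, any string in $\gF$ has length exactly $r$ as shown above, so $\gF$ equals the set of all length-$r$ strings.

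Finally I would verify that this family $\gF$ of all $r$-strings in a $(2r+1)$-gon does satisfy the three conditions of Proposition~\ref{MaxPropFamKrit}, so that it genuinely corresponds to an object of $\CM_*(X) = \CM_\dagger(X)$: condition~1 because no two $r$-strings cover $2r+1$ vertices; condition~2 because a union $W$ of $r$-strings, if nonempty, is either all of $V$ (complement empty, acyclic) or an arc/union of arcs whose complement in the circle is a disjoint union of arcs (contractible pieces of the $1$-complex, hence acyclic), and here the key point is that as soon as $W$ contains two vertices at circular distance $\leq r$ it is connected, and one checks the complement is always a single arc or empty — this is the one spot needing a short case check; condition~3 because for faces $F \subsetneq G$ of the polygon (a vertex inside an edge, or $\emptyset$ inside a vertex) one easily exhibits an $r$-string missing $F$ but meeting $G$. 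Since $\gF$ is forced, it is the unique isomorphism class in $\CM_\dagger(X)$, and being the only one it is vacuously the unique maximal object. The main obstacle I anticipate is the acyclicity bookkeeping in condition~2: one must be careful that a union of several $r$-strings on the circle, together with its complement, always decomposes into contractible arcs with the right incidences, rather than accidentally producing a disconnected or circular piece; but since each string has length $r < n$ and $n = 2r+1$, the complement of any nonempty union of strings is a proper union of ``gaps'', and a direct argument (or the Mayer--Vietoris Claim used in Lemma~\ref{MaxLemXbet}) closes this.
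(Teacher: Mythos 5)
Your overall strategy is the paper's strategy (push strings around with Lemma \ref{PolyLemSkyv}, use the covering Lemma \ref{PolyLemCov} and condition 1 of Proposition \ref{MaxPropFamKrit} to pin down the parity and the length, then check the criteria for the family of all length-$r$ strings), but there is a genuine gap at the first step. You claim that iterating Lemma \ref{PolyLemSkyv} around the polygon ``forces all these lengths to be equal,'' i.e.\ that every string of $\gF$ has the same length. The lemma only asserts that for a string of length $\ell$ starting at $i$ there \emph{exists} a string of length $\geq \ell$ starting at $i+1$; iterating it around the cycle shows that the maximal length $L$ is attained by a string starting at every vertex (so all length-$L$ strings lie in $\gF$), but it does not exclude the coexistence of shorter strings in $\gF$ — the existential conclusions of the lemma are already satisfied by the length-$L$ strings, so no amount of combining the ``starts at $i$'' and ``ends at $j$'' halves can rule a shorter string out. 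Your later steps inherit this: the sentence ``any string in $\gF$ has length exactly $r$ as shown above, so $\gF$ equals the set of all length-$r$ strings'' is exactly the inclusion that has not been proved.

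The paper closes this with a separate disconnectedness argument that you never invoke for this purpose: once one knows $n=2r+1$ and that \emph{all} strings of length $r$ lie in $\gF$, a hypothetical string $s\in\gF$ of length $\leq r-1$ has complement of length $\geq r+2$, so one can choose a length-$r$ string $t\in\gF$ inside that complement leaving a nonempty gap on each side; then $X$ restricted to the complement of $s\cup t$ is disconnected, contradicting condition 2 of Proposition \ref{MaxPropFamKrit}. With that one extra argument your proof goes through, because the rest of what you wrote survives if restated in terms of the maximal length $L$: Lemma \ref{PolyLemCov} gives two strings of lengths $\leq L$ covering the $n-1$ vertices of $V\setminus\{i\}$, so $2L\geq n-1$, while condition 1 gives $2L<n$, whence $n=2L+1$ is odd (this settles part b, with the bridge to $\CM(X)$ being empty via maximal objects lying in $\CM_\dagger(X)$, as you say), and the forced equality of both covering strings' lengths with $r$ then yields, as $i$ varies, that every length-$r$ string is in $\gF$. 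So the gap is localized and fixable, but as written the key ``no shorter strings'' direction of part (a) is unproved.
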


\rem This result is known but we don't know of a specific reference. 
It is of course closely related to the Buchsbaum-Eisenbud
structure theorem in  \cite[Thm. 2.1]{BE}.
\remfin

\begin{proof}
Let $L$ be the length of the longest string in $\gF$. By Lemma \ref{PolyLemSkyv} all
strings of this length must be in $\gF$. Since two strings cannot cover the vertices,
we must have $2L < n$.
If $n$ is even, equal to $2r$, then $L < r$. But then there
will be two disjoint nonadjacent strings of length $L$, and so $X$ restricted
to the complement of the union of these is disconnected. Hence $n$ cannot be 
even. If $n$ is odd, equal to $2r+1$, a similar argument gives that $L$ cannot 
be less or equal to $r-1$. Hence we must have $L = r$. Again an argument as above
gives that there can not be in addition any string of length less or equal to $r-1$. And so 
all strings have length $r$. 
   It is easy to check that the family of all strings of length $r$ fulfils the criteria
of Proposition \ref{MaxPropFamKrit}, and so it corresponds to an object in $\CM_\dagger(X)$.
Being the only object of $\CM_\dagger(X)$ (up to isomorphism), it must be a maximal monomial
labelling of $X$.
\end{proof}

Now in the rest of this section, we shall consider the case of a polygon with one chord. 
In this case we shall show that there are exactly two maximal
objects in $\CM(X)$. They must be in $\CM_\dagger(X)$ and so the families $\gF_1$ and
$\gF_2$  of subsets
of vertices will consists of strings. Let us describe these families of strings
explicitly. 

\medskip
\noindent {\bf Maximal monomial labellings of $X$.} Let the chord be between 
$0$ and $a$ where $2a \leq n$.
Suppose the number of vertices $n$ is odd, equal to $2r+1$. 
The first family $\gF_1$ is the family in Theorem \ref{PolyThePoly} a. extended by adding
one string. It consists of :

\begin{itemize}
\item[a.] All strings of length $r$. 
\item[b.] The string $[1,a-1]$.
\end{itemize}

\noindent The family $\gF_2$ is given by :

\begin{itemize}
\item[a.] All strings of length $r+1$ containing $[0,a]$.
\item[b.] All strings of length $r$ containing i) $0$ but not $a-1$,
or ii) $a$ but not $1$.
\item[c.] All strings of length $r-1$ disjoint from $[0,a]$.
\item[d.] The string $[1,a-1]$.
\end{itemize}

\noindent In both cases the families consists of $n+1$ subsets of vertices,
and hence the maximal labellings of $X$ are monomials in $n+1$ variables.

\medskip

Suppose the number of vertices $n$ is even, equal to $2r$.
The family $\gF_1$ is given by :

\begin{itemize}
\item[a.] All strings of length $r$ containing $0$.
\item[b.] All strings of length $r-1$ not containing $0$ and $1$.
\item[c.] The string $[1,a-1]$.
\end{itemize}

\noindent The family $\gF_2$ is the mirror image of the family $\gF_1$ :
\begin{itemize}
\item[a.] All strings of length $r$ containing $a$.
\item[b.] All strings of length $r-1$ not containing $a$ and $a-1$.
\item[c.] The string $[1,a-1]$.
\end{itemize}

\noindent Again in both cases the families consists of $n+1$ subsets of vertices,
and hence the maximal monomial labellings of $X$ are monomials in $n+1$ variables.

\begin{theorem} \label{PolyThmKorde} Let $X$ be a polygon with a chord.
Then there are two maximal objects in $\CM(X)$, and their associated families of
subsets of vertices are given by $\gF_1$ and $\gF_2$ above.
\end{theorem}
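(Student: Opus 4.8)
The plan is to establish the theorem in two independent directions, following the pattern already set by Theorem \ref{PolyThePoly} and the supporting lemmas. First I would verify that the explicitly described families $\gF_1$ and $\gF_2$ (in both the odd and even cases) really do correspond to objects of $\CM_\dagger(X)$, by checking the three conditions of Proposition \ref{MaxPropFamKrit} together with the connectedness condition of Lemma \ref{MaxLemXbet}. Since every element listed is a string, Lemma \ref{MaxLemXbet} is automatic; condition 1 (no $\dim X = 2$ strings cover $V$) is a counting/geometry check using that the listed strings have length at most $r+1 < n-1$; condition 3 (minimality) amounts to checking that for every pair $F \subsetneq G$ of vertex-sets of faces of $X$ there is a listed string meeting $G$ but not $F$, which one does face by face — the faces here are the vertices, the boundary edges, the chord, and the two regions cut out by the chord. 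The substantive point is condition 2: for every union $W$ of listed strings, $X$ restricted to the complement is acyclic. Here one exploits that $X$ is a polygon with one chord, so its only nontrivial topology is the two $2$-cells; the complement of $W$ is a union of boundary arcs possibly together with one or both $2$-cells, and one checks it is either empty, contractible, or (the delicate case) still connected and simply connected because whenever $W$ omits an interior point of a $2$-cell it must omit enough of the bounding arc to keep that cell's closure contractible. The string $[1,a-1]$ in part (d)/(c) is precisely what guarantees the chord itself gets "covered" correctly so that the $2$-cells never get disconnected from the rest.

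Second, and this is the real content, I would show these are the \emph{only} maximal objects. So let $\gF$ be the family attached to an arbitrary object of $\CM_\dagger(X)$; by the lemmas preceding Theorem \ref{PolyThePoly}, every element of $\gF$ is a string, for every vertex $i$ there is a string in $\gF$ starting at $i$ and one ending at $i$ (Lemma \ref{PolyLemEndSta}), and at every vertex $i$ that is not an endpoint of the chord one has the "sliding" property of Lemma \ref{PolyLemSkyv} and the covering property of Lemma \ref{PolyLemCov}. The chord is between $0$ and $a$. The key structural observation is that the sliding Lemma \ref{PolyLemSkyv} forces the lengths of the longest strings through each vertex to behave almost like the constant function of Theorem \ref{PolyThePoly}, except that the sliding argument breaks exactly at the two chord endpoints $0$ and $a$. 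Tracking how the maximal string-length can change only as one crosses $0$ or $a$, and using condition 2 (no two nonadjacent strings of the combined complement-length may leave two components) to bound $2L$ against $n$ on each side of the chord, one is forced into a dichotomy: either the longest strings all have the "symmetric" length distribution — giving, after running the extremal arguments of Theorem \ref{PolyThePoly} on each of the two arcs $[0,a]$ and $[a,n]$, the family $\gF_1$ — or the length jumps by one at exactly one of the two chord endpoints, giving (by the same extremal reasoning, now applied asymmetrically) the family $\gF_2$. The reflection symmetry $i \mapsto -i$ of the configuration (which swaps the two sides of the chord) is what makes "$\gF_2$ and its mirror image" the same object in the even case and produces the two distinct families as the two orientations of the asymmetry in the odd case. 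Finally Lemma \ref{PolyLemCov}, applied at every non-chord vertex, pins down that no strings shorter than the ones listed can occur and that all strings of the stated lengths must occur, exactly as in the last paragraph of the proof of Theorem \ref{PolyThePoly}.

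The main obstacle I expect is the bookkeeping in this second part: unlike the polygon case, the maximal string-length is no longer constant, so one cannot simply say "all strings of length $L$ are in $\gF$." Instead one must carefully propagate Lemma \ref{PolyLemSkyv} around the polygon, note precisely where and by how much it may fail (only at $0$ and at $a$, and by at most one in length), and then rule out the impossible combinations using condition 2 of Proposition \ref{MaxPropFamKrit} — the acyclicity of $X$ minus a union of strings — which in the presence of the chord has a genuinely two-dimensional flavour (one must control both whether a $2$-cell survives and whether it stays attached). Handling the parity split cleanly, so that the odd case yields two genuinely non-isomorphic families while the even case yields a family and its mirror image that happen to be the two maximal objects, is where the argument needs the most care; I would organize it by first determining the multiset of lengths of maximal strings through each vertex, then reconstructing $\gF$ from that data, and only at the end checking against the explicit lists and confirming via Lemma \ref{PolyLemSkyv} and \ref{PolyLemCov} that nothing more and nothing less can lie in $\gF$.
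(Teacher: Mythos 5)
Your overall strategy is the paper's: first check that $\gF_1$ and $\gF_2$ satisfy the criteria of Proposition \ref{MaxPropFamKrit} (the paper dispatches this verification in a sentence), then use the string lemmas --- Lemma \ref{PolyLemEndSta}, Lemma \ref{PolyLemCov}, and the sliding Lemma \ref{PolyLemSkyv}, which indeed fails only at the chord endpoints $0$ and $a$ --- to force the family attached to any object of $\CM_\dagger(X)$ into one of the two lists. One preparatory fact you do not isolate but which the paper needs at the outset is Lemma \ref{PolyLemCompl}: no string of $\gF$ contains the complement of $(0,a)$. This, combined with two-string covering and disconnected-complement arguments (not any genuinely two-dimensional acyclicity analysis), is what produces all the length bounds: lengths $r$ and $r-1$ on the two sides of each interior vertex of $(0,a)$ when $n=2r$, and length at most $r+1$ when $n=2r+1$.

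The genuine problem is your stated dichotomy, which is backwards and would derail the even case if executed literally. You claim: symmetric length distribution gives $\gF_1$, a length jump at exactly one chord endpoint gives $\gF_2$, with the reflection swapping the two sides of the chord identifying $\gF_2$ with its mirror in the even case and producing the two distinct families in the odd case. In fact it is the other way around. For $n=2r$ even there is no symmetric option at all: Lemma \ref{PolyLemJamni} forces the lengths $r$ and $r-1$ asymmetrically, and the actual dichotomy is whether the length-$r$ strings contain $0$ or contain $a$; here $\gF_1$ and $\gF_2$ are mirror images of each other. For $n=2r+1$ odd the dichotomy is whether $\gF$ contains strings of length $r+1$ (necessarily containing $[0,a]$) or not; $\gF_1$ (all length-$r$ strings plus $[1,a-1]$) and $\gF_2$ are each invariant under the reflection fixing the chord and are not mirror images of one another. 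In addition, your assertion that the length can jump ``by at most one'' at $0$ and $a$ is not something Lemma \ref{PolyLemSkyv} gives you --- the paper derives every such constraint from covering and acyclicity arguments --- and ``running the extremal argument of Theorem \ref{PolyThePoly} on each of the two arcs'' does not literally make sense, since the elements of $\gF$ are strings of the whole cycle and may straddle the chord endpoints. So the toolkit you list is the right one, but the organizing principle of the classification step must be replaced by the parity-dependent dichotomies above before the plan can be carried out.
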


We shall proceed to prove this theorem through a series of lemmata.
In the lemmata below we let $\gF$ be the family of subsets of vertices arising from an
object of $\CM_\dagger(X)$. 

\begin{lemma}
 \label{PolyLemCompl} There is no string in $\gF$ containing the complement of $(0,a)$.
\end{lemma}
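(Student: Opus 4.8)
Recall the chord joins vertices $0$ and $a$ with $2a \le n$. The complement of the open interval $(0,a)$ is the string $[a, n-1] \cup \{0\} = [a,0]$ (going the long way round), which together with the chord and the short arc $[0,a]$ bounds the two cells of $X$. The plan is to argue by contradiction: suppose some $S \in \gF$ contains $[a,0]$, i.e. $S \supseteq \{a, a+1, \ldots, n-1, 0\}$. Since elements of $\gF$ are strings (by the lemma preceding Theorem \ref{PolyThePoly}), $S$ itself is a string, so $S = [a, j]$ for some $j$ with $1 \le j \le a-1$ (it cannot be all of $V$, since by Proposition \ref{MaxPropFamKrit}.1 no single subset — let alone no $\dim X = 2$ subsets — can cover $V$).

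**The key step.** I would examine the complement $W = \overline{S}$, which is the string $[j+1, a-1]$, a subset of the short arc strictly between $0$ and $a$. By Proposition \ref{MaxPropFamKrit}.2, $X$ restricted to $\overline{W} = S$ must be acyclic; but more usefully, consider instead the restriction of $X$ to $\overline{S} = W = [j+1,a-1]$. This is a sub-arc of the short side of the polygon containing neither endpoint $0$ nor $a$ of the chord, hence $X_{|W}$ is just a path — that is fine, it is acyclic. The contradiction must instead come from minimality, Proposition \ref{MaxPropFamKrit}.3, or from Proposition \ref{MaxPropT}. The cleanest route: apply Proposition \ref{MaxPropT} to any $t \in S$ with $t$ on the long arc, say $t = n-1$. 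There must exist $S_1, S_2 \in \gF$ with $t \notin S_1 \cup S_2$ and $S \cup S_1 \cup S_2 = V$. But $S = [a,j]$ already contains the entire long arc and wraps around through $0$; the only vertices it misses are $j+1, \ldots, a-1$ on the short arc. So $S_1 \cup S_2$ must cover $[j+1,a-1]$, and neither contains $n-1$. Now I look at what such strings can be: a string covering part of $[j+1,a-1]$ and avoiding $n-1$ is forced to lie along the short arc. Then I invoke Lemma \ref{PolyLemCov} or a direct counting argument against the codimension: the three strings $S, S_1, S_2$ covering $V$ would, via Proposition \ref{MaxPropFamKrit}.1, be the \emph{only} way to do so with three sets, yet I can exhibit a fourth covering using standard strings, or show the associated ideal drops codimension — contradicting Cohen-Macaulayness.

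**Main obstacle.** The subtle point is that "$S$ contains the complement of $(0,a)$" is a large string wrapping around the polygon, and such a string is geometrically plausible as a subset — the obstruction is not topological acyclicity of a single restriction but rather a \emph{global} incompatibility with having enough strings in $\gF$ to satisfy both the covering/codimension constraint (Proposition \ref{MaxPropFamKrit}.1, which caps covers) and the minimality constraint (Proposition \ref{MaxPropFamKrit}.3, which forces many small strings). So the hard part is choosing the right pair $F \subsetneq G$ of faces in condition 3, or the right vertex $t$ in Proposition \ref{MaxPropT}, to extract the contradiction without circular reasoning. I expect the decisive observation to be this: because $S$ is so long, for the edge $F = \emptyset \subsetneq G = \{a\}$ (or a cell $F$ versus the $2$-cell $G$ on the short side), condition 3 forces a string avoiding $a$ but hitting the short arc, and iterating Lemma \ref{PolyLemSkyv} to slide strings along the chord-free portion of the boundary produces a string of length $\ge |S|$ starting past $a$, which together with a short string recovers a cover of $V$ omitting some vertex of $S$ — now contradicting that $S$ is needed (since $S$ is the unique string through that vertex only if $\gF$ is as small as codimension forces). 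Wrapping these pieces together gives the claim.
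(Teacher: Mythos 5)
Your proposal assembles the right toolbox (Lemmas \ref{PolyLemCov} and \ref{PolyLemSkyv}, condition 1 of Proposition \ref{MaxPropFamKrit}) but never lands the decisive step, and the routes you sketch do not close. Your ``cleanest route'' via Proposition \ref{MaxPropT} is unavailable as stated: that proposition requires $\gF$ to be maximal, whereas this lemma is asserted for any family arising from an object of $\CM_\dagger(X)$. Even granting maximality, Proposition \ref{MaxPropT} only produces a cover of $V$ by the three sets $S, S_1, S_2$, which is no contradiction at all: condition 1 forbids covers by $\dim X = 2$ sets, and covers by three sets must exist since the codimension is exactly three. The moves you then attempt (``the only way to do so with three sets'', exhibiting ``a fourth covering'', ``contradicting that $S$ is needed'') are not valid deductions --- nothing in the axioms makes three-set covers unique or makes any particular string ``needed''. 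Similarly, the claim that sliding via Lemma \ref{PolyLemSkyv} produces a string of length $\geq |S|$ has no source: that lemma only says lengths do not decrease under a unit shift past a non-chord vertex, and gives no comparison with the hypothesized $S$. (A minor point: $S \supseteq V \setminus (0,a)$ does not force $S = [a,j]$; it only forces the complement of $S$ to be a substring of $(0,a)$.)

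What is missing is the two-set cover contradiction, which is exactly how the paper argues: pick a vertex $i$ in the complement of $[0,a]$ (so $i$ is not a chord endpoint); by Lemma \ref{PolyLemCov} there are strings ending at $i-1$ and starting at $i+1$ that together cover $V \setminus \{i\}$, so one of them has length at least $(n-1)/2$. Push that long string with Lemma \ref{PolyLemSkyv} along the long arc until it ends at $a$ (or, in the symmetric case, starts at $0$); since $2a \leq n$, its length forces it to contain $[1,a]$ (respectively $[0,a-1]$). Together with a string containing the complement of $(0,a)$ this would give \emph{two} sets of $\gF$ covering $V$, contradicting condition 1 of Proposition \ref{MaxPropFamKrit}. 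So the gap is not one of presentation: the contradiction must be against the ``no two strings cover $V$'' constraint, obtained by manufacturing one explicit long string abutting a chord endpoint, and none of your sketched endgames reaches it.
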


\begin{proof}
Let $i$ be in the complement of $[0,a]$. By Lemma 
\ref{PolyLemCov} there is a string ending in $i-1$ and a string starting in $i+1$, together
covering $V \backslash \{i\}$. At least one of them has length $\geq (n-1)/2$, say the one ending
in $i-1$. We can then by Lemma \ref{PolyLemSkyv} push it backwards till it ends in $a$. 
Then it must start in $1$ or earlier.
If there were a string containing the complement of $(0,a)$, these two strings would cover $V$.
Impossible.
\end{proof}


\subsection{The case of $n$ even}

We now assume that the polygon has an even number of vertices $2r$.

\begin{lemma} \label{PolyLemJamni}
Let $i$ be in $(0,a)$. Let $s_1$ be a string ending in 
$i-1$ and $s_2$ a string starting in $i+1$. Then $s_1$ has length $r$ and 
$s_2$ length $r-1$, or conversely.
\end{lemma}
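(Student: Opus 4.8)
\emph{Step 1: the union is everything but $i$.} The first thing to prove is that $s_1\cup s_2=V\setminus\{i\}$. Since $i\in(0,a)$, the vertex $i$ is not an endpoint of the chord, so the faces of $X$ containing $i$ are just the two boundary edges $\{i-1,i\}$, $\{i,i+1\}$ and the single $2$-cell lying on the short side $[0,a]$. Because $i-1\in s_1$ and $i+1\in s_2$, all of these faces disappear when $X$ is restricted to the complement of $W:=s_1\cup s_2$, so $i$ is an isolated vertex of that restriction. By condition 2 of Proposition~\ref{MaxPropFamKrit} the restriction is acyclic, hence connected, so $V\setminus W=\{i\}$, that is, $s_1\cup s_2=V\setminus\{i\}$. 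In particular $|s_1|+|s_2|\ge n-1=2r-1$, so at least one of the two lengths is $\ge r$.

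\emph{Step 2: placing the ends of the chord.} Both $0$ and $a$ lie in $V\setminus\{i\}=s_1\cup s_2$. Write $s_2=[i+1,q]$ and recall $i+1\le a$: were $q$ on the arc $[0,i-1]$, then $s_2$ would contain the whole arc $[a,0]$, i.e.\ the complement of $(0,a)$, contradicting Lemma~\ref{PolyLemCompl}. Hence $q\in[a,n-1]$, so $0\notin s_2$, $0\in s_1$, and $a\in s_2$. Writing $s_1=[p,i-1]$: if $a\in s_1$ then $s_1\supseteq[a,0]$, again against Lemma~\ref{PolyLemCompl}; so $a\notin s_1$ and $p$ lies on the arc $[a+1,0]$. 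Thus $s_1$ is an arc ending at $i-1$ meeting $[a,0]$ only near $0$, and $s_2$ an arc starting at $i+1$ containing $a$ but not $0$.

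\emph{Step 3: bounding the lengths and concluding.} Using Lemma~\ref{PolyLemSkyv} I transport the strings to the chord: since none of $i+1,\dots,a-1$ is an endpoint of the chord there is a string starting at $a$ of length $\ge|s_2|$, and since none of $i-1,\dots,1$ is an endpoint of the chord there is a string ending at $0$ of length $\ge|s_1|$; by Lemma~\ref{PolyLemCompl} the former lies in $[a,n-1]$ and the latter in $[a+1,0]$. Combining this with condition 1 of Proposition~\ref{MaxPropFamKrit} (no two strings cover $V$), applied against strings supplied by Lemma~\ref{PolyLemCov} and Lemma~\ref{PolyLemEndSta}, one gets $|s_1|\le r$ and $|s_2|\le r$. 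With Step 1 this forces $|s_1\cap s_2|=|s_1|+|s_2|-(2r-1)\in\{0,1\}$. If the intersection is empty, then $|s_1|+|s_2|=2r-1$ with both summands $\le r$, so $\{|s_1|,|s_2|\}=\{r,r-1\}$, as claimed. If the intersection is a single vertex $w$, both strings have length $r$; running the isolated‑vertex argument of Step 1 at $w$ (which by Step 2 sits on the long side, hence is not an endpoint of the chord), after again transporting strings to $w$ via Lemma~\ref{PolyLemSkyv}, produces a configuration contradicting Lemma~\ref{PolyLemCompl}, so this case does not occur.

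\emph{Where the work is.} The substantive part is the arc bookkeeping in Step 3, and in particular the boundary cases $i=1$ and $i=a-1$, in which a chord endpoint is adjacent to $i$ and Lemma~\ref{PolyLemSkyv} cannot be iterated on that side; there one argues directly from Lemma~\ref{PolyLemCov} and Lemma~\ref{PolyLemCompl}. The even parity of $n$ is what makes the statement true: $V\setminus\{i\}$ has odd cardinality $2r-1$, which is exactly what prevents the two essentially complementary strings from having the same length.
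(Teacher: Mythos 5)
Your Step 1 is correct, and it even sharpens what the paper uses: since $i$ is isolated in the restriction of $X$ to the complement of $s_1\cup s_2$ and that restriction is acyclic, the complement is exactly $\{i\}$, so $|s_1|+|s_2|\ge 2r-1$. The genuine gap is in Step 3, at the inequality $|s_1|,|s_2|\le r$, which is the real content of the lemma and which you only assert. The two strings you produce with Lemma \ref{PolyLemSkyv} --- one ending at $0$ and contained in $[a+1,0]$, one starting at $a$ and contained in $[a,n-1]$ --- both miss the open arc $(0,a)$ entirely, so condition 1 of Proposition \ref{MaxPropFamKrit} applied to this pair can never produce a covering contradiction; and the strings supplied by Lemmas \ref{PolyLemCov} and \ref{PolyLemEndSta} come with no length control, so they cannot serve as the missing partner either. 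The containments only bound the lengths by $2r-a$, which exceeds $r$ whenever $a<r$. What is actually needed (and is the paper's first paragraph) is to transport a hypothetical string of length $\ge r+1$ both ways around to the \emph{same} chord endpoint: push its end back to $0$, note via Lemma \ref{PolyLemCompl} that it must start outside $[0,a]$, then push its start forward through the long arc until it starts at $0$; the string ending at $0$ and the string starting at $0$, each of length $\ge r+1$, then cover all $2r$ vertices, contradicting condition 1 (and symmetrically at $a$ for $s_2$). That double transport is absent from your argument, so the bound $|s_1|,|s_2|\le r$ is unproved.

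Two smaller points. In Step 2, ``hence $q\in[a,n-1]$'' does not follow merely from excluding $q\in[0,i-1]$; you also need $a\in s_2$, which you only obtain afterwards from $a\notin s_1$ together with Step 1, so the order of deductions needs repair (harmless, since the needed facts are all present). In the final case $|s_1|=|s_2|=r$, your claimed contradiction with Lemma \ref{PolyLemCompl} is not substantiated and is the wrong target: transporting either string toward the chord never yields a string containing the complement of $(0,a)$. The correct contradiction is again with condition 1: as in the paper, the end $w=i+r$ of $s_2$ is not a chord endpoint, so pushing it one step back gives a string of length $\ge r$ ending at $i+r-1$, which together with $s_1=[i+r,i-1]$ covers $V$; alternatively, apply Lemma \ref{PolyLemCov} at $w$ and pair the long string it yields with $s_1$ or $s_2$.
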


\begin{proof} Suppose $s_1$ has length $\geq r+1$. By Lemma \ref{PolyLemSkyv}
we can then step by step push it back
to a string ending at $0$ which also has length $\geq r+1$. 
It does not begin in $[0,a]$ by Lemma \ref{PolyLemCompl},
so we can also push
it forward to a string starting at $0$ and having length $\geq r+1$. 
Together with the one ending in $0$, these two would cover the vertices
of $X$. Impossible. 

  Thus $s_1$ has length $\leq r$, and similarly $s_2$ has length $\leq r$.
If $s_2$ has length $\leq r-2$, the complement of $s_1 \cup s_2$ consists
of $\{i\}$ together with another disjoint component. Impossible. Thus the strings
have length $r$ or $r-1$. If both had length $r-1$, the complement of 
$s_1 \cup s_2$ will again consist of at least two components. If both had length
$r$, we could push the end of $s_2$ one step back to get $s_2^\prime$ such 
that $s_1 \cup s_2^\prime$ covers $V$. Impossible. Thus the lengths are $r$
and $r-1$.
\end{proof}

Thus either there is a string of length $r$ containing $0$ but not $a$, or
there is a string of length $r$ containing $a$ but not $0$. These cases turn
out to be mutually exclusive and seperates the treatment into two cases. These
are symmetric and we shall consider the first case.

\begin{lemma} \label{PoljLemOnA} Suppose a string in $\gF$ containing $0$ but not $a$, 
has length $r$. 

a. Then all strings in $\gF$ containing $0$ have length $r$, and all strings
of length $r$ containing $0$ are in $\gF$.

b. All strings in $\gF$ which do not contain $0$ or $1$ have length $r-1$, and
all strings of length $r-1$ not containing $0$ and $1$ are in $\gF$.

\end{lemma}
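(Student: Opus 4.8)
The plan is to use Lemma~\ref{PolyLemJamni} and the hypothesis to pin down all the \emph{long} strings that $\gF$ is forced to contain, and then to use condition~2 of Proposition~\ref{MaxPropFamKrit} (the complement of a union of members of $\gF$ is acyclic, in particular connected) to exclude all \emph{short} ones. First I would record two preliminaries: as in the proof of Lemma~\ref{PolyLemJamni} no string of $\gF$ has length $>r$; and for $i\in(0,a)$ Lemma~\ref{PolyLemJamni} forces every string ending at $i-1$ to have one common length $e_{i-1}\in\{r-1,r\}$, every string starting at $i+1$ to have one common length $b_{i+1}\in\{r-1,r\}$, with $\{e_{i-1},b_{i+1}\}=\{r-1,r\}$, while pushing endpoints with Lemma~\ref{PolyLemSkyv} gives $e_0\ge\cdots\ge e_{a-2}$ and $b_2\le\cdots\le b_a$.

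Next comes the rotation argument. Write the hypothesised string as $s=[r+1+\ell,\ell]$ with $0\le\ell\le a-1$ (it has length $r$, contains $0$, avoids $a$). Since $r$ is the maximal length, pushing its start forward or its end backward via Lemma~\ref{PolyLemSkyv} again produces a string of length exactly $r$, i.e.\ a rotation of the previous one, as long as the endpoint being moved is not $0$ or $a$. Rotating the end backwards (the end runs through $\ell,\ell-1,\dots,1$, none equal to $0$ or $a$) gives $[r+1,0]\in\gF$, and then rotating the start forwards (the start runs through $r+1,\dots,2r-1$, none equal to $0$ or $a$ since $a\le r$) gives $[r+2,1],\dots,[0,r-1]$; these are precisely the $r$ strings of length $r$ containing $0$, so all of them lie in $\gF$. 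Condition~1 now forbids a length-$r$ string not containing $0$ (its complement is a length-$r$ string through $0$, hence in $\gF$, and the two would cover $V$), so the length-$r$ members of $\gF$ are exactly the strings of length $r$ through $0$. In particular there is one ending at every vertex $0,\dots,r-1$, whence $e_v=r$ for $v\in\{0,\dots,a-2\}$ and therefore $b_u=r-1$ for $u\in\{2,\dots,a\}$; pushing these length-$(r-1)$ strings forward (each push gives length $\ge r-1$, and length $r$ is impossible because a length-$r$ string starting at $u+1\in\{3,\dots,r\}$ does not contain $0$) yields $[u,u+r-2]\in\gF$ for all $u\in\{2,\dots,r\}$ — all the length-$(r-1)$ strings avoiding $\{0,1\}$ needed below.

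To finish (a), suppose $t=[p,\ell]\in\gF$ contains $0$ with $|t|<r$; then $\ell\le r-2$ and the start satisfies $p\ge \ell+r+2$ (cyclically). Put $W=t\cup[\ell+2,\ell+r]$, where $[\ell+2,\ell+r]\in\gF$ is one of the length-$(r-1)$ strings just produced. Its complement is $\{\ell+1\}\sqcup\{\ell+r+1,\dots,p-1\}$ with both pieces nonempty, and the vertex $\ell+1$ — whose only possible chord, namely when $\ell+1=a$, runs to $0\in W$ — has all its $X$-neighbours in $W$, so it is isolated in $X$ restricted to the complement of $W$. Hence that restriction is disconnected, contradicting condition~2 of Proposition~\ref{MaxPropFamKrit}. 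So every string of $\gF$ through $0$ has length exactly $r$, which with the previous paragraph gives (a). Part (b) is entirely parallel: the bound $\le r-1$ for strings avoiding $\{0,1\}$ is again the condition~1 argument, the membership of all length-$(r-1)$ strings avoiding $\{0,1\}$ (including the one starting at $r+1$, recovered from Lemma~\ref{PolyLemCov} at the vertex $r$) is obtained as above, and a disconnection argument of the same form excludes any string avoiding $\{0,1\}$ of length $<r-1$.

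The hard part is the bookkeeping forced by the two chord endpoints $0$ and $a$: they block the pushing of Lemma~\ref{PolyLemSkyv}, so at each rotation one has to check that no start or end must be dragged across $0$ or $a$ before the target string is reached, and the wrap-around near the antipodal vertex $r$, together with the degenerate case $a=r$, has to be dealt with separately.
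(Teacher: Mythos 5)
Your overall strategy --- pinning down the long strings by pushing with Lemma~\ref{PolyLemSkyv} and the dichotomy of Lemma~\ref{PolyLemJamni}, then excluding short strings by a disconnection argument --- is the same as the paper's, and your part (a) (the rotation producing all length-$r$ strings through $0$, the complementation argument for condition 1, and the isolated-vertex argument against short strings through $0$) is in substance the paper's argument, even slightly more explicit. But there is a genuine gap at exactly the point you flag as ``the hard part''. To produce the length-$(r-1)$ strings avoiding $\{0,1\}$ you push the strings starting at $u\in\{2,\dots,a\}$ forward; Lemma~\ref{PolyLemSkyv} only permits pushing a string whose start is \emph{not} a chord endpoint, so your chain is blocked the moment the start reaches $a$ (and for $a=2$ it cannot even begin). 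Hence the strings $[u,u+r-2]$ with $u\in\{a+1,\dots,r+1\}$ are never obtained by the mechanism you describe, and the fallback you mention --- Lemma~\ref{PolyLemCov} at the vertex $r$ --- does not supply them: it only says that \emph{some} string starting at $r+1$ together with \emph{some} string ending at $r-1$ covers $V\setminus\{r\}$, and that string starting at $r+1$ may well be $[r+1,0]$ (already known to lie in $\gF$) rather than $[r+1,2r-1]$. These missing strings are moreover what your exclusion of short strings through $0$ relies on: the auxiliary set $[\ell+2,\ell+r]$ has start $>a$ as soon as $\ell\ge a-1$. The paper reaches these strings from the other side: by Lemma~\ref{PolyLemEndSta} there is a string ending at $-1$; pushing its end backwards runs through $2r-2,\dots,r$ and never meets a chord endpoint; the covering argument against $[r+1,0]$ caps every length in this chain at $r-1$, and a disconnection argument with $[2,r]$ forces equality, which delivers all of $[2,r],\dots,[r+1,2r-1]$ at once.

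A secondary weak point is your preliminary claim that no string of $\gF$ has length $>r$, justified ``as in the proof of Lemma~\ref{PolyLemJamni}''. That argument needs the string's end to lie in $[0,a-2]$ (or its start in $[2,a]$) so that both endpoints can be pushed to $0$ without crossing $a$; it says nothing about, say, a string with start in $[1,a]$ and end beyond $a$, and in fact the general bound only becomes available after parts (a) and (b) are established, so using it up front is circular. You invoke this bound to keep the rotated strings at length exactly $r$ along the start-chain, whose members are not covered by Lemma~\ref{PolyLemJamni}. The paper avoids the blanket bound entirely: if a length ever jumped above $r$ along either chain, the two terminal strings anchored at $0$ (one starting at $0$, one ending at $0$) would cover $V$, contradicting condition 1 of Proposition~\ref{MaxPropFamKrit}. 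This part of your argument is repairable by that observation, but as written it is a gap, and together with the blocked push above it means part (b), and through it the completion of part (a), is not actually proved.
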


\begin{proof} Let $s$ be the string of length $r$ in $\gF$ containing $0$ but not 
$a$. We can push it successively forward until we get a string starting in $0$.
We can also start with $s$ and push it successively backwards to a string ending 
in $0$. If the  length jumps up at some stage, these two strings will cover $V$. 
Impossible. Thus all these strings have length $r$.

Now let $s$ be a string starting in $2$. By Lemma \ref{PolyLemJamni}, $s$
has length $r-1$. We can successively push it forward, one step a time, until it
starts in $a$. If it now had length $\geq r$, then since there is a string of length
$r$ ending in $a-1$, these would cover $X$. Impossible. 
Thus all of these strings have length $r-1$. 

Let now $s$ be a string in $\gF$ ending in $-1$. We can successively push it
backwards, getting $s^\prime$ ending in $-r$. Since there is a string
$[-r+1,0]$, $s^\prime$ must start  at $2$ or later, in order to avoid two
strings covering $V$. Thus all such strings have length $\leq r-1$.
Now there is a string $t$ starting at $2$ of length $r-1$. If $s$ (ending in $-1$)
had length $\leq r-2$, the complement of $s \cup t$ would be disconnected.
Thus $s$ has length $r-1$, and we get part b.
\end{proof}

\begin{lemma} In the situation of Lemma \ref{PoljLemOnA}, 
the family $\gF$ has one string starting in $1$ and
this is the string $[1,a-1]$.
\end{lemma}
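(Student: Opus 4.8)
The plan is to prove both assertions at once by showing that \emph{every} string of $\gF$ starting at $1$ equals $[1,a-1]$; that at least one such string exists is part of Lemma \ref{PolyLemEndSta}, and uniqueness then follows.

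So fix a string $[1,b]\in\gF$, where $1\le b\le 2r-1$ (by Lemma \ref{PoljLemOnA}(a) a string of $\gF$ containing $0$ has length $r<2r$, so $\gF$ contains no string equal to all of $V$). The first step is that $b\le r-1$. Indeed, the vertex $1$ is not an endpoint of the chord (since $a\ge 2$), so Lemma \ref{PolyLemSkyv} produces a string of $\gF$ starting at $2$ of length at least $b$; such a string cannot contain $0$ — a string starting at $2$ and containing $0$ would have length at least $2r-1>r$, which is impossible by Lemma \ref{PoljLemOnA}(a) — nor can it contain $1$, so by Lemma \ref{PoljLemOnA}(b) it has length exactly $r-1$, whence $b\le r-1$.

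The decisive step is a connectivity argument. Suppose $b\neq a-1$. Since $b+2\le r+1$, each of the strings $[b+2,b+r],[b+3,b+r+1],\dots,[r+1,2r-1]$ has length $r-1$ and avoids $0$ and $1$, hence lies in $\gF$ by Lemma \ref{PoljLemOnA}(b); their union is the arc $\{b+2,b+3,\dots,2r-1\}$. Put $W=[1,b]\cup\{b+2,\dots,2r-1\}$, a union of members of $\gF$, so that $V\setminus W=\{0,\,b+1\}$. Now $0$ and $b+1$ are not joined by an edge of $X$: we have $b+1\neq 1$ (as $b\ge 1$), $b+1\neq 2r-1$ (as $b\le r-1<2r-2$, using $n=2r\ge 4$), and $b+1\neq a$ (this is the hypothesis $b\neq a-1$), so $\{0,b+1\}$ is neither a polygon edge nor the chord. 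Hence $X$ restricted to $V\setminus W$ is the disjoint union of two points, which is not acyclic — contradicting condition 2 of Proposition \ref{MaxPropFamKrit}. Therefore $b=a-1$.

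I expect the only real obstacle to be finding the right $W$: a string $[1,b]$ with $b\neq a-1$ is harmless on its own — indeed, because of the chord, even the complement of $[1,b]$ itself is connected — and the contradiction appears only after enlarging $W$ with just enough length-$(r-1)$ strings to pare $V$ down to the two vertices $0$ and $b+1$, at which point the chord $\{0,a\}$ no longer helps precisely because $b+1\neq a$. The rest is bookkeeping: the bound $b\le r-1$ both guarantees that these length-$(r-1)$ strings exist and keeps $b+1$ out of $W$, and since the argument applies verbatim to any string of $\gF$ starting at $1$, the uniqueness follows.
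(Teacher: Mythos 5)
Your proof is correct, and the first step (bounding the length of a string starting at $1$ by $r-1$ via Lemma \ref{PolyLemSkyv} and Lemma \ref{PoljLemOnA}) coincides with the paper's. Where you genuinely diverge is in pinning down the endpoint: the paper splits into cases, first ruling out an endpoint $\geq a$ by adjoining a single length-$(r-1)$ string ending at $-1$ and observing the complement is disconnected, and then handling endpoints in $(0,a)$ by invoking Lemma \ref{PolyLemJamni}; you instead handle all $b\neq a-1$ uniformly, exploiting the fact that Lemma \ref{PoljLemOnA}(b) supplies \emph{every} length-$(r-1)$ string avoiding $0$ and $1$, so that the shifted strings $[b+2,b+r],\dots,[r+1,2r-1]$ together with $[1,b]$ pare the complement down to exactly $\{0,b+1\}$, which is a pair of isolated vertices (no polygon edge, and not the chord since $b+1\neq a$), contradicting condition 2 of Proposition \ref{MaxPropFamKrit}. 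Your bookkeeping checks out: $b\le r-1$ guarantees these strings exist and keeps $b+1$ outside the union, and existence plus uniqueness of the string starting at $1$ follow as you say from Lemma \ref{PolyLemEndSta}. The trade-off is mild: the paper's route is shorter because Lemma \ref{PolyLemJamni} already encodes the relevant length constraints near the chord, while yours avoids that lemma altogether and reduces everything to the acyclicity condition, at the cost of assembling a longer covering; both rest on the same underlying mechanism of disconnecting the complement of a union of members of $\gF$.
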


\begin{proof}
Let $s$ begin in $1$. Then $s$ has length $\leq r-1$ since
else we could push it forward one step and get a string of length $r$ starting in $2$.
Impossible. 
If $s$ ends in a point $\geq a$, let $t$ end in $-1$, note that $t$
has length $r-1$. Then $s \cup t$ has a complement which is disconnected.
Hence $s$ ends in a point in $(0,a)$, According to Lemma \ref{PolyLemJamni}
its endpoint must be $a-1$.
\end{proof}

\begin{proof}[Proof of Theorem \ref{PolyThmKorde} [Even number of vertices]]
The Lemmata above show that a maximal family in $\CM_\dagger(X)$ must be either
$\gF_1$ or $\gF_2$. It is an easy matter to verify that they also satisfy
the criteria of Proposition \ref{MaxPropFamKrit}.
\end{proof}

\subsection{The case of $n$ odd} \label{SecPolyodd}

We now assume that the number of vertices of the polygon is odd equal to 
$2r+1$. We let $\gF$ be a maximal CM family of subsets of vertices.

\begin{lemma} \label{PolyLemOddi}
a. Let $i$ be in $(0,a)$. Let $s_1$ end in $i-1$ and $s_2$ begin in $i+1$.
Then both $s_1$ and $s_2$ have length $r$. 

b. All strings in $\gF$ have length $\leq r+1$. 
\end{lemma}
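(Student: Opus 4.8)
The plan is to run both parts with the same toolkit as the even case: slide strings toward the two chord endpoints $0$ and $a$ using Lemma~\ref{PolyLemSkyv}, and then contradict either condition~1 of Proposition~\ref{MaxPropFamKrit} (no two strings of $\gF$ cover $V$) or Lemma~\ref{PolyLemCompl} (no string of $\gF$ contains the complement $[a,0]$ of $(0,a)$).

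\emph{Part a.} First I prove $|s_1|\le r$. If $|s_1|\ge r+1$, then since $i-1,i-2,\ldots,1$ all lie in $(0,a)$ and hence are not chord endpoints, Lemma~\ref{PolyLemSkyv} slides $s_1$ to a string $t\in\gF$ ending at $0$ with $|t|\ge r+1$. The start $b$ of $t$ cannot lie in $\{0,1,\ldots,a\}$: $b=0$ would force $t=V$ (impossible, since a single set cannot cover $V$), and $b\in\{1,\ldots,a\}$ would give $t\supseteq[a,0]$, contradicting Lemma~\ref{PolyLemCompl}. So $b$ lies strictly between $a$ and $0$, and sliding the start forward through the non-chord vertices $b,b+1,\ldots,n-1$ gives $t'\in\gF$ starting at $0$ with $|t'|\ge r+1$; but an arc of length $\ge r+1$ ending at $0$ together with one of length $\ge r+1$ starting at $0$ exhausts all $2r+1$ vertices, contradicting condition~1 of Proposition~\ref{MaxPropFamKrit}. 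The bound $|s_2|\le r$ then follows from the reflection $k\mapsto a-k$, which is a symmetry of $X$ fixing the chord, carries one maximal CM family to another, and takes a string ending at $j-1$ to a string starting at $a-j+1$. For the lower bound, Lemma~\ref{PolyLemCov} gives $\sigma_1$ ending at $i-1$ and $\sigma_2$ starting at $i+1$ with $\sigma_1\cup\sigma_2=V\setminus\{i\}$; the upper bounds force $|\sigma_1|+|\sigma_2|\le 2r=|V\setminus\{i\}|$, so $\sigma_1,\sigma_2$ are disjoint of length $r$. If some $s_1$ ending at $i-1$ had $|s_1|\le r-1$, then $s_1\cup\sigma_2$ would miss $i$ and at least one further vertex, while $i$ would be isolated in $X$ restricted to that complement (its neighbours $i-1\in s_1$ and $i+1\in\sigma_2$ are removed); so the complement is disconnected, contradicting condition~2 of Proposition~\ref{MaxPropFamKrit}. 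Hence $|s_1|=r$, and symmetrically $|s_2|=r$.

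\emph{Part b.} Suppose $s\in\gF$ has $|s|\ge r+2$; then $\overline{s}$ is an arc of at most $r-1$ vertices, and by part~a the string $s$ ends outside $\{0,\ldots,a-2\}$ and starts outside $\{2,\ldots,a\}$. Slide $s$ until an endpoint reaches a chord endpoint. If we reach a string $t$ of length $\ge r+2$ ending at $0$, the argument of part~a applies verbatim: either $t\supseteq[a,0]$ (contradicting Lemma~\ref{PolyLemCompl}), or, after sliding its start to $0$, two arcs of length $\ge r+2$ cover $V$ (contradicting condition~1 of Proposition~\ref{MaxPropFamKrit}).

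The real obstacle is the case where the chord blocks the slide on one side, so that the only reachable endpoint is $a$ and we obtain $t\in\gF$ of length $\ge r+2$ ending at $a$; now $t$ neither contains $[a,0]$ nor can be slid past $a$, and the clean two-arc argument is unavailable. To handle it I would apply Lemma~\ref{PolyLemCov} at the vertex just after the arc $\overline{t}$: there is a string $w$ ending at the far endpoint of $\overline{t}$, and if $|w|\ge|\overline{t}|$ then $w\supseteq\overline{t}$, so $t\cup w=V$ — contradiction; otherwise $w$ lies strictly inside $\overline{t}$, and its companion string starting just past $\overline{t}$ is forced, by condition~2 of Proposition~\ref{MaxPropFamKrit}, to cover all of $\overline{t}$ except its first vertex, and tracing which vertex is then left uncovered by this companion together with $t$ once more produces a disconnected complement or a string containing $[a,0]$. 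Making this bookkeeping uniform over the few degenerate layouts — $\overline{t}$ short, $\overline{t}$ meeting a chord endpoint, or $a$ near $r$ — is the delicate point; a possible simplification is to feed $s$ into Proposition~\ref{MaxPropT} (with $\dim X=2$) to obtain two strings $S_1,S_2\in\gF$ with $s\cup S_1\cup S_2=V$ missing a chosen vertex of $s$, and run the endpoint analysis on $S_1$ and $S_2$ instead.
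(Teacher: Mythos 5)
Your part~a is essentially correct and follows the paper's route (the paper simply says part~a ``goes as Lemma~\ref{PolyLemJamni}''): slide a string of length $\ge r+1$ ending in $(0,a)$ back to one ending at $0$, rule out a start in $[0,a]$ via Lemma~\ref{PolyLemCompl}, slide the start forward to $0$, and contradict condition~1 of Proposition~\ref{MaxPropFamKrit}; then get the exact value $r$ from Lemma~\ref{PolyLemCov} together with a disconnected-complement argument. That all checks out.

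Part~b, however, has a genuine gap, located exactly where you flag ``the real obstacle'': the case of a string $t\in\gF$ of length $\ge r+2$ ending at $a$ is only sketched (``I would apply\dots'', ``the delicate point'', ``a possible simplification''), with the decisive bookkeeping admittedly not carried out, so the proof of~b is incomplete as written. Moreover the obstacle is illusory, and the clean two-arc argument is available after all: a string ending at $a$ whose start lies in $[0,a]$ has length at most $a+1\le r+1$, so $t$ must start strictly between $a$ and $0$; since none of $a+1,\ldots,n-1$ is a chord endpoint, Lemma~\ref{PolyLemSkyv} lets you slide the \emph{start} of $t$ forward all the way around to $0$, producing a string in $\gF$ starting at $0$ of length $\ge r+2$. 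A string ending at $0$ exists by Lemma~\ref{PolyLemEndSta} and has length $r$ by part~a (take $i=1$; note $a\ge 2$), and these two strings together cover all $2r+1$ vertices, contradicting condition~1 of Proposition~\ref{MaxPropFamKrit}. This is precisely the paper's proof: when the chord blocks one endpoint, slide the \emph{other} endpoint through the chord-free arc to $0$; there is no need for the companion-string analysis or for Proposition~\ref{MaxPropT} (which, incidentally, produces a cover of all of $V$ in which the chosen vertex lies only in $T$, not a cover missing a vertex). Also note that your first sub-case, a string of length $\ge r+2$ ending at $0$, is already impossible directly by part~a, so only the $a$-case ever needed an argument.
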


\begin{proof} Part a. goes as Lemma \ref{PolyLemJamni}.
To show part b., let $s$ be a string of length $\geq r+2$. It does not both 
start and end in $[0,a]$ by Lemma \ref{PolyLemCompl}. 
Suppose it ends outside of $[0,a]$. Then we can push
it backward to a string containing $0$, and then forward to a string
starting in $0$, of length $\geq r+2$. But by part a. there is a string ending in $0$ of
length $r$ and these two would cover $V$. Impossible.
\end{proof}

\begin{lemma}
All strings in $\gF$ containing $[0,a]$ have the same length, and this length
is either $r$ or $r+1$. Either i) all strings of length $r+1$ containing
$[0,a]$ are in $\gF$,  or ii) all strings of length $r$ containing $[0,a]$ 
are in $\gF$, in which case $a < r$, or iii) there are no strings containing $[0,a]$, 
in which case $a = r$.  
\end{lemma}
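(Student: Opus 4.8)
The plan is to analyze strings in $\gF$ containing $[0,a]$ by a pushing argument similar to Lemma \ref{PolyLemSkyv}, but carried out on both the left and right endpoints of such a string. First I would take a string $s \in \gF$ containing $[0,a]$ of length $\ell$. Since $[0,a]$ is fixed inside $s$, the complement of $s$ lies entirely outside $[0,a]$, hence $s$ cannot start or end in $(0,a)$; its left endpoint is some vertex in $[a+1, n-1]\cup\{0\}$ and its right endpoint is in $\{a\}\cup[a+1,\ldots]$ (reading cyclically on the arc avoiding the interior of $[0,a]$). I would then use Lemma \ref{PolyLemOddi}.a: for any $i \in (0,a)$ there are strings $s_1$ ending in $i-1$ and $s_2$ beginning in $i+1$, both of length exactly $r$. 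These length-$r$ strings do not meet $(0,a)$ except at their forced endpoints, so they are ``building blocks'' sitting on the long arc, and they must combine with $s$ without covering $V$, which forces a bound on $\ell$.

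The key steps, in order: (1) observe that any string containing $[0,a]$ has both endpoints on the long arc (the arc from $a$ around to $0$ not through $(0,a)$), so by Lemma \ref{PolyLemSkyv} it can be slid along that arc in either direction without its length decreasing; (2) slide it to a string $s'$ ending exactly at $a$ and then (being unable to start inside $[0,a]$ other than at $0$, by how the complement sits) note it starts at $1$ or earlier — then pair $s'$ against a length-$r$ string of the form given by Lemma \ref{PolyLemOddi}.a starting at $a+1$ (which together with a string ending at $0$ covers $V\setminus\{$ one point $\}$) to deduce $\ell \le r+1$; combined with $\ell \ge$ length of $[0,a]$ $= a+1$ this already constrains things; (3) prove all strings in $\gF$ containing $[0,a]$ have the \emph{same} length: if $\gF$ contained one of length $r+1$ and one of length $r$ (or smaller), sliding both to end at $a$ and comparing with the length-$r$ strings forces two strings to cover $V$, a contradiction — here Lemma \ref{PolyLemSkyv} guarantees the slid strings stay in $\gF$; (4) split into cases by this common length: if the length is $r+1$, verify all length-$(r+1)$ strings containing $[0,a]$ are in $\gF$ by the sliding/maximality argument (any one of them is reachable from any other by Lemma \ref{PolyLemSkyv} pushes that cannot increase length past $r+1$); if the length is $r$, the same argument gives all length-$r$ strings containing $[0,a]$, and since such a string has $a+1$ vertices we need $a+1 \le r$, i.e. $a < r$; (5) finally, if no string contains $[0,a]$ at all, show $a = r$: a string containing $[0,a]$ would have to have length $\ge a+1$, and by Lemma \ref{PolyLemOddi} its length is $\le r+1$, so if $a+1 \le r$ a direct construction (push a length-$r$ string ending at $a-1$ forward, or combine existing strings) would produce one, forcing $a = r$ (the only slack case being $a+1 = r+1$).

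The main obstacle I expect is step (3)–(4): pinning down that \emph{all} strings of the relevant fixed length containing $[0,a]$ actually lie in $\gF$, not just that a string of that length exists. The sliding lemma (Lemma \ref{PolyLemSkyv}) only applies when the relevant endpoint is \emph{not} the endpoint of a chord, and here the chord is precisely between $0$ and $a$, so when sliding a string whose endpoint reaches $0$ or $a$ I cannot push further past it; I will need to argue carefully that the family of length-$\ell$ strings containing $[0,a]$ is ``connected'' under the allowed slides (those touching only vertices on the long arc, away from $0$ and $a$), so that membership of one forces membership of all. A secondary subtlety is keeping the cyclic bookkeeping straight — which endpoint is ``start'' and which is ``end'' relative to the orientation of strings — and making sure the ``two strings cover $V$'' contradictions are genuinely contradictions (i.e. the two strings in question are both in $\gF$ and their union really is all of $V$, using that a length-$r$ plus a length-$(r+1)$ string on $2r+1$ vertices leaves nothing uncovered only when they are positioned as claimed).
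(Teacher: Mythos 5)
There is a genuine gap at the heart of your plan: nothing in it excludes strings containing $[0,a]$ of length between $a+1$ and $r-1$. Your step (2) only yields $a+1\le \ell\le r+1$ (and the upper bound is just Lemma \ref{PolyLemOddi}.b anyway; the ``length-$r$ string starting at $a+1$'' you invoke is not provided by Lemma \ref{PolyLemOddi}.a, since that lemma only treats $i$ in the open interval $(0,a)$ and $a$ is a chord endpoint). Your step (3) only compares a length-$(r+1)$ string with a shorter one, and step (4) already assumes the common length is $r$ or $r+1$; so if $a\le r-2$ nothing you propose rules out, say, all strings containing $[0,a]$ having length $r-1$. The paper needs a separate argument for exactly this: if $t\supseteq[0,a]$ has length $\le r-1$ and ends at $i-1$, then the string starting at $i+1$ must, together with $t$, cover $V\backslash\{i\}$ (acyclicity forces the complement to be the single isolated vertex $i$), hence has length $\ge r+1$; pushing it forward until it starts at $r+1$ produces a length-$(r+1)$ string ending at $0$, contradicting Lemma \ref{PolyLemOddi}.a. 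Some argument of this kind is indispensable and is absent from your proposal.

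The mechanism of your step (3) also does not work as described. Sliding both strings so that they end at $a$ produces the nested strings $[a-r,a]$ and $[a-r+1,a]$, whose union is far from $V$, and the length-$r$ strings guaranteed by Lemma \ref{PolyLemOddi}.a have ends in $[0,a-2]$ and starts in $[2,a]$, so they do not furnish a partner covering $V$ with either slid string; likewise ``push a length-$r$ string ending at $a-1$ forward'' in your step (5) is not a move Lemma \ref{PolyLemSkyv} permits (one may only advance a start or retract an end). The paper's order of argument is the reverse of yours, and the order matters: it first proves completeness in the length-$(r+1)$ case by pushing back and forth over $[0,a]$ (your step (4), which is fine), and only then excludes a coexisting length-$r$ string $s=[i+1,i+r]$: the auxiliary string $u$ ending at $i-1$ is forced to have length exactly $r$, so its complement $[i,i+r]$ is a length-$(r+1)$ string containing $[0,a]$ and hence lies in $\gF$ by the completeness just established, and these two members of $\gF$ cover $V$, contradicting condition 1 of Proposition \ref{MaxPropFamKrit}. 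Without completeness in hand, your ``two strings cover $V$'' contradiction has no second member of $\gF$ to use. Finally, the existence needed for case iii) (if $a<r$ then some string in $\gF$ contains $[0,a]$) comes from pushing a string ending at $0$ forward until it starts at $0$, not from the construction you sketch.
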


\begin{proof} There is a string $s$ ending in $0$, it has length $r$ by
Lemma \ref{PolyLemOddi}. By Lemma \ref{PolyLemCompl} it does not start in 
$[0,a]$. We may then push it forward to a string $t$ starting in $0$, having
length $\leq r+1$. Suppose the length is $r+1$.
We can freely push it
back and forth over $[0,a]$ and its length will 
not decrease by Lemma \ref{PolyLemSkyv}, and not increase by Lemma 
\ref{PolyLemOddi}, so
all such strings are in $\gF$. Suppose there in this case also were a string $s$
in $\gF$ of length $r$ containing $[0,a]$. Suppose the distance from its
start point $i+1$ to $0$ is less or equal to the distance from its end point to $a$. 
There is a string $u$ ending in $i-1$ which together with $s$ covers
$V\backslash \{ i\}$. The length of $u$ cannot be $r+1$ since we then could
push $u$ one step forward and this string together with $s$ would cover $V$.
Thus $u$ has length $r$. But then the complement of $u$ would be a string
$u^\prime$ containing $[0,a]$ of length $r+1$ and thus be in $\gF$. Thus
$s$ and $u^\prime$ together would cover $V$. Impossible.

\medskip
Suppose now there is a string $t$ of length $\leq r-1$ containing $[0,a]$, 
and let $t$ end in $i-1$. There is
a string $s$ starting in $i+1$. Since $s \cup t$ must be $V \backslash \{ i \}$,
$s$ must have length $\geq r+1$. We can then successively push $s$ forward
till it starts in $r+1$. Its length must by Lemma \ref{PolyLemOddi}.b be $r+1$,
and so it ends in $0$. But this is impossible by part a. of the same lemma.
Thus the length of $t$ is $r$ or $r+1$.

\medskip
Suppose now the string $t$ in the beginning of the proof had length $r$. (Then $a < r$.)
We can push it back
and forth over $[0,a]$. All these strings
are in $\gF$, and the length does not increase since by the first part of the proof, strings
covering $[0,a]$ cannot both have length $r$ and $r+1$.

\end{proof}

The argument now splits into two cases corresponding to whether 
$\gF$ contains strings of length $r+1$ or not. These will give the
two maximal families $\gF_1$ and $\gF_2$.

\begin{lemma} a. Suppose $\gF$ has strings of length $r+1$. 
Then all strings in $\gF$ disjoint from $[0,a]$ have length $r-1$
and all such strings of length $r-1$ are in $\gF$.

b. Suppose $\gF$ has no strings of length $r+1$.  Then 
all strings in $\gF$ disjoint from $[0,a]$ have length $r$
and all such strings of length $r$ are in $\gF$

\end{lemma}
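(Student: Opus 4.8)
The plan is to prove part a.; part b.\ follows by the same argument with ``$r+1$'' replaced throughout by ``$r$'' and ``$r-1$'' by ``$r$'' (in part b.\ the upper bound is even immediate, since by hypothesis $\gF$ has no string of length $r+1$, so Lemma~\ref{PolyLemOddi}b.\ already forces every string of $\gF$ to have length $\le r$). Throughout part a.\ I use that, since $\gF$ contains a string of length $r+1$, we are in case i) of the previous lemma: $\gF$ contains \emph{every} string of length $r+1$ containing $[0,a]$, and no string of $\gF$ containing $[0,a]$ has any other length. (A string of $\gF$ of length $r+1$ must contain $[0,a]$: the two ``exceptional'' candidates $[1,r+1]$ and its mirror image start, after one application of Lemma~\ref{PolyLemSkyv}, at a vertex of $\{2,\dots,a\}$ and so would have length $r$ by Lemma~\ref{PolyLemOddi}a., while a length $r+1$ string disjoint from $[0,a]$ forces case i) anyway --- under case ii) its complement, a length $r$ string containing $[0,a]$, would lie in $\gF$ and cover $V$ with it, and under case iii) there is no room. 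These are short checks.)

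\emph{Upper bound.} Let $w\in\gF$ be disjoint from $[0,a]$; I claim $|w|\le r-1$. Suppose instead $|w|\ge r$. By Lemma~\ref{PolyLemOddi}b.\ then $|w|\in\{r,r+1\}$, and since $w$ misses all of $[0,a]$ the complementary string $V\setminus w$ contains $[0,a]$. If $|w|=r$ then $|V\setminus w|=r+1$, so $V\setminus w\in\gF$; but $w\cup(V\setminus w)=V$, contradicting Proposition~\ref{MaxPropFamKrit}.1. If $|w|=r+1$, let $v$ be $V\setminus w$ enlarged by the vertex of $w$ adjacent to it; then $v$ is a string of length $r+1$ containing $[0,a]$, hence $v\in\gF$, and $v\cup w=V$, again contradicting Proposition~\ref{MaxPropFamKrit}.1. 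So $|w|\le r-1$.

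\emph{Exact length and completeness.} Applying Proposition~\ref{MaxPropFamKrit}.3 to the pair $F=\{0,a\}$ (the chord, a $1$-cell of $X$) and $G=$ the $2$-cell on the long arc, we get a string $S\in\gF$ with $S\cap\{0,a\}=\emptyset$ and $S$ meeting $\{a+1,\dots,n-1\}$; being a string that avoids both $0$ and $a$, $S\sus\{a+1,\dots,n-1\}$, i.e.\ $S$ is disjoint from $[0,a]$. Thus $\gF$ has at least one such string, and by the upper bound it has length $\le r-1$. That no string of $\gF$ disjoint from $[0,a]$ can have length $\le r-2$ is then shown by the same pushing-and-covering mechanics as in the even-case Lemma~\ref{PoljLemOnA}: given such a $w=[i,j]$ one pushes it with Lemma~\ref{PolyLemSkyv} and invokes Lemma~\ref{PolyLemCov} at a suitable vertex to obtain two members of $\gF$ whose union has disconnected restriction, contradicting Proposition~\ref{MaxPropFamKrit}.2. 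Hence every string of $\gF$ disjoint from $[0,a]$ has length exactly $r-1$. Finally, let $w_0$ be any string of length $r-1$ disjoint from $[0,a]$ and suppose $w_0\notin\gF$. Put $\gF'=\gF\cup\{w_0\}$: condition~1 of Proposition~\ref{MaxPropFamKrit} holds since $|w_0|+|S|\le(r-1)+(r+1)=2r<n$ for every $S\in\gF$; condition~3 is monotone; and condition~2 is verified by a Mayer-Vietoris argument as in the Claim in the proof of Lemma~\ref{MaxLemXbet}, using that $w_0$ is a connected string disjoint from $[0,a]$ (here the exact-length statement just proved is what keeps the relevant complements connected). Also the restriction of $X$ to $w_0$ is connected, so $\gF'\in\CM_\dagger(X)$. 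Since every string of $\gF$ disjoint from $[0,a]$ has length exactly $r-1=|w_0|$, $w_0$ is not a disjoint union of members of $\gF$, so $(\gF')^{\red}$ lies strictly above $\gF$ in the refinement order, contradicting maximality of $\gF$. Hence $w_0\in\gF$, and combined with the upper bound this gives part a. (Alternatively, ``$w_0\in\gF$'' can be obtained directly, as in Lemma~\ref{PoljLemOnA}, from Lemma~\ref{PolyLemEndSta} and Lemma~\ref{PolyLemSkyv} once the exact length is known.)

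The step I expect to be the real work is the middle one: excluding strings of $\gF$ disjoint from $[0,a]$ of length $\le r-2$, and the companion verification of condition~2 for $\gF'$. Lemma~\ref{PolyLemSkyv} only moves one endpoint of a string at a time and need not preserve disjointness from $[0,a]$, so the covering/acyclicity constraints of Proposition~\ref{MaxPropFamKrit}.1--2 must be fed in vertex by vertex, exactly in the style of Lemma~\ref{PoljLemOnA}; and the acyclicity bookkeeping uses the planarity of the once-subdivided polygon --- its only $2$-cells are the two sub-polygons, so the sole cycle that can appear in the restriction to a complement is the one bounded by the chord, and that cycle is filled precisely when the containing sub-polygon is present --- together with Mayer-Vietoris, as in Lemma~\ref{MaxLemXbet}. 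Everything else is routine.
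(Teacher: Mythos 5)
Your \emph{Upper bound} step is fine and in fact matches the paper's argument (exclude length $r$ via the complementary string of length $r+1$ containing $[0,a]$, exclude length $r+1$ via an enlarged complement), and your parenthetical checking that the hypothesis of a.\ really puts us in case i) of the preceding lemma is a worthwhile addition that the paper leaves implicit. But the rest of the proof is not actually there. The exclusion of strings of length $\le r-2$ is only described as ``the same pushing-and-covering mechanics as in Lemma~\ref{PoljLemOnA}'' and explicitly deferred as ``the real work''; that step is never carried out. (The paper's argument is short and concrete: if $s$, disjoint from $[0,a]$, ends at $i-1\neq -1$, then $i$ is not a chord endpoint, so a string of $\gF$ starting at $i+1$ (Lemma~\ref{PolyLemEndSta}) together with $s$ leaves $i$ isolated in the complement, whence by acyclicity it must cover $V\setminus\{i\}$ and so have length $\ge r+2$, contradicting Lemma~\ref{PolyLemOddi}b; symmetrically for the start, forcing $s=[a+1,-1]$, which is too long.)

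The completeness step is the more serious gap. Your main route --- add a missing string $w_0$ of length $r-1$ and contradict maximality --- stands or falls on verifying condition 2 of Proposition~\ref{MaxPropFamKrit} for $\gF\cup\{w_0\}$, i.e.\ acyclicity of the restriction of $X$ to the complement of \emph{every} union of $w_0$ with members of $\gF$. You wave at the Mayer--Vietoris Claim from Lemma~\ref{MaxLemXbet}, but that claim applies when the added sets are pieces of an existing member of $\gF$ (so the relevant complements are already known acyclic); here $w_0$ is a new string with no such relation, and at this point in the lemma sequence the membership of $\gF$ is not yet pinned down, so there is nothing to feed into such an argument. The paper avoids this entirely: it proves membership directly, with no appeal to maximality and no re-verification of Proposition~\ref{MaxPropFamKrit} --- there is a string of $\gF$ starting at $a+1$ and one ending at $-1$ (Lemma~\ref{PolyLemEndSta}); covering constraints against the length-$(r+1)$ strings containing $[0,a]$ force them to be disjoint from $[0,a]$, hence of length $r-1$ by the part already proved; pushing with Lemma~\ref{PolyLemSkyv} (lengths cannot grow, by the same exact-length fact) then sweeps out every string of length $r-1$ disjoint from $[0,a]$. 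You mention this direct route only as a parenthetical alternative, again without executing it. So as written the proposal establishes the upper bound but leaves the heart of both assertions of the lemma unproved.
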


\begin{proof} 
a. Suppose $s$ is disjoint from $[0,a]$. If its length is $r$,
its complement has length $r+1$ and covers $[0,a]$. Thus it is also in $\gF$.
Impossible since the vertices of $X$ cannot be covered by two sets in $\gF$.
The length of $s$ cannot be $\geq r+1$ by a similar argument.
Suppose $s$ has length $\leq r-2$. 
So suppose it ends in $i-1$ distinct from $-1$. If $t$ begins in 
$i+1$ it must together with $s$ cover $V\backslash \{ i\}$ and thus have 
length $\geq r+2$. Impossible. Similarly it cannot start in a vertex distinct from
$a+1$. So it would have to start in $a+1$ and end in $-1$. But that
is not possible if the length is $\leq r-2$.
Thus $s$ is of length $r-1$.

Now there is a string starting in $a+1$. Since it cannot cover $V$ together
with a string containing $[0,a]$, its endpoint must be before $0$. 
Thus it has length $r-1$. The same holds also for strings ending in $-1$.

We now push the string starting in $a+1$ successively forward until we get
a string $t$ starting in $-r$. The endpoint of $t$ is not $\geq 0$
since it would then cover $V$ with the string $[0,r]$. Thus $t$ is in 
the complement of $[0,a]$ and has length $r-1$, and ends in $-2$. 

The proof of part b. is analogous.

\end{proof}

\begin{lemma} a. Suppose $\gF$ has strings of length $r+1$. 
Then $[1,a-1]$ is in $\gF$ and is the only string starting in $1$
or ending in $a-1$.

b. Suppose $\gF$ has no strings of length $r+1$. Then $[1,r]$ is in 
$\gF$, and the only other possible string in $\gF$ starting in $1$ is 
$[1,a-1]$.
Correspondingly $[a-r,a-1]$ is in $\gF$ and the only other possible
string ending in $a-1$ is $[1,a-1]$.
\end{lemma}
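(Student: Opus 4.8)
The plan is to reduce both parts to the structural lemmas already established for $n$ odd, and to argue by the same ``pushing'' technique (Lemma~\ref{PolyLemSkyv}) together with the coverage obstruction ``two strings in $\gF$ cannot cover $V$''. First consider part~a, so $\gF$ has strings of length $r+1$ and hence, by the previous lemma, all strings of length $r+1$ containing $[0,a]$ lie in $\gF$, all strings of length $r$ ``on one side'' (containing $0$ but not $a-1$, or $a$ but not $1$) lie in $\gF$, and all strings of length $r-1$ disjoint from $[0,a]$ lie in $\gF$. I want to identify all strings of $\gF$ starting at $1$ (the argument for ending at $a-1$ being symmetric). By Lemma~\ref{PolyLemEndSta} there is at least one such string $s$. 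I claim its endpoint lies in the open arc $(0,a)$: if $s$ ended at a vertex $\geq a$ (equivalently, outside $[0,a]$ on the far side), then since there is a string of length $r-1$ ending at $-1$ (from the previous lemma), or more directly since there is a string containing $0$ but not $1$ of length $r$ ending at $0$ — wait, I should instead pair $s$ with a string $t$ ending at $0$: such a $t$ has length $r$ by Lemma~\ref{PolyLemOddi}a, does not start in $[0,a]$ by Lemma~\ref{PolyLemCompl}, and if $s$ extends past $a$ then $s\cup t$ covers $V$, a contradiction. So $s$ ends inside $(0,a)$. Then Lemma~\ref{PolyLemOddi}a applied to the vertex $i$ just after the endpoint of $s$ forces $s$ to have length $r$; combined with the endpoint being in $(0,a)$ and the startpoint being $1$, this length-$r$ constraint pins the endpoint to $a-1$, i.e. $s=[1,a-1]$. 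Finally I must check $[1,a-1]$ actually is in $\gF$: its length is $a-1 < r$, and Lemma~\ref{PolyLemEndSta} guarantees \emph{some} string starts at $1$; since the only candidate is $[1,a-1]$, it must be $[1,a-1]$. Uniqueness then follows because the endpoint argument above applies to \emph{every} string starting at $1$. The same reasoning with roles of $0,1$ and $a,a-1$ swapped handles strings ending at $a-1$.

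For part~b, $\gF$ has no strings of length $r+1$, so by the previous two lemmata every string containing $[0,a]$ has length $r$ (and $a<r$), and every string disjoint from $[0,a]$ has length $r$. Again let $s$ start at $1$. If $s$ ends outside $(0,a)$ on the far side, then by Lemma~\ref{PolyLemOddi}b it has length $\leq r+1$; I can push $s$ forward (Lemma~\ref{PolyLemSkyv}) until it starts at $a+1$, obtaining a string disjoint from $[0,a]$ of length $\geq$ that of $s$, hence of length $r$, hence $[a+1,r]$ — but $[0,r]$ has length $r+1$, excluded, so instead I pair $[1,\text{end}]$ with a string ending at $0$: by the odd-case lemmas there is such a string of length $r$, namely $[-r+1,0]=[n-r+1,0]$ (length $r$), call it $t$; if $s$ reaches past $a$ then $s\cup t$ covers $V$, impossible. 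Wait — I must be careful: $[1,r]$ itself has length $r$ and its complement is a string of length $r+1$ containing $[0,a]$ only if $a\leq 1$ or $a\geq r$; since $a<r$, the complement of $[1,r]$ is $[r+1,0]$ which contains $a$ only if $a\leq 0$, false, so $[1,r]$ is disjoint-ish from... no, $[1,r]$ contains $a$ since $1\leq a<r$. So $[1,r]$ contains $[0,a]$? No: $[1,r]$ does not contain $0$. So $[1,r]$ contains $a$ but not $0$ — that is a ``one-sided'' length-$r$ string, and I claim such strings are forced into $\gF$ by pushing: start from the disjoint-from-$[0,a]$ string of length $r$ ending at $-1$ and push forward step by step; the length cannot increase (Lemma~\ref{PolyLemOddi}b gives $\leq r+1$, and a jump to $r+1$ would let it cover $V$ with a length-$r$ string containing $[0,a]$... but there are none of length $r+1$ so actually $\leq r$ throughout after the first), so one reaches $[1,r]$ with length $r$, showing $[1,r]\in\gF$. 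Then for \emph{any} string $s$ starting at $1$: if $s$ ends in $(0,a)$, Lemma~\ref{PolyLemOddi}a gives length $r$ — but a length-$r$ string starting at $1$ ending inside $(0,a)$ would have to be $[1,a-1]$ only if $a-1 = r$, contradicting $a<r$; so actually $s$ ending in $(0,a)$ forces... hmm, length $r$ starting at $1$ means ending at $r$, which is \emph{not} in $(0,a)$ since $a<r$ wait $a-1<r$ so $a\le r$ so $r$ could equal $a$? No, $a<r$ strictly, so $r\notin(0,a)$ and $r\ne a-1$ unless $a=r+1$, false. Contradiction, so $s$ does \emph{not} end in $(0,a)$. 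If $s$ ends outside on the far side with length $\ne r$, the coverage argument above kills it unless $s$ ends so that its length is exactly... this needs the careful bookkeeping I flagged. The upshot I expect: the only strings starting at $1$ are $[1,r]$ and possibly $[1,a-1]$, and symmetrically for ending at $a-1$.

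\textbf{Main obstacle.} The genuinely delicate point is part~b: unlike part~a, where the length-$r+1$ strings give a rigid ``spine'' over $[0,a]$ and Lemma~\ref{PolyLemOddi}a pins everything down cleanly, in part~b one has two \emph{different} allowed lengths available on the two sides ($r$ over $[0,a]$, $r$ disjoint from $[0,a]$) and the string $[1,r]$ sits awkwardly containing $a$ but not $0$; I will need to run the pushing argument with careful attention to exactly where the length is allowed to be $r$ versus forced to be $r-1$ or $r+1$, and to track the parity/arithmetic constraints relating $a$, $r$, and the string endpoints. The cleanest route is probably: (i) show any string starting at $1$ cannot end in the far arc $(a, 0)$ beyond a certain point, using coverage against a fixed length-$r$ string ending at $0$; (ii) show it cannot end strictly inside $(0,a)$ except at $a-1$, using Lemma~\ref{PolyLemOddi}a plus the arithmetic $a<r$; (iii) conclude the endpoint is $a-1$ or $r$; (iv) exhibit $[1,r]\in\gF$ by the push from $[a+1,-1]$. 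Verifying that the resulting $\gF_1$, $\gF_2$ indeed satisfy Proposition~\ref{MaxPropFamKrit} is then, as the author repeatedly notes, routine and I would relegate it to a remark rather than belabour it.
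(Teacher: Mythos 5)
Your part~a breaks at the step ruling out a string $s$ starting in $1$ and ending at a vertex $\geq a$. You pair $s$ with a string $t$ ending in $0$ (of length $r$, so $t=[-r+1,0]$) and claim $s\cup t$ covers $V$; but if $s=[1,j]$ with $a\leq j\leq r$ the union misses the nonempty arc $[j+1,r+1]$, whose restriction is a path and hence acyclic, so no contradiction with Proposition~\ref{MaxPropFamKrit} results. Your covering argument only bites when $j=r+1$, i.e.\ when $s$ has length $r+1$; it says nothing about $s=[1,a],\ldots,[1,r]$, which is exactly the range you need to exclude. The working argument is the one you started to write and then discarded: first bound the length of $s$ by $r$ (otherwise push it forward to a string starting in $2$ of length $\geq r+1$, contradicting Lemma~\ref{PolyLemOddi}), then take the length-$(r-1)$ string $t$ ending in $-1$ (available in case~a by the preceding lemma) and observe that the complement of $s\cup t$ is $\{0\}\cup[j+1,r+1]$, which is \emph{disconnected} because the neighbours $1$, $-1$ and the chord endpoint $a$ of the vertex $0$ all lie in $s\cup t$. (Your subsequent use of Lemma~\ref{PolyLemOddi} to pin the endpoint is also garbled as written --- $[1,a-1]$ does not have length $r$; the correct reductio is: an endpoint $j\in(0,a)$ with $j\neq a-1$ forces length $r$, hence endpoint $r\notin(0,a)$, contradiction.)

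Part~b you leave explicitly unfinished, and the one concrete new step you propose does not work: pushing the length-$r$ string disjoint from $[0,a]$ forward can never produce a string starting in $1$, because Lemma~\ref{PolyLemSkyv} only advances a starting point that is \emph{not} a chord endpoint, and the push gets stuck once the start reaches $0$. The paper's route is short: the string starting in $2$ has length $r$ by Lemma~\ref{PolyLemOddi} (so it is $[2,r+1]$), its endpoint $r+1$ is not a chord endpoint, so pushing the \emph{end} back one step gives a string ending in $r$ of length $\geq r$, hence exactly $r$ since case~b excludes length $r+1$; this is $[1,r]\in\gF$. The remaining classification of strings starting in $1$ in case~b is then the same two-line case analysis as in part~a (length $<r$ forces endpoint $a-1$ via Lemma~\ref{PolyLemOddi}, endpoints $\geq a$ are excluded by the disconnection argument with the length-$r$ string ending in $-1$), not the delicate bookkeeping you anticipate; note also that you silently assume $a<r$, whereas the earlier lemma allows the case $a=r$ with no string containing $[0,a]$, which your reduction does not address.
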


\begin{proof} a. Let $s$ start in $1$. Then it is of length $\leq r$ since
otherwise we could push it forward to a string starting in $2$ of length
$\geq r+1$. Impossible by Lemma \ref{PolyLemOddi}.
If it ends in a point $\geq a$, let $t$ be a string of length $r-1$ ending
in $-1$.
Then the complement of $s \cup t$ would be disconnected. Impossible.
Thus $s$ ends in a point in $(0,a)$. According to Lemma \ref{PolyLemOddi}
it must then end in $a-1$. The argument concerning the string ending in 
$a-1$ is analogous.

b. Let $s$ start in $1$. As above its length must be $\leq r$. Suppose the length
is $< r$. If it ends in $(0,a)$ its endpoint must be $a-1$ according to Lemma 
\ref{PolyLemOddi}.
That $s$ ends in a point $\geq a$ is as above impossible.

The string starting in $2$ has length $r$. Pushing it backward one step,
its length cannot increase. Thus we get a string of length $r$ starting
in $1$.
\end{proof}

\begin{proof}[Proof of Theorem \ref{PolyThmKorde} [Odd number of vertices]]
The lemmata above show that if $\gF$ has a string of length $r+1$, it must be the
family $\gF_2$. Also if $\gF$ does not contain strings of length $r+1$, it must 
contain all strings of length $r$ and the only possible extra string being $[1,a-1]$.
But adding this string gives a family fulfilling the criteria of Proposition \ref{MaxPropFamKrit}.
Hence if $\gF$ is maximal it must be $\gF_1$.
\end{proof}

We formulate the following conjecture concerning maximal CM monomial 
labellings of subdivisions of polygons.

\begin{conjecture} \label{PolyConjVar}
Given an $n$-gon with a subdivision of $k$ additional edges (but
no additional vertices). Then any maximal CM monomial labelling
has $n+k$ variables.
\end{conjecture}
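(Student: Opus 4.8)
The plan is to convert the conjecture into a statement about families of strings and then, following the pattern of the one-chord case, show such a family must have exactly $n+k$ members. Write $\gF$ for the family of subsets of $V$ attached to a maximal object of $\CM(X)$; by Lemma \ref{MaxLemXbet} we may take $\gF$ in $\CM_\dagger(X)$, and then, as already proved in Section~3 for an arbitrary subdivision of a polygon, every $S\in\gF$ is a boundary string $[i,j]$. Hence $|\gF|=\sum_{i\in V}c_i$, where $c_i$ is the number of strings of $\gF$ that start at $i$, and the goal becomes $\sum_i c_i=n+k$, i.e.\ $\sum_i(c_i-1)=k$. All of the ``string calculus'' of Section~3 survives for arbitrarily many chords: every vertex starts and ends some string (Lemma \ref{PolyLemEndSta}); for $i$ not a chord endpoint a string ending at $i-1$ and one starting at $i+1$ cover $V\setminus\{i\}$ (Lemma \ref{PolyLemCov}); and a string not anchored at a chord endpoint can be shifted one vertex without losing length (Lemma \ref{PolyLemSkyv}). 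These are the working tools.

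For the lower bound $|\gF|\ge n+k$ --- equivalently, that a reduced family in $\CM_\dagger(X)$ of size $<n+k$ admits a proper refinement --- one argues that $c_i\ge 1$ already gives $|\gF|\ge n$, and that each chord contributes one further string. Fixing a chord $\chi=\{u,v\}$, imitate the proof of Lemma \ref{PolyLemCompl}: push a longest string meeting $\chi$ until it ends at $v$, so it begins strictly before $u$; comparing this with the strings forced on the other side of $\chi$ isolates a string ``localized at $\chi$'' --- the analogue of the string $[1,a-1]$ that Theorems \ref{PolyThePoly}--\ref{PolyThmKorde} attach to the chord $\{0,a\}$ --- distinct from the $n$ per-vertex strings and distinct for distinct chords. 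This should yield $\sum_i(c_i-1)\ge k$.

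The upper bound $\sum_i(c_i-1)\le k$ is the crux, and the reason the conjecture is still open. Already for $k=1$ it consumed the long chain of Lemmas \ref{PolyLemCompl}--\ref{PolyLemOddi} (and the lemmas after them), which pin string lengths down through parity identities: $2L<n$ on a bare polygon, and lengths confined to $\{r-1,r\}$ or $\{r,r+1\}$ as one crosses a chord. With $k$ chords these become a coupled system of such local conditions --- morally one constraint per region of $X$ --- and there is no clean induction on $k$, since deleting a chord changes the parity of the ambient polygon and a maximal family for the smaller complex need not satisfy the criteria of Proposition \ref{MaxPropFamKrit} for $X$. I would attempt two routes. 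One is induction on $k$ by cutting $X$ along a chord $\chi$ into sub-polygons $X'$, $X''$: show that the strings of $\gF$ restrict, after adjoining the endpoints of $\chi$, to families in the corresponding $\CM_\dagger$, so that $|\gF|$ is controlled by $|\gF'|+|\gF''|$ minus an overlap term, and the genuine obstacle is bounding that overlap sharply from below. The other is a direct global count: organize the strings of $\gF$ by how they meet the $k+1$ two-cells of $X$ and by the triples in $\gF$ that cover $V$ (which, through the cellular-resolution picture and Proposition \ref{MaxPropT}, are governed by exactly those $k+1$ two-cells), and show that any string beyond the $n+k$ already exhibited produces an excess covering of $V$, contradicting either the minimality condition Proposition \ref{MaxPropFamKrit}.3 or the bound $\codim\le\dim X+1$ used in the proof of Proposition \ref{MaxPropT}.

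Once both inequalities are in hand one finishes as in Theorems \ref{PolyThePoly} and \ref{PolyThmKorde}: write down explicit extremal families generalizing $\gF_1$ and $\gF_2$, verify directly that they satisfy Proposition \ref{MaxPropFamKrit}, and conclude that maximal CM monomial labellings of $X$ with exactly $n+k$ variables exist and that every one has this size. I expect the coupled parity analysis behind the upper bound to be the main obstacle, and quite possibly to need a genuinely new combinatorial idea rather than a direct extension of the single-chord arguments.
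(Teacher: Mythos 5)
This statement is not a theorem of the paper but an open conjecture: the paper proves the cases $k=0$ (Theorem \ref{PolyThePoly}) and $k=1$ (Theorem \ref{PolyThmKorde}) and then explicitly formulates Conjecture \ref{PolyConjVar} for general $k$ without proof. Your write-up, read as a proof attempt, does not close it either. What you have correctly in place is the translation of the problem: maximal objects lie in $\CM_\dagger(X)$, their associated sets are strings, and the number of variables is the size of the family $\gF$, so the claim becomes $|\gF|=n+k$; and Lemmas \ref{PolyLemEndSta}, \ref{PolyLemCov}, \ref{PolyLemSkyv} do carry over verbatim to arbitrary subdivisions. But both halves of the count are missing. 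For the lower bound you only gesture (``imitate Lemma \ref{PolyLemCompl} \dots this should yield'') at producing, for each chord, an extra string distinct from the $n$ per-vertex strings and from the strings attached to other chords; nothing is proved about why these chord-localized strings exist in \emph{every} maximal family, nor why they are pairwise distinct, and note also that a family of size $<n+k$ failing maximality is not the same as ``admits a proper refinement'' --- dominance in the refinement order can come from adding sets as well as refining them, so even the reformulation of the lower bound needs care. For the upper bound you state yourself that it is ``the crux, and the reason the conjecture is still open,'' and neither of your two proposed routes (cutting along a chord, or a global count over the $k+1$ two-cells) is carried out; the induction-on-$k$ route in particular faces exactly the parity obstruction you identify, since the single-chord analysis hinges on string lengths being pinned to $\{r-1,r,r+1\}$ by the parity of $n$, and these constraints do not obviously decouple chord by chord. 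Finally, the concluding step --- writing down extremal families generalizing $\gF_1,\gF_2$ for general subdivisions and verifying Proposition \ref{MaxPropFamKrit} for them --- is also not done.

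In short, what you have is a reasonable research programme consistent with the paper's one-chord analysis, not a proof; since the paper itself offers no proof to compare against, the honest assessment is that the essential content (the sharp two-sided count of strings in a maximal family, especially the upper bound) remains entirely open in your attempt as it does in the paper.
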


\eks A cellular resolution of the monomials of degree two in three
variables is given by removing one of the interior edges in Figure 3.1.

\medskip
\trekantA
\medskip

We may polarise and get Figure 3.2.
However this is still not a maximal monomial labelling. Removing
the edge between $xy$ and $yz$ in the first diagram, there is
another way of ``polarising'' this, Figure 3.3. (The point here is that
the indices are always $1$ and $2$.)

\medskip
\trekantB
\medskip

Combining these two last diagrams we get a maximal monomial labelling
with eight variables in Figure 3.4.
Note that there are two possible maps from this monomial labelling
to the one in Figure 3.1, factoring through the second and
third figure respectively.

\section{Cohen-Macaulay monomial labellings of higher dimensional polytopes}

This section investigates CM cellular resolutions supported on
polytopes. We give constructions of CM monomial labellings
on some classes of selfdual polytopes where the class contains polytopes of 
arbitrary dimensions. These labellings are shown to be maximal.
We also conjecture that any polytope supporting a CM cellular resolution
must be selfdual.

In the end we consider an example of a three dimensional selfdual polytope
and construct a maximal CM monomial labelling of it. Along the way
we give several examples of how the labelled polytopes give cellular
resolutions of Stanley-Reisner rings of simplicial polytopes.

In this section our cell complex $X$ will be a convex polytope.

\subsection{Necessary conditions on CM monomial labellings}

\begin{lemma} Let $P$ be a polytope supporting a CM cellular
resolution. Then its $f$-vector is symmetric, i.e. if $f_i$ is the number
of $i$-dimensional cells, then $f_i = f_{\dim P-1-i}$.
\end{lemma}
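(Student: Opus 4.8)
The plan is to deduce from the hypotheses that $S/I$ is in fact Gorenstein, and then to read off the symmetry of the $f$-vector from the self-duality of the minimal free resolution of a Gorenstein quotient ring.

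First I would fix notation. Write $d = \dim P$ and $c = \codim(S/I)$, and let $F_\bullet : 0 \to F_{d+1} \to \cdots \to F_1 \to F_0 \to S/I \to 0$ be the cellular chain complex of $P$ over $S = \kr[x_1,\dots,x_n]$; here $F_0 = S$ and $F_{i+1}$ is free on the $i$-dimensional faces of $P$, so $\rank F_{i+1} = f_i$ (and $\rank F_0 = 1 = f_{-1}$ under the usual convention for the empty face). By hypothesis $F_\bullet$ is the \emph{minimal} free resolution of $S/I$, hence $\beta_{i+1}(S/I) = f_i$. Since $P$ is a polytope it has a single face of top dimension, namely $P$ itself, so $F_{d+1}$ has rank $f_d = 1$ and $\beta_{d+1}(S/I) = 1$; in particular $\mathrm{pd}(S/I) = d+1$, so by Auslander--Buchsbaum together with Cohen--Macaulayness we get $c = \mathrm{pd}(S/I) = d+1$.

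Next I would use that $S/I$ is Cohen--Macaulay of codimension $c$ with last Betti number $\beta_c(S/I) = 1$. The number $\beta_c(S/I)$ is the Cohen--Macaulay type of $S/I$ --- equivalently, the minimal number of generators of the canonical module $\omega_{S/I} = \Ext^c_S(S/I, S)$ up to a degree twist --- and a Cohen--Macaulay ring of type $1$ is Gorenstein. (Alternatively, pass to an Artinian reduction by a linear system of parameters and note that its socle, whose dimension is again $\beta_c$, is one-dimensional.) Thus $S/I$ is Gorenstein.

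Finally I would invoke self-duality. Applying $\Hom_S(-,S)$ to $F_\bullet$: since $S/I$ has grade $c$ and projective dimension $c$, we have $\Ext^i_S(S/I,S) = 0$ for $i \ne c$, so $\Hom_S(F_\bullet, S)$, with its homological grading reversed, is a free resolution of $\omega_{S/I}$, and it is minimal because dualizing leaves all differential matrices with entries in the maximal graded ideal. Hence $\beta_i(\omega_{S/I}) = \beta_{c-i}(S/I)$. On the other hand, Gorensteinness gives $\omega_{S/I} \cong (S/I)(a)$ for some degree shift $a$, so $\beta_i(\omega_{S/I}) = \beta_i(S/I)$. Combining, $\beta_i(S/I) = \beta_{c-i}(S/I)$ for all $i$. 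Translating via $\beta_{i+1}(S/I) = f_i$ and $c = d+1$ gives $f_i = f_{d-1-i}$, which is the assertion (the extreme case $f_d = f_{-1} = 1$ corresponds to $\beta_0 = \beta_c = 1$). The argument is essentially formal once Gorensteinness is established; the only place the combinatorics of $P$ enters is the observation $f_d = 1$, and the only mildly delicate point is the standard fact that the $S$-dual of a minimal free resolution of a Cohen--Macaulay module is again a minimal free resolution, now of the canonical module.
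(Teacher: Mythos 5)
Your proof is correct and follows essentially the same route as the paper's: the unique top-dimensional cell forces the last Betti number (the Cohen--Macaulay type) to be $1$, type $1$ together with Cohen--Macaulayness gives Gorenstein, and self-duality of the minimal free resolution then yields $f_i = f_{\dim P - 1 - i}$. The only cosmetic difference is that the paper first polarises to reduce to the square-free (Stanley--Reisner) case before invoking Gorensteinness, whereas you apply the general fact for graded CM quotients directly, which works just as well.
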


\begin{proof} By polarising we may assume the monomial labelling of $P$ is square free.
Since $P$ is a polytope, the corresponding cellular resolution has
type $1$. But a simplicial complex which is Cohen-Macaulay of type $1$
is Gorenstein. Hence the resolution is self-dual and so 
$f_i = f_{\dim P-1-i}$.
\end{proof}

\begin{conjecture} If $P$ is a polytope supporting a CM cellular 
resolution, it is a selfdual polytope.
\end{conjecture}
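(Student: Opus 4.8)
The plan is to promote the self-duality of the cellular resolution --- already established in the proof of the preceding lemma, via the fact that a Cohen--Macaulay simplicial complex of type $1$ is Gorenstein --- to an anti-automorphism of the face lattice $L(P)$ of $P$. This suffices: the polar dual $P^{*}$ has $L(P^{*}) = L(P)^{\mathrm{op}}$, so $P$ is combinatorially self-dual precisely when $L(P)$ is anti-isomorphic to itself.

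First I would polarise, so that $X = P$ supports a minimal cellular resolution $F_{\bullet}$ of a square-free (hence Stanley--Reisner) ideal $I$ with $S/I$ Gorenstein; write $d = \dim P$, so $\mathrm{pd}_{S}(S/I) = d+1$. The cells of $P$ index the free basis of $F_{\bullet}$: an $(i-1)$-cell $\sigma$ gives a basis element of $F_{i}$ in the multidegree $\deg\sigma$ (the least common multiple of the labels of its vertices), with $F_{0} = S$ corresponding to the empty cell and the unique top cell $P$ contributing to $F_{d+1}$ in some multidegree $\aa$. Gorenstein self-duality yields multidegree-preserving isomorphisms $F_{j} \cong \Hom_{S}(F_{d+1-j}, S)(-\aa)$ compatible with the differentials, hence a bijection $\sigma \mapsto \sigma^{\vee}$ from $(i-1)$-cells to $(d-i)$-cells with $\deg\sigma + \deg\sigma^{\vee} = \aa$. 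This refines the $f$-vector symmetry already recorded, and pairs the empty face with $P$.

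Next I would show that $\sigma \mapsto \sigma^{\vee}$ reverses inclusion. In the cellular differential, a pair of cells $\sigma \subsetneq \tau$ with $\dim\tau = \dim\sigma + 1$ contributes the entry $\pm\,\xx^{\deg\tau - \deg\sigma}$, which is nonzero precisely for such covering pairs; since the self-duality isomorphism intertwines $\partial$ with its transpose, the $(\sigma,\tau)$-entry is matched, up to a unit, with the $(\tau^{\vee},\sigma^{\vee})$-entry of the differential. Thus $\sigma \lessdot \tau$ in $L(P)$ if and only if $\tau^{\vee} \lessdot \sigma^{\vee}$. Since $L(P)$ is graded with rank equal to dimension plus one, and this correspondence and its inverse both reverse rank and send covers to reversed covers, a routine saturated-chain argument shows it reverses every order relation; hence $L(P)$ is self-anti-isomorphic and $P$ is self-dual.

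The genuine obstacle --- and the reason the statement is only conjectured --- is the passage from algebraic self-duality to an honest combinatorial anti-automorphism. The terms of a minimal resolution are canonical, but its differentials, and the self-duality isomorphism, are determined only up to multigraded change of basis; the assignment $\sigma \mapsto \sigma^{\vee}$ is pinned down only when the cells of each fixed dimension carry pairwise distinct multidegrees. When several cells share a multidegree the self-duality matrix may mix them, and one must then prove that \emph{some} choice of self-duality is ``diagonal'' and respects incidences --- equivalently, that the face lattice of $P$ can be recovered from the labelled cellular chain complex. Making this precise in general appears to need the rigidity of polytopal face lattices (the diamond property, connectivity of intervals and links) beyond what the Mayer--Vietoris and Gorenstein-duality arguments supply. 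The argument above does, however, settle the conjecture whenever all cells of a given dimension carry distinct monomial labels.
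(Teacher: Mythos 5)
The statement you were asked about is stated in the paper only as a conjecture: the author offers no proof, and the sole supporting evidence in the text is the preceding lemma that the $f$-vector of $P$ must be symmetric (via polarisation, type $1$, and Gorensteinness). So there is no ``paper proof'' to match, and your proposal does not close the gap either --- as you yourself say in your final paragraph. Your outline is essentially a sharpened version of the paper's evidence: Gorenstein duality gives not just $f_i = f_{\dim P -1-i}$ but a matching of multigraded Betti numbers, hence a bijection between $(i-1)$-cells and $(d-i)$-cells pairing multidegrees to the top multidegree $\aa$. That part is sound. The genuine gap is exactly where you locate it: the self-duality isomorphism $F_j \cong \Hom_S(F_{d+1-j},S)(-\aa)$ is only defined up to multigraded automorphisms of the free modules, so the induced bijection on cells is not canonical, and the step ``$\sigma\subsetneq\tau$ a covering pair iff $\tau^{\vee}\subsetneq\sigma^{\vee}$ is one'' reads off the zero-pattern of the differential, which is not an invariant of the resolution. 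Without that, no incidence-reversing map of face lattices is produced, and the conjecture is untouched.

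One further caution about your closing claim that distinct labels in each dimension would rescue the argument: distinctness is not enough. A multigraded automorphism of $F_i=\oplus_k S(-\bb_k)$ may have a nonzero off-diagonal component whenever $\bb_k$ divides $\bb_l$, even with $\bb_k\neq\bb_l$; such unipotent changes of basis can alter which entries of the differential vanish. So to make even the special case airtight you would need the labels within each homological degree to be pairwise incomparable under divisibility (or an argument that the covering-pair entries survive any allowed change of basis), not merely pairwise distinct. This is worth flagging because in labelled polytopes comparable labels in the same dimension do occur. In short: your diagnosis of the obstruction is the right one, it is the reason the paper leaves the statement as a conjecture, but the proposal as written proves neither the general statement nor, quite, the special case it claims.
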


\eks 
A three-dimensional polytope which is a bipyramid cannot support
a CM cellular resolution.
If the base is an $n$-gon, where $n \geq 3$, it has
$n+2$ vertices and $2n$ faces. But then $2n \neq n+2$.
\eksfin

\subsection{CM monomial labellings of pyramids}

Given a polytope $X$, let $PX$ be the pyramid over $X$, i.e. the convex
hull of $X$ and a point $t$ outside the linear space where $X$ lives.
The polytope $X$ is selfdual iff $PX$ is.

\begin{theorem} Given a CM monomial labelling $(\aa_1, \ldots, \aa_v)$
of $X$. Let $y$ be a variable.

a. Then $(\aa_1, \ldots, \aa_v, y)$ is a CM monomial labelling of $PX$.

b. The labelling of $X$ is maximal iff the associated labelling of 
$PX$ is maximal.

c. Every maximal CM monomial labelling of $PX$ is of this form.

\end{theorem}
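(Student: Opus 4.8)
The statement bundles three claims about the pyramid $PX$ over $X$, and I would prove them in order a., b., c., using the combinatorial criteria of Proposition~\ref{MaxPropFamKrit} and Lemma~\ref{MaxLemXbet} as the main tools. Throughout, I would polarise at the start and work entirely with the family $\gF$ of subsets of $[v]$ associated to the CM labelling of $X$, together with a new vertex, the apex $t$, labelled by a new variable $y$ whose associated subset is the singleton $\{t\}$. The faces of $PX$ are exactly the faces of $X$ and the cones $PF$ over faces $F$ of $X$ (including $Pt = \{t\}$ itself and all of $PX$), so topological and covering statements about $PX$ reduce cleanly to statements about $X$.

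For part a., I would verify the three conditions of Proposition~\ref{MaxPropFamKrit} for the family $\gF \cup \{\{t\}\}$ of subsets of $[v+1]$, noting $\dim PX = \dim X + 1$. Condition~1 (no $\dim PX$ of the subsets cover $[v+1]$): a cover of $[v+1]$ must include the set $\{t\}$ since that is the only one containing $t$, so the remaining $\dim X$ sets would cover $[v]$, contradicting condition~1 for $\gF$. Condition~2 (acyclicity of restrictions to complements of unions of subsets): a union $W$ of subsets of $\gF \cup \{\{t\}\}$ is either a union $W'$ of subsets of $\gF$, in which case $PX$ restricted to the complement is the pyramid over $X$ restricted to the complement of $W'$, hence contractible (a cone) unless $W' = \emptyset$, and when $W' = \emptyset$ it is all of $PX$, also acyclic; or $W = W' \cup \{t\}$, in which case $PX$ restricted to the complement is just $X$ restricted to the complement of $W'$, acyclic by condition~2 for $\gF$. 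Condition~3 (for each $F \subsetneq G$ among faces, some $S \in \gF'$ meets $G$ but not $F$): if both $F, G$ are faces of $X$ this is condition~3 for $\gF$; if $G = PG'$ is a cone and $F$ a face of $X$, then $t \in G \setminus F$ so $\{t\}$ works; if $F = PF'$ and $G = PG'$ with $F' \subsetneq G'$, then $t \in F \cap G$ so we need $S$ meeting $G'$ not $F'$, which is condition~3 for $\gF$ applied to $F' \subsetneq G'$; the case $F = \{t\} \subsetneq G = PG'$ is handled by any $S$ meeting $G'$, which exists since the elements of $\gF$ cover $[v]$.

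For part b., the forward direction follows from the uniqueness argument already implicit in the theory: given a CM labelling $L$ of $X$, coning with a fresh variable gives a CM labelling $PL$ of $PX$; I would show that a splitting for a morphism into the $X$-labelling extends to a splitting for the corresponding morphism into the $PX$-labelling, and conversely, essentially because the apex variable is a free variable not interacting with anything in $\gF$ — formally, morphisms in $\CM_*(PX)$ respect the singleton $\{t\}$ and restrict to morphisms in $\CM_*(X)$, and this restriction is compatible with the refinement order by Proposition~\ref{MaxProMax}.b. So maximality of $\gF$ among reduced families for $X$ is equivalent to maximality of $\gF \cup \{\{t\}\}$ among those for $PX$. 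The content of part c. is that \emph{every} maximal labelling of $PX$ arises this way. Here the key step, and the main obstacle, is to show that in a maximal family $\gG$ on $PX$ the apex $t$ lies in a unique element of $\gG$, and that element is the singleton $\{t\}$. That $t$ lies in some $S \in \gG$ with $X_{|S}$ connected (Lemma~\ref{MaxLemXbet}) but $PX_{|S}$ connected is automatic, so I cannot use connectivity directly on $S$; instead I would apply Proposition~\ref{MaxPropT} with $T = S \ni t$: there are $S_1, \ldots, S_{\dim PX}$ in $\gG$ not containing $t$ whose union with $S$ covers $PX$. Since none of the $S_i$ contains $t$, the $S_i$ cover $PX \setminus \{t\}$, which deformation retracts onto $X$; combined with condition~2 (the complement of $S_1 \cup \cdots \cup S_{\dim PX}$ must be acyclic and contains $t$) one forces that complement to be exactly $\{t\}$ — using that $\dim X$ sets cannot cover $X$. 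Then $S \setminus \{t\}$ together with the $S_i$ restricted to $X$ would have to cover $X$ with $\dim X + 1$ sets, and a careful count using Proposition~\ref{MaxPropFamKrit}.1 for $X$ shows $S \setminus \{t\}$ must be empty, i.e. $S = \{t\}$. Once $\{t\} \in \gG$ and it is the only element containing $t$, the family $\gG \setminus \{\{t\}\}$, viewed on $[v]$, satisfies the criteria of Proposition~\ref{MaxPropFamKrit} for $X$ (each condition restricts as in part a. run backwards), and is reduced, hence is a CM labelling of $X$; its maximality follows from that of $\gG$ via part b. This identifies $\gG$ as coming from a maximal labelling of $X$, completing c.
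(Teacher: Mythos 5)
Your parts a.\ and b.\ are essentially fine: a.\ verifies the criteria of Proposition \ref{MaxPropFamKrit} directly for $\gF\cup\{\{t\}\}$ (the paper instead uses a short mapping-cone argument, which avoids your polarisation detour), and b.\ is the same ``intersect a would-be new set with $V$'' argument the paper gives, just stated more loosely. The genuine gap is in part c., at precisely the step you single out as the key one. After applying Proposition \ref{MaxPropT} to $T=S\ni t$, you claim that acyclicity of the complement of $S_1\cup\cdots\cup S_{\dim PX}$ forces that complement to be exactly $\{t\}$. It does not: the complement is $\{t\}\cup A$ with $A\sus S\cap V$, and $PX$ restricted to $\{t\}\cup A$ is the cone with apex $t$ over $X_{|A}$, hence contractible for \emph{every} $A$; acyclicity carries no information at the apex (the same reason Lemma \ref{MaxLemXbet} is vacuous there, as you yourself note). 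The follow-up ``careful count'' is also circular and ineffective: Proposition \ref{MaxPropFamKrit}.1 ``for $X$'' is only available once you know the restriction of $\gG$ to $V$ is a CM labelling of $X$, which is what part c.\ is trying to establish; and a cover of $V$ by $\dim X+1$ sets of $\gG$ avoiding $t$ contradicts nothing, since condition 1 for $PX$ only forbids covers of $V\cup\{t\}$ --- indeed in the coned labelling itself $\dim X+1$ members of $\gF$ do cover $V$.

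The missing idea is a dimension-drop argument. The paper shows that if $S\cup\{t\}\in\gG$ with $S$ nonempty, one sets $W=V\setminus S$ and $Y=X_{|W}$, a proper subcomplex of $X$ and hence of dimension $<\dim X$ (the top cell of $X$ uses all of $V$). The family $\{T\cap W:\,T\in\gG\}$ supports a cellular resolution on $Y$: the complement in $W$ of any union of these sets equals the complement in $V\cup\{t\}$ of a union of elements of $\gG$ once $S\cup\{t\}$ is thrown in, hence is acyclic by condition 2 for $\gG$. Therefore the ideal generated by the labels on $W$ has projective dimension at most $\dim Y+1\le \dim X$, hence codimension at most $\dim X$, and so $W$ is covered by at most $\dim X$ of the sets $T\cap W$. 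Adding $S\cup\{t\}$ then covers all vertices of $PX$ with at most $\dim X+1=\dim PX$ members of $\gG$, contradicting condition 1 for $PX$. This Auslander--Buchsbaum-type bound coming from the lower-dimensional supporting complex $Y$ is what your sketch lacks; once it is in place, your concluding step (remove $\{t\}$, check that the criteria restrict to $X$, and invoke part b.\ for maximality) goes through.
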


\begin{proof}
a. If $C$ is a cellular resolution over $S$ of the quotient ring
$S/(a_1, \ldots,a_v)$,
then the cone of $C\te_k k[y](-1) \mto{\cdot y} C \te_k k[y]$
is a resolution of the quotient ring of $S \te_k k[y]$
given by the labelling of $PX$.

\medskip
b. Let the monomial labelling $(\aa_1, \ldots, \aa_v)$ correspond
to a family $\gF$ of subsets of $V$. We need to show that $\gF$ is maximal
iff $\gF \cup \{ \{ t\} \}$ is maximal.

If $\gF$ is maximal, then if we could add a subset $\tilde{T}$ of 
$V \cup \{  t \}$ to $\gF \cup \{ \{ t \} \}$ and still have the 
conditions of Proposition \ref{MaxPropFamKrit} holding, 
then we could add $\tilde{T} \cap V$ to 
$\gF$, Proposition \ref{MaxPropFamKrit} still holding.
Since $\gF$ is maximal, $\tilde{T} \cap V$ would have to be a disjoint union
of sets in $\gF$. But then the same would hold for $\tilde {T}$.
Therefore $\gF \cup \{ \{ t \} \}$ is maximal.

Conversely, if $\gF \cup \{ \{ t\} \} $ is maximal, then if we add
a subset $T$ to $\gF$ and still have the criteria of Proposition \ref{MaxPropFamKrit}
holding, then we could add this to $\gF \cup \{ \{ t\} \} $ with the criteria still
valid. Hence $T$ is a disjoint union of sets in $\gF \cup \{ \{ t \} \}$ and so it must
be a disjoint union of sets in  $\gF$, and so $\gF$ is maximal.


\medskip
c. Suppose the maximal family $\gF$ on $PX$ contains $S \cup \{t\}$ where $S$ is nonempty.
Let $W = V \backslash S$ and $Y = X_{| W}$. 
Let $\gF^\prime = \{ T \cap W\, |\, T \in \gF \}$. Then the family $\gF^\prime$ has the
property that the complement of any union of elements in $\gF^\prime$ gives
an acyclic restriction.
Hence the family $\gF^\prime$ determines a cellular resolution of the ideal generated
by $\{ \aa_i \}_{i \in W}$. Since $\dim Y < \dim X$, the quotient ring by this ideal 
will have codimension $\leq \dim Y + 1 \leq \dim X$. Thus $Y$ can be covered by
$\leq \dim X$ elements in $\gF^\prime$. But then $PX$ can be covered by $S \cup \{ t \}$
together with these, a total of $\leq \dim X + 1$ elements of $\gF$. But this is impossible
for a CM labelling.
\end{proof}


\eks A three-dimensional bipyramid is a simplicial polytope.
In order for its Stanley-Reisner ideal to 
have a cellular resolution supported on a three
dimensional polytope, the base of the bipyramid must be a pentagon
(the number of vertices must be four more than the dimension).
In this case a minimal cellular resolution is given by the pyramid
over the pentagon labelled as follows. (A label $ij$ is short for $x_ix_j$.)

\medskip
\pentpyramid
\medskip
\eksfin

\subsection{CM monomial labellings of elongated pyramids}
The elongated pyramid $EPX$ over $X$ is the union of $X \times [0,1]$
and the pyramid over $X$, with the base of the pyramid identified
with $X \times {1}$. The polytope $EPX$ is selfdual iff $X$ is.
If $V$ is the vertex set of $X$ let $EPV$ be the vertex set of 
$EPX$. It consists of $V \times \{0,1\}$ and the vertex $t$ of the 
pyramid. Given a family $\gF$ of subsets of $V$, we get a family $EP\gF$
of subsets of $EPV$ consisting of (letting $S$ vary over $\gF$)
i) $S \times \{1\} \cup \{ t \}$, ii) $S \times \{ 0,1 \}$, 
iii) $V \times \{ 0 \}$.

\begin{theorem} Let $\gF$ be in $\CM_*(X)$. 

a. The family $EP\gF$ is in $\CM_*(EPX)$.

b. If $\gF$ is maximal, then $EP\gF$ is maximal.
\end{theorem}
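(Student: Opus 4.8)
The plan is to verify that $EP\gF$ satisfies the three criteria of Proposition \ref{MaxPropFamKrit} for the cell complex $EPX$ (which has dimension $\dim X + 1$), and then, for part b, to show that no subset can be added to $EP\gF$ without some union of its elements acquiring a non-acyclic restriction or violating minimality. First I would fix notation: write $d = \dim X$, so $\dim EPX = d+1$, and observe that $EPX$ deformation retracts onto $X$, with the ``top'' region $X \times [1/2,1] \cup (\text{pyramid})$ contractible and gluing onto $X \times \{1\}$. The key topological fact I would isolate at the outset is: for $W \subseteq V$, the restriction $EPX$ to the complement of a union of sets of types (i), (ii), (iii) is homotopy equivalent to a suitable restriction of $X$ — more precisely, removing $V \times \{0\}$ collapses $EPX$ onto the pyramid $PX$, while keeping $V\times\{0\}$ but removing some type-(ii) sets $S\times\{0,1\}$ restricts to $EP(X_{|W})$ for the corresponding $W \subseteq V$. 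So acyclicity of restrictions of $EP\gF$ reduces to acyclicity of restrictions of $\gF$ (known, since $\gF \in \CM_*(X)$) together with contractibility of cones.

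For criterion 2 (condition on unions $W$ being acyclic), I would case-split on whether $V \times \{0\}$ is among the chosen sets. If it is, the complement lies in the top contractible region and the restriction is either contractible or (after also removing type-(i)/(ii) sets) homotopy equivalent to a restriction of $X$ by a union of sets of $\gF$ together with a cone point $t$ — and coning preserves acyclicity. If $V\times\{0\}$ is not chosen, then the complement contains all of $V\times\{0\}$, and I would use a Mayer–Vietoris / deformation argument: the complement is $EP(Y)$ for $Y = X$ restricted to the complement in $V$ of the relevant union, which is acyclic by hypothesis on $\gF$, and $EP$ of an acyclic complex is acyclic since it is built from a product with an interval and a cone. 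For criterion 3 (minimality, i.e. for each face pair $F \subsetneq G$ of $EPX$ there is $S' \in EP\gF$ meeting $G$ but not $F$), I would run through the face structure of $EPX$: faces are either faces of $X\times[0,1]$ (products of faces of $X$ with $\{0\}$, $\{1\}$, or $[0,1]$) or faces of the pyramid $PX$ (faces of $X\times\{1\}$ coned with $t$). In each case the extra coordinate $\{0\}$, $\{1\}$, or $t$ lets one distinguish $F$ from $G$ using either a type-(iii) set, a type-(i) set containing $t$, or — when $F,G$ have the same "height structure" — a type-(ii) set $S\times\{0,1\}$ coming from an $S\in\gF$ separating the underlying faces of $X$ via criterion 3 for $\gF$. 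Criterion 1 (no $d+1$ sets of $EP\gF$ cover $EPV$) follows because covering $EPV$ forces covering $V\times\{0\}$, which needs either the single set $V\times\{0\}$ or $\geq$ (codim of $\gF$'s ideal) many type-(ii) sets, plus covering $t$ needs a type-(i) set, plus covering $V\times\{1\}$ needs type-(i) or type-(ii) sets projecting to a cover of $V$; a short count against criterion 1 for $\gF$ gives the bound.

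For part b, suppose $\gF$ is maximal but $EP\gF$ is not, so there is a subset $\tilde T \subseteq EPV$ whose addition keeps Proposition \ref{MaxPropFamKrit} valid and which is not a disjoint union of elements of $EP\gF$. I would project: let $T = \{\, v \in V : (v,0) \in \tilde T \text{ or } (v,1)\in \tilde T \,\}$, and analyze how $\tilde T$ meets the three "levels" $V\times\{0\}$, $V\times\{1\}$, $\{t\}$. The strategy is to show $T$ (or a piece of it) could be added to $\gF$ without breaking its criteria, contradicting maximality of $\gF$ — mirroring the argument in part c of the pyramid theorem and in Lemma \ref{MaxLemXbet}. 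The main obstacle I expect is precisely this step: controlling the "mixed" subsets $\tilde T$ that are not of the clean forms (i)–(iii) — in particular ruling out sets that straddle levels $0$ and $1$ asymmetrically — since the acyclicity criterion for such $\tilde T$ need not transparently correspond to a criterion on $V$. I would handle this by using criterion 2 applied to $\tilde T$ together with the type-(iii) set $V\times\{0\}$ and with various type-(ii) sets, extracting from the resulting acyclic restrictions (via the Mayer–Vietoris claim already used in Lemma \ref{MaxLemXbet}) enough information to force $\tilde T$ into one of the standard forms up to disjoint unions of existing elements, and then concluding that its projection must already be a disjoint union of sets of $\gF$.
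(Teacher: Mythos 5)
Part a of your proposal is essentially the paper's argument: you check conditions 1--3 of Proposition \ref{MaxPropFamKrit} for $EP\gF$, splitting condition 2 according to whether $V\times\{0\}$ is among the chosen sets, treating condition 3 by running through the face types of $EPX$, and condition 1 by a covering count. That part is fine.

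For part b there is a genuine gap. Maximality of $\gF$ (via Proposition \ref{MaxProMax}) only says that any subset of $V$ which can be adjoined to $\gF$ while keeping Proposition \ref{MaxPropFamKrit} valid must be a disjoint union of sets of $\gF$; so your projection step, even if carried out, yields at best that $T^0=\tilde T\cap(V\times\{0\})$ and $T^1=\tilde T\cap(V\times\{1\})$ correspond to disjoint unions of sets of $\gF$ (there is no ``contradiction with maximality of $\gF$'' to be had). That is far from the required conclusion that $\tilde T$ itself is a disjoint union of sets of $EP\gF$: one must still exclude $t\in\tilde T$, $T^0\subsetneq T^1$, $T^1\subsetneq T^0$, $T^0=\emptyset$ or $T^0=V$ with $T^1$ arbitrary, and the case $T=T^0\times\{0,1\}\cup\{t\}$ --- precisely the ``mixed'' configurations you flag as the main obstacle. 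The paper eliminates these not by Mayer--Vietoris manipulations on $\tilde T$ but by covering arguments: it uses Proposition \ref{MaxPropT} (for $t\in T\in\gF$ one can cover $V$ by $T$ together with $\dim X$ sets of $\gF$ none containing the prescribed vertex) and proves the additional Lemma \ref{TogS} (two disjoint sets of a maximal family extend by $\dim X-1$ further sets to a cover of $V$; its proof needs Alexander duality and a connectivity analysis around an edge joining the two sets). Each bad configuration then either produces a cover of $EPV$ by $\dim X+1$ sets of $EP\gF\cup\{\tilde T\}$, contradicting condition 1, or a complement that is disconnected or a sphere, contradicting condition 2. Your proposed remedy (``apply criterion 2 with $V\times\{0\}$ and various type (ii) sets and extract enough information'') names no mechanism that could replace these two covering results, and without them the asymmetric cases are simply not ruled out; as written, part b is a statement of intent rather than a proof.
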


\begin{proof}
a. The family $\gF$ fulfils the conditions of Proposition \ref{MaxPropFamKrit}. We must show
that $EP\gF$ fulfils the same conditions.

\medskip 

\noindent Conditon 3. Let $F \sus G$ be two distinct faces of $EPX$. Suppose $G$ is 
contained
in the pyramid over $X \times \{1\}$ with vetex $t$. If $G$ is contained in the base, 
condition
3. is clear. If $G$ is the pyramid over a face of $X$, then if $F$ is strictly contained
in this base face, condition 3. is clear. If $F$ is the base face, there is a set $S$
in $\gF$ disjoint from $F$ (by considering the inclusion of faces $F \sus X$), and then
condition 3. holds by considering $S \cup {t}$. 
If $G$ is contained in $X \times \{0\}$, condition 3. clearly also holds. Suppose now
that $G$ is $A \times [0,1]$ where $A$ is a face of $X$. If $F$ is contained in
$B \times [0,1]$ where $B$ is a face strictly in $A$, condition 3. is clear. Otherwise
$F$ is either $A \times \{0\}$ or $A \times \{1\}$ in which case condition 3. is also clear.


\medskip

Condition 1. Let $\tilde{S}_1 \cup \cdots \cup \tilde{S}_m$ be a covering of $EPV$
by sets of $EP\gF$. If none of these are $V\times \{ 0 \}$, then since
they cover $V \times \{ 0 \}$, a selection of, say $r$ of  these must be of the
form $S \times \{0,1 \}$ where these $S$'s cover $V$. But then
$r \geq \dim X + 1$. Since something also must contain $t$, we
get $m \geq \dim X + 2$.

If one of the above is $V \times \{0\}$, the rest will cover $V \times \{1\}$.
We need at least $\dim X + 1$ such and so $m \geq \dim X + 2$.

\medskip

Condition 2. Let $\tilde{S}_1 \cup \cdots \cup \tilde{S}_m$ be a union of subsets of $EP\gF$.
We will show that $X$ restricted to its complement is acyclic.
Suppose first that $V \times \{ 0 \}$ is one of the sets in the union.
If none of the $\tilde{S}_i$ contains $t$, the complement is contractible.
If some $\tilde{S}_i$ contains $t$, the acyclicity of the complement
follows by the fact that this is true for $X$.

Suppose then that $V \times \{0 \}$ is not one of the sets in the union.
Then the complement will consist of the union of $A \times \{1 \}$ and
$B \times \{ 0 \}$, where $A \sus B$, and possibly $t$, 
where $X$ restricted to $A$ or $B$ are both acyclic. 
Then  $EPX$ restricted to this complement is also acyclic.

\medskip

b. We now want to show that if $\gF$ is a maximal family,
then so is $EP\gF$. Let $T \sus EPV$ be a set of vertices. Suppose 
$EP\gF \cup \{T\}$ fulfils the criteria of Proposition \ref{MaxPropFamKrit}.

Let $T^0 \times \{ 0 \} = T \cap (V \times \{ 0 \} )$. Suppose 
$T^0$ is not in $\gF$. The reason is either that i) it is empty or
ii) $X$ restricted to
the complement $C$ of the union of $T^0$ and various other elements of 
$\gF$ is not acyclic, or iii) $T^0$ covers $V$ together with $\leq \dim X - 1$
elements in $\gF$, or iv) it is a disjoint union of sets in $\gF$.

In case ii) $C \times \{ 0 \}$ will also be a complement of a union of
elements of $EP\gF$. Impossible. 
In case iii), let $T^0 \cup S_1 \cup \cdots \cup S_r $ cover $V$ where 
$r \leq \dim X - 1$.
Now $T$, the subsets $S_i \times \{0, 1 \}$, and $S_1 \times \{1\} \cup \{ t \}$ 
comprise a total of $\dim X + 1$ sets. That is one short of possibly covering $EPV$.
If $V \backslash T^0$
is not empty, the complement of $\cup_{i=1}^r (S_i \times \{ 1 \}
\cup \{t\}) \cup T$ will be disconnected:
Its restriction to level 0 and level 1 are non-empty and disjoint, since $T^0$ and the 
$S_i$ cover $V$.
Impossible.
So we can conclude that either $T^0$ is a disjoint union of sets in  $\gF$ or it is empty, or it is $V$.

\medskip

Now let $T^1 \times \{ 1 \} = T \cap (V \times \{ 1 \})$. Suppose $T^1$ 
is not in $\gF$. Then the reason is either that i) it is empty or ii)
$X$ restricted to
the complement $C^\prime$ of the union of $T^1$ and various other elements of 
$\gF$ is not acyclic, or iii) $T^1$ covers $V$ together with $\leq \dim X - 1$
elements in $\gF$, or iv) it is a disjoint union of sets in $\gF$. 

In case ii) $C^\prime \times \{ 1 \}$ will also be the complement of the 
union of elements in $EP\gF$, since $V \times \{ 0 \}$ is in $EP\gF$.
Impossible. In case iii) $EPV$ will be covered by $\dim X + 1$ elements since
$V \times \{ 0 \}$ is in $EP\gF$. Impossible.
So we can conclude that $T^1$ is a disjoint union of sets in $\gF$ or it is empty.

\medskip 
Now suppose  $T^0$ is $V$. If $T^1$ is empty, then since $T$ is not $V \times \{ 0 \}$
(since $T$ is not in $EP\gF$), $T$ must be $V \times \{ 0 \} \cup \{ t \}$. 
But then $EPX$ restricted to the complement of $T$ is a sphere and so not
acyclic. If $T^1$ is a disjoint union of sets in  $\gF$, $EPV$ can be covered 
by $\dim X + 1$ elements according to Proposition \ref{MaxPropT}. Impossible.

Now suppose $T^0$ is empty. If $T^1$ is empty then $T$ must be $\{t\}$. Impossible
since the complement of $T \cup \{V \times \{0\}\}$ will be a sphere and so have homology.
If $T^1$ is a disjoint 
union of two or more sets in $\gF$, then by the following
Lemma \ref{TogS} there 
are sets $S_1, S_2, \ldots, S_{\dim X - 1}$ in $\gF$ such that $T^1$ and these sets cover $V$.
But then $EPV$ will be covered by $\dim X + 1$ subsets of $EP\gF$ if $\dim X \geq 2$. 
Impossible. If $\dim X = 1$, then either $T$ contains $t$ and $EPV$ is covered by
two subsets of $EP\gF$, or $T$ does not contain $t$ and $EPX$ restricted to the complement
of $T$ is disconnected, also impossible. 


\medskip
Hence $T^0$ must be a disjoint union of sets in $\gF$. Proposition \ref{MaxPropT} guarantees that 
$T^0$ together with $d = \dim X$ elements
$S_1, \ldots, S_d$ in $\gF$ covers $X$. If $T^1$ is empty,
the complement of $\cup_1^d
(S_i \times \{ 1 \} \cup \{ t \}) \cup T$ will be disconnected. Impossible.
Therefore both $T^0$ and $T^1$ are disjoint unions of sets in $\gF$.

If $T^0 = T^1$ and $T$ is $T^0 \times \{ 0, 1 \} \cup \{ t \}$, then
according to Proposition \ref{MaxPropT}, we can cover $EPV$ with $\dim X + 1 $ elements.
Impossible.

Suppose $T^0\backslash T^1$ is nonempty. Let $v$ be in the difference set.
By Proposition \ref{MaxPropT} there exists a covering of $V$ consisting of 
$T^0$ and $S_1, \ldots, S_r$ where $T^0$ is the only set containing $v$.
Then the complement of $T \cup \cup_{i=1}^r(S_i \times \{1\} \cup \{t \})$ 
is disconnected. Impossible.

Hence $T^0 \sus T^1$. Now by Proposition \ref{MaxPropT} there exists $d = \dim X$
elements $S_1, \ldots, S_d$ of $\gF$ which together with $T^0$ cover $V$.
If $T$ contains $t$ then $T$ and the $S_i \times \{ 0,1 \}$, a total of $\dim X + 1$
sets, would cover $EPX$. Impossible. Hence $t$ is not in $T$. 
If $T^1 \backslash T^0$ is nonempty. Let $v$ be in the difference set. 
Again by Proposition \ref{MaxPropT} 
there exists a covering of $V$ consisting of $T^1$ and
$S_1, \ldots, S_r$ where $v$ is only in $T^1$. Then the complement
of $T \cup \cup_{i=1}^r (S_i \times \{ 0, 1 \})$ is disconnected. Impossible.

In conclusion $T^0 = T^1$ is a disjoint union of sets in  $\gF$, and  $T$ is $T^0 \times \{ 0,1 \}$.
Thus $T$ is already a disjoint union of sets in  $EP\gF$, and so this family is maximal.
\end{proof}

\begin{lemma} \label{TogS} Let $\gF$ be a maximal family of $CM_*(X)$ where $X$ is a polytope.
Let $T_1$ and $T_2$ be two disjoint sets in $\gF$. Then there are $S_1, S_2, \ldots, 
S_{\dim X -1}$ in $\gF$ such that the union of the two $T$'s and the $S$'s cover the
vertices of $X$. 
\end{lemma}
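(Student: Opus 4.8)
The plan is to exploit Proposition \ref{MaxPropFamKrit}.3 applied to the pair of faces $F \subsetneq G$ where $F = \emptyset$, together with the codimension bound coming from the Auslander--Buchsbaum theorem, in the same spirit as the proof of Proposition \ref{MaxPropT}. First I would set $T = T_1 \cup T_2$, a disjoint union; since each $T_i$ is connected in $X$ by Lemma \ref{MaxLemXbet} but $T$ itself need not be, I cannot directly apply \ref{MaxPropT} to an element of $\gF$. Instead I would argue as follows. By condition 3 of Proposition \ref{MaxPropFamKrit} applied with $F = \emptyset$ and $G$ ranging over the vertices \emph{not} in $T$, for each vertex $v \notin T$ there is an element of $\gF$ containing $v$ but disjoint from $\emptyset$ — this just says the elements of $\gF$ cover $[n]$, which is not yet enough. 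The sharper input is condition 3 applied to each vertex $v \notin T$ paired against the single-vertex face, combined with a connectedness argument: I want a \emph{small} subfamily of $\gF$ whose union, together with $T_1 \cup T_2$, is all of $V$.

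The key step is the codimension argument. Let $\gG$ be the subfamily consisting of $T_1$, $T_2$, and whatever sets we need to throw in so that condition 3 of Proposition \ref{MaxPropFamKrit} is satisfied for $\gG$ relative to the face $\emptyset \subsetneq \{v\}$ for each $v$; concretely, use condition 3 for $\gF$ to pick, for each neighbour configuration, sets of $\gF$ witnessing the inclusions, so that $X$ restricted to the complement of any union of elements of $\gG$ is acyclic (condition 2 is inherited from $\gF$ since any union of elements of $\gG$ is a union of elements of $\gF$, and condition 1 and 3 hold trivially or by the choice). Then the monomial labelling attached to $\gG$ gives a cellular resolution of the corresponding ideal over $X$, so by the Auslander--Buchsbaum theorem the quotient ring has codimension $\leq \dim X + 1$; equivalently, $X$ can be covered by at most $\dim X + 1$ elements of $\gG$. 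I would then show that in any such minimal cover both $T_1$ and $T_2$ must appear: pick a vertex $v_1$ that lies only in $T_1$ among elements of $\gG$ (such a vertex exists by Proposition \ref{MaxPropFamKrit}.3 applied to $\emptyset \subsetneq \{v_1\}$ if we have chosen $\gG$ so that no other set of $\gG$ meets $v_1$ — this requires care), and similarly $v_2$ for $T_2$. Hence the cover is $T_1, T_2, S_1, \ldots, S_{\dim X - 1}$ with $S_j \in \gF$, which is exactly the claim.

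The main obstacle I expect is the bookkeeping in choosing the auxiliary family $\gG$ so that $T_1$ and $T_2$ are both \emph{forced} into every minimal cover. The clean way is to mimic the proof of Proposition \ref{MaxPropT} verbatim: there one chooses, for the single vertex $t \in T$, sets $S_1, \ldots, S_r$ from $\gF$ covering all neighbours of $t$ but not $t$ itself, deduces by Lemma \ref{MaxLemXbet}-style connectedness that the $S_i$ in fact cover $X \setminus \{t\}$, and then runs the Auslander--Buchsbaum bound. Here I would pick $t_1 \in T_1$ and $t_2 \in T_2$ with $t_1 \neq t_2$ (possible since the $T_i$ are disjoint and nonempty), use Proposition \ref{MaxPropFamKrit}.3 to get sets of $\gF$ covering all neighbours of both $t_1$ and $t_2$ but omitting both $t_1$ and $t_2$; by connectedness of the complement (Proposition \ref{MaxPropFamKrit}.2) this complement is contained in $\{t_1, t_2\}$, so these sets cover $V \setminus \{t_1, t_2\}$. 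Adjoining $T_1$ and $T_2$ we get a family $\gG$ satisfying conditions 1--3, so the Auslander--Buchsbaum bound gives a cover of $X$ by $\leq \dim X + 1$ elements of $\gG$; this cover must contain both $T_1$ (the only set meeting $t_1$) and $T_2$ (the only set meeting $t_2$), leaving $\leq \dim X - 1$ sets $S_1, \ldots, S_{\dim X - 1}$ from $\gF$. That is precisely the assertion, so the lemma follows. The one subtlety to check is that after adjoining $T_1, T_2$ the sets of $\gF$ we chose still do not accidentally contain $t_1$ or $t_2$ — but they were selected to avoid exactly those vertices, so this is automatic.
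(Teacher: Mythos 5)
Your final step (adjoin $T_1,T_2$, invoke acyclicity of complements to get a cellular resolution, apply the Auslander--Buchsbaum codimension bound, and force $T_1,T_2$ into the cover because only they meet $t_1,t_2$) is fine and matches the paper's closing argument. But the core of your plan --- choosing sets of $\gF$ ``covering all neighbours of both $t_1$ and $t_2$ but omitting both $t_1$ and $t_2$'' --- is not justified, and this is exactly where the real work lies. Proposition \ref{MaxPropFamKrit}.3 applied to $\{t_1\}\subsetneq G$ only produces a set avoiding $t_1$; nothing prevents it from containing $t_2$, and vice versa. To avoid both vertices simultaneously you would need to apply condition 3 with $F=\{t_1,t_2\}$, which requires $\{t_1,t_2\}$ to be a face of $X$ --- something you never establish for your arbitrarily chosen $t_1\in T_1$, $t_2\in T_2$ --- and even then condition 3 only yields sets meeting each face $G$ properly containing that edge, not sets covering \emph{all} neighbours of $t_1$ and $t_2$. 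So the ``automatic'' subtlety you flag at the end is precisely the unfounded step.

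The paper's proof is built to circumvent this. It first shows, via Alexander duality, that $X$ restricted to $T_1\cup T_2$ is acyclic, hence connected, so there \emph{is} an edge $e$ joining some $t_1\in T_1$ to $t_2\in T_2$. Condition 3 applied to $e\subsetneq G$ for the faces $G$ containing $e$ then gives sets $S$ disjoint from $\{t_1,t_2\}$ meeting each such face; but their union need not cover $V\setminus\{t_1,t_2\}$, and the complement $W$ can be strictly larger than $\{t_1,t_2\}$. The remaining (and essential) argument is topological: $e$ is a maximal face of $X_{|W}$, so $X_{|W}\setminus\{e\}$ must be disconnected (otherwise $X_{|W}$ would have nonvanishing first homology, contradicting acyclicity), and the components $U_1\ni t_1$, $U_2\ni t_2$ satisfy $U_i\subseteq T_i$ because $W\setminus T_i$ is again the complement of a union of sets in $\gF$ and hence connected. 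Only then does one know that $T_1$, $T_2$ and the $S$'s cover $V$, after which your codimension argument applies. Without the edge $e$ and this disconnection argument, your proposed covering family simply may not exist, so the proof as sketched does not go through.
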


\begin{proof}
The restriciton of $X$ to the complement of $T_1 \cup T_2$ is acyclic. By Alexander
duality, \cite[Thm. 3.44]{Ha}, the restriction of $X$ to $T_1 \cup T_2$ is also
acyclic and hence connected. So there is an edge $e$ connecting points $t_1 \in T_1$ and
$t_2 \in T_2$.  For each face containing $e$ there is a set $S$ in $\gF$ 
disjoint from $\{t_1, t_2\}$ which includes a vertex of the face. 
The restriction of $X$ to the 
complement $W$ of the union of the $S$'s will be acyclic. In $X_{|W}$ the edge $e$ is
a maximal face. Hence $X_{|W} \backslash \{e\}$ is disconnected, since otherwise
$X_{|W}$ would have nonvanishing $\tH^1$-cohomology. Let $U_i$ be the vertices in the 
connected component of $t_i$ in $X_{|W}\backslash \{e\}$. Then $T_1 \supseteq U_1$ since
otherwise $X$ restricted to $W\backslash T_1$ would be disconnected and this cannot be
so since $W\backslash T_1$ is the complement of a union of sets in $\gF$. Similarly
$T_2 \supseteq U_2$, and so the two $T$'s and the $S$'s cover the vertices of $X$. As
in Proposition 1.13 we may conclude that there are $\dim X - 1$ of the $S$'s that
together with the two $T$'s cover the vertices. 
\end{proof}

\eks The Stanley-Reisner ring of an octahedron with a stellar 
subdivision of one face has cellular resolution given by the elongated
pyramid over a triangle.

\medskip
\elongpyramid
\medskip

\eksfin

\subsection{ CM labellings of a three-dimensional polytope}

Another family of selfdual polytopes of dimension three has plane
diagrams given as follows. Given a $2n$-gon labelled modulo $2n$ by
vertices $0,1, \ldots, 2n-1$. Add a vertex $c$ at the centre and edges
from the centre to each oddly labelled vertex.  Also add edges from
$2i$ to $2i+2$ on the outside. This is the planar graph corresponding
to a selfdual polytope with $2n+1$ vertices. When $n=4$ this may be
displayed as follows.

\medskip
\selvdualpol
\medskip

This has a maximal Cohen-Macaulay labelling given as follows.
\begin{proposition} Given the $3$-polytope $P$ above with $n=4$.
The family of subsets $\{0,1,2\} , \{2,3,4\}, \{4,5,6\}, \{ 6,7,0\}$,
$\{c,1,3\}, \{ c,3,5\} , \{ c,5,7 \} , \{ c,7,1\}$, and
$\{1,2,3\}, \{ 3,4,5\}$
is a maximal family in $\CM_*(X)$.
\end{proposition}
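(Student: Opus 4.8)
The plan is to run the machinery of Section~1: first verify that the displayed family $\gF$ of ten $3$-element subsets of $V=\{0,1,\dots,7,c\}$ satisfies the three conditions of Proposition~\ref{MaxPropFamKrit}, so that it is an object of $\CM_*(X)$ of codimension $\dim X+1=4$, and then verify it is maximal in the refinement order among such families --- which, by the analogue for $\CM_*(X)$ of Proposition~\ref{MaxProMax}(b), is all that remains, since $\gF$ is reduced for the trivial reason that all its members have three elements and hence none is a disjoint union of two or more others. Throughout I would use the reflection symmetry $i\mapsto 6-i\pmod 8$, $c\mapsto c$, which is a symmetry of $X$ carrying $\gF$ to itself (it swaps $\{0,1,2\}\leftrightarrow\{4,5,6\}$, $\{c,1,3\}\leftrightarrow\{c,3,5\}$, $\{c,5,7\}\leftrightarrow\{c,7,1\}$, $\{1,2,3\}\leftrightarrow\{3,4,5\}$ and fixes $\{2,3,4\}$ and $\{6,7,0\}$), to halve the case analyses.

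The conditions of Proposition~\ref{MaxPropFamKrit} are checked as follows. Condition~1 asks that no three members of $\gF$ cover $V$; three $3$-element sets cover nine vertices only if they partition $V$, and since $c$ lies only in the four triples $\{c,1,3\},\{c,3,5\},\{c,5,7\},\{c,7,1\}$, while those four never complete a partition of $V$ together with one of the remaining sets, this is a short finite check. Condition~3 unwinds to: for each strictly nested pair $F\susneq G$ of faces of $X$ --- the vertices, the sixteen edges, the four outer triangles, the four inner quadrilaterals, the outer face $\{0,2,4,6\}$, and the $3$-cell --- some $S\in\gF$ meets $G$ but not $F$; the only point that recurs is that for each ordered edge $(v,w)$ some $S\in\gF$ contains $w$ but not $v$, which one reads off. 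Condition~2, that $X|_{V\setminus W}$ be acyclic for every union $W$ of members of $\gF$, is the computational core; since many such $W$ coincide, this reduces to finitely many complements $U=V\setminus W$, and for each I would exhibit $X|_U$ as void, a point, or collapsible --- restricting $X$ to $V\setminus\{v\}$ for a single vertex $v$ gives a closed disc (the boundary sphere with the open star of $v$ removed), and the remaining restrictions visibly retract onto a vertex or are cones. One also checks that $X|_S$ is connected for each $S\in\gF$ (each $S$ spans a triangle or a path), so $\gF$ in fact lies in $\CM_\dagger(X)$, as it must for a maximal object.

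For maximality one must show that the only reduced family $\gG$ corresponding to an object of $\CM_*(X)$ and lying above $\gF$ in the refinement order --- i.e.\ with every member of $\gF$ a disjoint union of members of $\gG$ --- is $\gF$ itself; equivalently, that no member of $\gF$ is properly refined in such a $\gG$ and that $\gG$ has no further members. The useful observations are: conditions~1 and~3 are automatically inherited under pure refinement, so a genuine refinement can only be obstructed by condition~2; and the competitor is required to have codimension exactly $4$, so some four of its sets still cover $V$, which forbids refining too finely. In each case the task is then to produce a union $W$ of members of $\gG$ with $X|_{V\setminus W}$ non-acyclic. If $\gG$ makes available a proper subset $S'$ of a member of $\gF$ --- say the singleton $\{v\}$ --- one uses a union $W_0$ of members of $\gF$ for which $v$ is a cut vertex of the (acyclic) complex $X|_{V\setminus W_0}$: then $W=W_0\cup S'$ is a union of members of $\gG$ whose complement is disconnected. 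For example, with $v=0$ one takes $W_0=\{c,1,3\}\cup\{c,5,7\}\cup\{3,4,5\}=\{1,3,4,5,7,c\}$, so that $X|_{\{0,2,6\}}$ is a path with $0$ at its centre; then $W=\{0,1,3,4,5,7,c\}$ and $X|_{\{2,6\}}$ is a pair of isolated points. Adjoining a brand-new set $T$ to $\gF$ either already violates condition~1 (e.g.\ $T=\{3,7,c\}$, since $\{0,1,2\}\cup\{4,5,6\}\cup T=V$) or likewise admits such a carving.

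The main obstacle is exactly this last step: turning these observations into a clean enumeration of the proper subsets of the ten members (each with its cut-vertex witness $W_0$) and of the admissible new sets, rather than a brute-force search over all competitors. The cascades triggered by refining a single member, interacting with the codimension constraint, are what make this bookkeeping the genuine content of the proof; the reflection symmetry and the requirement that the codimension stay $4$ lighten it but do not remove it.
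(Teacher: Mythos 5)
Your proposal follows essentially the same route as the paper's own proof: verify the three criteria of Proposition \ref{MaxPropFamKrit} for the given family and then check that no refinement of a member and no additional subset can be admitted, both of which the paper itself dispatches as ``tedious/laborious but straightforward.'' The details you do spell out (the reflection symmetry $i\mapsto 6-i$, the condition-1 partition check, inheritance of conditions 1 and 3 under refinement, and the cut-vertex witnesses forcing condition-2 failures) are correct and in fact more explicit than the published argument, with the remaining case enumeration being exactly the bookkeeping the paper also leaves to the reader.
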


\begin{proof} It is a tedious but straightforward task to show that this
family fulfils the criteria of Proposition \ref{MaxPropFamKrit}.
To show that it is maximal we must also show that this family cannot be
extended, i.e. we cannot add another subset $S$ of vertices, or refine a subset, and still
have all the criteria 1, 2, and 3 of Proposition \ref{MaxPropFamKrit}. 
This is laborious but straightforward.
\end{proof}

Now this $3$-polytope $P$ gives a cellular resolution of the Stanley-Reisner
ideal of various simplicial polytopes where the number of vertices is 
four more than the dimension. We give examples of such simplicial 
polytopes of dimension two and three. 

\eks The hexagon has cellular resolution given by $P$ when labelled
as follows. (A label $ij$ denotes $x_ix_j$.)

\medskip
\firkanthex
\medskip

\eksfin

\eks Consider the bipyramid over the triangle. 
\medskip
\bipullA
\medskip

We can take stellar subdivisions of various pairs of faces. The cellular
resolution of its Stanley-Reisner ring 
is then given by various labellings of $P$. With stellar
subdivision of faces $124$ and $235$ we have Figure 4.6.
With stellar subdivision of faces $124$ and $234$ we have Figure 4.7,
and with stellar subdivision of faces $124$ and $125$ we have Figure 4.8.

\medskip
\bipullB
\medskip

\end{document}